\documentclass[12pt]{article}

\usepackage[margin=1in]{geometry}

\usepackage{amsmath, amssymb, amsthm, amssymb}
\usepackage{amsfonts}
\usepackage{graphicx}
\usepackage[frame,ps,matrix,arrow,curve,rotate]{xy}
\usepackage{enumerate}
\usepackage{hyperref}
\usepackage{siunitx}
\usepackage{enumitem}
\usepackage{xspace}

\usepackage{blkarray}



\newtheorem{theorem}{Theorem}[section]

\newtheorem{corollary}[theorem]{Corollary}

\newcounter{claims}[theorem]
\newtheorem{claim}[claims]{Claim}

\theoremstyle{definition}
\newtheorem{definition}[theorem]{Definition}

\theoremstyle{remark}

\newcommand{\mc}[1]{\mathcal{#1}}

\newcommand{\la}{\langle}
\newcommand{\ra}{\rangle}

\DeclareMathOperator{\dom}{dom}



\newcommand{\abs}[1]{\lvert#1\rvert}
\newcommand{\innerprod}[1]{\langle#1\rangle}
\newcommand{\pset}[1]{\mathcal{P}(#1)}
\newcommand{\set}[1]{\{#1\}}

\newcommand{\sym}{\text{sym}}
\newcommand{\fix}{\text{fix}}

\renewcommand{\ge}{\geqslant}
\renewcommand{\geq}{\geqslant}

\renewcommand{\leq}{\leqslant}
\renewcommand{\emptyset}{\varnothing}

\newcommand{\liff}{\leftrightarrow}
\newcommand{\limplies}{\rightarrow}

\newcommand{\xor}{\oplus}
\newcommand{\es}{\emptyset}

\newcommand{\cge}{\succeq}
\newcommand{\cle}{\preceq}

\newcommand{\DedFinLogic}{$\mathsf{DedCardComp}$\xspace}
\newcommand{\CardCompLogic}{$\mathsf{CardComp}$\xspace}
\newcommand{\FinLogic}{$\mathsf{FinCardComp}$\xspace}

\newcommand{\ZF}{\mathsf{ZF}}
\newcommand{\ZFC}{\mathsf{ZFC}}
\newcommand{\ZFA}{\mathsf{ZFA}}
\newcommand{\AC}{\mathsf{AC}}

\begin{document}

\title{The Logic of Cardinality Comparison Without the Axiom of Choice}
\author{Matthew Harrison-Trainor\thanks{The first author was partially supported by NSF grant \mbox{DMS-2153823}.} and Dhruv Kulshreshtha\thanks{The second author was supported by the Math Chair's Discretionary Funds at the University of Michigan Mathematics REU Program.}}

\maketitle

\begin{abstract}
	We work in the setting of Zermelo-Fraenkel set theory without assuming the Axiom of Choice. We consider sets with the Boolean operations together with the additional structure of comparing cardinality (in the Cantorian sense of injections). What principles does one need to add to the laws of Boolean algebra to reason not only about intersection, union, and complementation of sets, but also about the relative size of sets? We give a complete axiomatization.
	
	A particularly interesting case is when one restricts to the Dedekind-finite sets. In this case, one needs exactly the same principles as for reasoning about imprecise probability comparisons, the central principle being Generalized Finite Cancellation (which includes, as a special case, division-by-$m$). In the general case, the central principle is a restricted version of Generalized Finite Cancellation within Archimedean classes which we call Covered Generalized Finite Cancellation.
\end{abstract}

\section{Introduction}

Assume throughout that $\ZF$ is consistent. Under the Axiom of Choice ($\AC$), cardinal arithmetic is highly structured. Every set is in bijection with a cardinal, and these are totally ordered. Thus for any two sets $A$ and $B$, either $|A| \geq |B|$ or $|B| \geq |A|$. Without the Axiom of Choice, this is no longer the case. Indeed, Jech \cite{Jech66} and Takahashi \cite{Takahashi67} showed that the situation is as far from this as possible.

 \begin{theorem}[Jech \cite{Jech66} and Takahashi \cite{Takahashi67}; see Theorem 11.1 of \cite{Jech73}]\label{thm:Jech}
	Let $(\mc{P},\preceq)$ be a partial preorder. Then there is a model $\mc{U}$ of $\ZF$, and sets $(A_p)_{p \in \mc{P}}$ in $\mc{U}$, such that
	\[ p \preceq q \Longleftrightarrow |A_p| \leq |A_q|.\]
\end{theorem}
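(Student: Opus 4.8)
The plan is to realize $(\mathcal P,\preceq)$ inside a symmetric forcing extension. Since ZF is consistent, so is ZFC (pass to $L$), so we may start from a ground model $V\models$ ZFC containing $\mathcal P$. Let $\mathbb P$ be the finite‑partial‑function poset adding, for each $(r,n)\in\mathcal P\times\omega$, a Cohen real $c^r_n\subseteq\omega$; mutually generic Cohen reals are pairwise distinct, so the sets $B_r:=\{c^r_n:n\in\omega\}$ are pairwise disjoint and infinite. Writing $\downarrow p:=\{r\in\mathcal P:r\preceq p\}$, put
\[ A_p\ :=\ \bigcup_{r\in\,\downarrow p}B_r\ =\ \{\,c^r_n:\ r\preceq p,\ n\in\omega\,\}. \]
Let $\mathcal G=\prod_{r\in\mathcal P}\mathrm{Sym}(\omega)$ act on conditions and names by letting $\sigma=(\sigma_r)_r$ send $c^r_n\mapsto c^r_{\sigma_r(n)}$, i.e.\ permuting the second coordinate within each fibre $B_r$, and let $\mathcal F$ be the normal filter generated by the pointwise stabilizers $\fix(\bar e):=\{\sigma:\sigma_r(n)=n\text{ for all }(r,n)\in\bar e\}$ over finite $\bar e\subseteq\mathcal P\times\omega$. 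Let $\mathcal U$ be the associated symmetric extension. By the standard theory of symmetric extensions $\mathcal U\models$ ZF; each $c^r_n$ has a symmetric name with support $\{(r,n)\}$, and the canonical name for $A_p$ is fixed by all of $\mathcal G$ (reindexing each fibre), so $\mathcal U$ contains every $A_p$, and indeed the whole sequence $(A_p)_{p\in\mathcal P}$. The easy half of the equivalence is immediate: if $p\preceq q$ then $\downarrow p\subseteq\downarrow q$ by transitivity, hence $A_p\subseteq A_q$, and the inclusion map $\{(x,x):x\in A_p\}$ is definable from $A_p,A_q\in\mathcal U$, so it lies in $\mathcal U$ and witnesses $|A_p|\le|A_q|$.

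For the converse, suppose $p\not\preceq q$; we must show there is no injection $A_p\to A_q$ in $\mathcal U$. The crucial structural fact is that $p\in\downarrow p\setminus\downarrow q$, so $B_p\subseteq A_p$ while $B_p\cap A_q=\varnothing$; consequently any $\sigma\in\mathcal G$ that moves only points of $B_p$ fixes $A_q$ pointwise. Assume for contradiction that $f\in\mathcal U$ injects $A_p$ into $A_q$; fix an $\mathcal F$‑hereditarily‑symmetric name $\dot f$ with $\fix(\bar e)\le\sym(\dot f)$ for some finite $\bar e$, and a condition $p_0$ forcing ``$\dot f$ is an injection from $\dot A_p$ to $\dot A_q$''. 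Pick $n$ with $(p,n)\notin\bar e$; since $p\preceq p$, the name $\dot c^p_n$ already occurs in $\dot A_p$, so by a density argument there are $p_1\le p_0$ and some $s\preceq q$, $m\in\omega$ with $p_1\Vdash\dot f(\dot c^p_n)=\dot c^s_m$, and necessarily $s\neq p$ because $p\not\preceq q$. Now choose $n'\neq n$ with $(p,n')\notin\bar e$ and with $(p,n')$ not among the finitely many indices mentioned by $p_1$, and let $\sigma\in\mathcal G$ transpose $(p,n)$ and $(p,n')$ (identity elsewhere). Then $\sigma\in\fix(\bar e)\le\sym(\dot f)$, so $\sigma\dot f=\dot f$; moreover $\sigma$ fixes $\dot A_p,\dot A_q$ and (since $s\ne p$) fixes $\dot c^s_m$, while sending $\dot c^p_n\mapsto\dot c^p_{n'}$. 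By the Symmetry Lemma, $\sigma p_1\Vdash\dot f(\dot c^p_{n'})=\dot c^s_m$ and ``$\dot f$ is injective''. Finally $p_1$ and $\sigma p_1$ are compatible: as finite partial functions they differ only in that the data $p_1$ places on the $(p,n)$‑coordinates are placed by $\sigma p_1$ on the (untouched‑by‑$p_1$) $(p,n')$‑coordinates, and vice versa. A common extension then forces $\dot f(\dot c^p_n)=\dot c^s_m=\dot f(\dot c^p_{n'})$ together with $\dot c^p_n\neq\dot c^p_{n'}$ and injectivity of $\dot f$ — a contradiction. Hence $|A_p|\not\le|A_q|$, and the equivalence $p\preceq q\Leftrightarrow|A_p|\le|A_q|$ holds in $\mathcal U$.

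The main obstacle is entirely contained in the last step: turning the intuitive ``slide the generic point $c^p_n$ onto a fresh point $c^p_{n'}$'' argument into a proof requires the three standard but fiddly pillars of symmetric forcing — that every member of $\mathcal U$ has an $\mathcal F$‑hereditarily‑symmetric name with a finite support, the Symmetry Lemma relating $p\Vdash\varphi(\dot a)$ to $\sigma p\Vdash\varphi(\sigma\dot a)$, and the homogeneity of Cohen forcing guaranteeing $p_1\parallel\sigma p_1$ — whereas the combinatorics of $\mathcal P$ enter only through the two clean facts $\downarrow p\subseteq\downarrow q\Leftrightarrow p\preceq q$ and $B_p\cap A_q=\varnothing\Leftrightarrow p\not\preceq q$. (Equivalently, one can run the same construction as a Fraenkel–Mostowski permutation model on the atoms $\{c^r_n\}$ and transfer to a genuine model of ZF via the Jech–Sochor embedding theorem. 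Note also that when $p\preceq q\preceq p$ one simply gets $A_p=A_q$, harmlessly forcing $|A_p|=|A_q|$, which is in any case what Cantor–Schr\"oder–Bernstein — valid in ZF — would demand.)
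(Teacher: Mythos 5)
Your proof is correct. Note that the paper does not prove this theorem at all: it is quoted as a known result of Jech and Takahashi, whose proof (Theorem 11.1 of Jech's book, which the paper explicitly follows in Section \ref{sec:prob-to-set}) is a Fraenkel--Mostowski permutation model on urelements indexed by $\mc{P}\times\omega$, with the group preserving each fibre, finite supports, and a final transfer to ZF via the Jech--Sochor embedding theorem. You realize the identical combinatorics --- disjoint fibres $B_r$, downward-closed unions $A_p$, finite supports, and a two-point swap inside one fibre that fixes the claimed image --- but directly as a symmetric extension over Cohen forcing rather than as a ZFA permutation model. Your route buys a self-contained ZF model with no detour through urelements and no appeal to Jech--Sochor (you correctly note the two presentations are interchangeable); the paper's route buys reusability, since the urelement form of the construction is exactly what the authors need again in Theorem \ref{thm:prob-to-urelement}, where they modify Jech's model and apply Jech--Sochor only once at the end. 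The one place where your argument leans on something the permutation-model version gets for free is the compatibility of $p_1$ and $\sigma p_1$; your choice of $n'$ outside both the support $\bar e$ and the coordinates mentioned by $p_1$ handles this correctly, and the remaining steps (reflexivity of $\preceq$ putting $c^p_n$ in the domain, $s\neq p$ from $p\not\preceq q$, and genericity forcing $c^p_n\neq c^p_{n'}$) are all in order.
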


\noindent One can think of this result as giving a complete order-theoretic characterization of cardinality comparison in $\ZF$. In this paper, we will expand this to also consider the Boolean operations of intersection, union, and complement, that is, to completely characterize additive cardinal arithmetic in $\ZF$. From this perspective, our main result is as follows.

\begin{theorem}
	Let $B$ be a finite Boolean algebra with $\top$ as the top element and $\bot$ the bottom element, and let $\cge$ be a binary relation on $B$. The following are equivalent:
	\begin{enumerate}
		\item There is a model $\mc{U}$ of $\ZF$ set theory, a non-empty set $X$ in $\mc{U}$, and a field of sets $\mc{F} = \{A_p \subseteq X : p \in B\}$ over $X$ representing $B$ and such that for $p,q \in B$,
		\[ p \preceq q \Longleftrightarrow |A_p| \leq |A_q|.\]
        Here, a field of sets on $X$ is a non-empty collection of subsets of $X$ closed under finite unions, intersections, and relative complements.
		\item $(B,\preceq)$ satisfies the following conditions:
		\begin{itemize}
			\item not $\bot \cge \top$;
			\item for all $b \in B$, $b \cge \bot$;
			\item $\cge$ is reflexive;
			\item $\cge$ is transitive;\footnote{In fact, transitivity follows as a special case of the next condition.}
			\item for any two sequences of elements $a_1, a_2, \dots , a_k,\underbrace{e,\ldots,e}_{l}$ and $b_1, b_2, \dots, b_k,\underbrace{f,\ldots,f}_{l}$ from $B$ of equal length, if
			\begin{enumerate}
			    \item every atom of $B$ is below (in the
			order of the Boolean algebra) exactly as many $a$'s and $e$'s as $b$'s and $f$'s (counting multiplicity among the $e$'s and $f$'s),
			    \item $a_i
			\cge b_i$ for all $i \in\{1, \dots , k\}$, and
			    \item for each $i$, $a_i$ is contained in the smallest ideal containing $f$ and closed downwards under $\cle$.
			\end{enumerate}
			then $f \cge e$.
		\end{itemize}
	\end{enumerate}
\end{theorem}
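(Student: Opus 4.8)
The plan is to prove the two implications separately. Direction $(1)\Rightarrow(2)$ is soundness: working inside an arbitrary model of ZF, one checks that the cardinality comparison relation on a field of sets obeys all of the listed principles. Direction $(2)\Rightarrow(1)$ is completeness: given a finite Boolean algebra $B$ with a relation $\preceq$ satisfying (2), one must construct a model of ZF together with a field of sets realizing it, and this is where the bulk of the work lies. The content here beyond Jech's Theorem~\ref{thm:Jech}, which already realizes an arbitrary partial preorder, is to make the cardinalities respect the Boolean operations, that is, to force $|A_p|$ to behave like a sum $\sum_{\alpha\le p}|X_\alpha|$ over the atoms $\alpha$ of $B$.

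For soundness, the first four conditions are routine: ``not $\bot\cge\top$'' holds because any field of sets representing $B$ has $A_\bot=\varnothing$ strictly inside $A_\top=X$; ``$b\cge\bot$'' holds via the empty function; reflexivity and transitivity hold via the identity injection and composition of injections. The substance is Covered Generalized Finite Cancellation. Writing $X_\alpha\subseteq X$ for the block corresponding to an atom $\alpha$ of $B$, so that $A_p=\bigsqcup_{\alpha\le p}X_\alpha$, the first hypothesis of that clause says exactly that, block by block over the $X_\alpha$, the two finite families ($a_1,\dots,a_k$ with $l$ copies of $e$, against $b_1,\dots,b_k$ with $l$ copies of $f$) give the same multiset, so there is a block-respecting bijection between the corresponding disjoint unions; the second hypothesis provides injections $A_{b_i}\hookrightarrow A_{a_i}$; and the third (``coveredness'') hypothesis, by induction along the generation of the ideal using $|A_{x\vee y}|\le|A_x|+|A_y|$, monotonicity of cardinality, and the meaning of $\preceq$, yields a single $m\in\omega$ such that $A_{a_i}$ injects into the disjoint union of $m$ copies of $A_f$ for every $i$. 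From these one must extract an injection $A_e\hookrightarrow A_f$ by a cancellation argument: composing the block-respecting bijection with the injection $\bigsqcup_i A_{b_i}\hookrightarrow\bigsqcup_i A_{a_i}$ obtained from the second hypothesis, analyzing the orbit structure of the resulting injection, using the finite bound from coveredness to neutralize the part of $\bigsqcup_i A_{a_i}$ not matched by the $A_{b_i}$, and invoking Cantor--Schr\"oder--Bernstein together with Bernstein's division theorem to strip off the leftover copies. The delicate point, and the reason all three hypotheses are genuinely needed, is that without the Axiom of Choice the bare cancellation $\kappa+\mu=\kappa+\nu\Rightarrow\mu\le\nu$ fails, so the argument must track, atom by atom, exactly which blocks occur on each side and lean on the uniform finite estimate from coveredness to make the cancellation legitimate; this cancellation lemma is the technical core of the soundness direction.

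For completeness I would proceed in two stages. Stage~1 is combinatorial: show that the conditions in (2) are equivalent to $(B,\preceq)$ admitting a suitable ``representation'', namely a family of additive functionals, stratified by Archimedean classes, that is compatible with and tight for $\preceq$. In the Dedekind-finite case this is a Scott / Kraft--Pratt--Seidenberg-style theorem: Generalized Finite Cancellation holds iff $\preceq$ is the intersection of the orders induced by a set of finitely additive measures, exactly as in the theory of imprecise probability comparisons. In the general case one first stratifies $B$ into Archimedean classes --- a class strictly above another has to absorb it, reflecting Dedekind-infiniteness --- and applies the Dedekind-finite analysis within the ideal generated by each class, which is precisely what restricting the cancellation condition to elements of that ideal (``Covered'' GFC) encodes. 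Stage~2 is the model construction: from such a representation, build a symmetric model of ZF --- a symmetric submodel of a forcing extension, or a Fraenkel--Mostowski model transferred via the Jech--Sochor theorem --- in which each atom $\alpha$ of $B$ is realized by a set $X_\alpha$ of generic points organized into blocks dictated by its Archimedean class and the chosen functionals, and in which the automorphism group is calibrated so that (i) every injection forced by $\preceq$, together with the Boolean manipulations and cancellations licensed by (2), genuinely exists, and (ii) no other injection between the sets $A_p=\bigsqcup_{\alpha\le p}X_\alpha$ exists in the model. With $A_p$ so defined, $p\preceq q\iff|A_p|\le|A_q|$ then holds by construction.

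The step I expect to be the main obstacle is part~(ii) of Stage~2: ruling out all unwanted injections. Any injection $A_p\to A_q$ in a symmetric model is determined by a finite support, and one has to show --- by analyzing the orbits of that support under the automorphism group and matching the resulting count against the combinatorial content of condition (2) --- that a finitely supported injection $A_p\hookrightarrow A_q$ can exist only when $p\preceq q$, while simultaneously the group and block structure must be chosen generously enough that every injection (2) \emph{does} license is present. Arranging the construction so that this counting argument lines up with the Covered GFC condition on the nose is the crux. A secondary difficulty is Stage~1: proving the representation theorem in a form that respects the Archimedean stratification, which amounts to a separation (Hahn--Banach-type) argument carried out level by level through the Archimedean classes.
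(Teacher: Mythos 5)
Your proposal follows essentially the same route as the paper: soundness by proving Covered Generalized Finite Cancellation directly as a ZF theorem via an orbit/cancellation argument using the covering injection and division-by-$m$, and completeness by (i) a Kraft--Pratt--Seidenberg-style representation of $(B,\cge)$ by a family of finitely additive $\mathbb{N}\cup\{\infty\}$-valued measures obtained by working inside the ideals generated by each element under $\cle$ (your ``Archimedean classes'') and extending by $\infty$ outside, followed by (ii) a Fraenkel--Mostowski permutation model with one amorphous block per measure, transferred to a pure model of ZF by the Jech--Sochor embedding theorem, with the finite-support analysis ruling out unwanted injections exactly as you anticipate. The plan is correct and matches the paper's architecture.
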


\noindent The last condition here is a variation on the \textit{Generalized Finite Cancellation} principle which comes from the study of imprecise probability. We will explain this condition and its connections in more detail later.

\medskip

Another way of viewing this work, from a logical perspective, is as providing a complete axiomatization for reasoning about the relative sizes of sets in $\ZF$ in a formal set-theoretic language. Just as the laws for reasoning about intersection, union, and complementation of sets are captured by the laws of Boolean algebra, what are the laws one must add to the laws of Boolean algebra to capture reasoning about the relative sizes of sets?

Ding, Harrison-Trainor, and Holliday \cite{DHTH20} determined the principles required for reasoning about relative cardinality in the presence of the Axiom of Choice. There, the key difficulty was to reconcile the principles for reasoning about finite sets with the principles for reasoning about infinite sets. In essence, Cantorian reasoning about the relative sizes of \textit{finite} size is the same as \textit{probabilistic} reasoning about the relative likelihoods of events, while Cantorian reasoning about the relative sizes of \textit{infinite} sets is the same as what is called \textit{possibilistic} reasoning \cite{Dubois1988} about the relative likelihoods of events. Each type of likelihood reasoning had previously been axiomatized by itself \cite{Segerberg1971, Gardenfors1975, Burgess2010, Cerro1991}; the chief difficulty in \cite{DHTH20} was to combine them.

In this paper, we determine the principles required for reasoning about relative cardinality without assuming the Axiom of Choice. The difficulties here are of a quite different nature. Rather than reconciling the finite and the infinite, the difficulty now lies in finding a complete set of principles for reasoning about the infinite. With the Axiom of Choice, cardinal arithmetic with infinite sets is almost trivial: for example, sets are totally ordered by cardinality and $|X \cup Y| = \max(|X|,|Y|)$. On the other hand, without the Axiom of Choice, there are sets of incomparable cardinality, and it is possible to have $|X \cup Y| > |X|,|Y|$. Nevertheless, cardinal arithmetic is not totally wild and there is a still some structure, such as division-by-$m$: If $|m \times X| \leq |m \times Y|$, then $|X| \leq |Y|$, for any $m \in \mathbb{N}_{>0}$. Thus the problem is to find a set of principles, including division-by-$m$, that completely characterise additive cardinal arithmetic.

Formally, we work with a language (see Definition \ref{def:language}) that allows us to build terms using the standard set-theoretic operations of union, intersection, and complement, and to express that a set $s$ is at least as big as a set $t$: $|s|\geq |t|$. Thus, we work with a comparative notion of size, prior to the reification of sizes as cardinal numbers. The semantics is given by the Cantorian definition: $|s|\geq |t|$ is true if and only if there is an injection from $t$ into $s$. Then we prove soundness and completeness (Theorems \ref{thm:soundness}, \ref{thm:soundness-inf}, \ref{thm:completeness}, and \ref{thm:completeness-inf}) results for our logic (Definitions \ref{dedfinlogic} and \ref{cardcomplogic}) in this language.

\medskip

Many of the axioms and rules for this logic are exactly what one would expect, e.g., that union, intersection, and complements obey the laws of Boolean algebra, cardinality comparison is transitive, etc. There is one key axiom, which we will now describe, which plays the most important role. Under the Axiom of Choice, one divides the universe into finite sets and infinite sets. Without the Axiom of Choice, there are two different definitions of finite which are no longer equivalent. First, we say that a set is \textit{finite} if it is in bijection with some natural number. Second, we say that a set is \textit{Dedekind-finite} if it is strictly larger in size than any of its proper subsets. Every finite set is Dedekind-finite, but not necessarily vice versa.

Suppose first that we restrict our attention to the simpler case of only Dedekind-finite sets. What are the principles for reasoning about relative sizes of Dedekind-finite sets? Recall that reasoning about relative sizes of \textit{finite} sets is the same as \textit{probabilistic} reasoning about the relative likelihoods of events; this is in a sense where one is absolutely certain, given two events, of which is more likely than the other. We show that reasoning about relative sizes of \textit{Dedekind-finite} sets is the same as \textit{imprecise probabilistic} reasoning about the relative likelihood of events, i.e., reasoning in a setting where there can be uncertainty about the relative likelihood of two events. (For the reader unfamiliar with imprecise probability, we provide more detail in Section \ref{sec:prop-measures-models}.) The central principle, Generalized Finite Cancellation, was isolated in the context of imprecise probability by Rios Insua \cite{Insua92} and Alon and Lehrer \cite{AL14}. We show that the same principle is true of cardinality comparison of Dedekind-finite sets. In terms of cardinality, it says the following.

\begin{definition}\label{def:balanced} We say that two sequences of sets $\innerprod{E_1,\ldots,E_k}$ and $\innerprod{F_1,\ldots,F_k}$ are \textit{balanced}, and write $\innerprod{E_1,\ldots,E_k} =_0 \innerprod{F_1,\ldots,F_k}$, if and only if for all $s$, the cardinality of $\set{i \ | \ s\in E_i}$ is equal to the cardinality of $\set{i \ | \ s\in F_i}$; that is, if every $s$ appears the same number of times on the left side as on the right side.
\end{definition}

\begin{theorem}[Generalized Finite Cancellation]\label{thm:GFC}
	Dedekind-finite sets satisfy Generalized Finite Cancellation: Suppose that
	\[\innerprod{A_1,\ldots,A_k,\underbrace{E,\ldots,E}_{\ell}} =_0 \innerprod{B_1,\ldots,B_k,\underbrace{F,\ldots,F}_{\ell}},\]
	that $B_1,\ldots,B_k$ are Dedekind-finite, and that $\abs{A_i} \geq \abs{B_i}$ for each $i$. Then $\abs{E}\leq\abs{F}$.
\end{theorem}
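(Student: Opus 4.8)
The plan is to recast the hypotheses as a directed graph on ``slots'' in which the desired injection falls out of the connected-component structure, with Dedekind-finiteness entering to control the shape of the components, and then to finish by dividing by $\ell$. Put $n=k+\ell$ and set $\mathcal C_i=A_i,\ \mathcal D_i=B_i$ for $i\le k$ and $\mathcal C_{k+j}=E,\ \mathcal D_{k+j}=F$ for $1\le j\le\ell$, so that $\innerprod{\mathcal C_1,\dots,\mathcal C_n}=_0\innerprod{\mathcal D_1,\dots,\mathcal D_n}$ and, for each $i\le k$, there is an injection $g_i\colon\mathcal D_i\to\mathcal C_i$ witnessing $\abs{\mathcal C_i}\ge\abs{\mathcal D_i}$. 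I would form the digraph whose vertices are an $\alpha$-slot $\alpha(s,i)$ for every $i\le n$ and $s\in\mathcal C_i$, and a $\beta$-slot $\beta(s,i)$ for every $i\le n$ and $s\in\mathcal D_i$. Balancedness says exactly that for each $s$ the finite sets $I_{\mathcal C}(s)=\set{i\le n:s\in\mathcal C_i}$ and $I_{\mathcal D}(s)=\set{i\le n:s\in\mathcal D_i}$ have the same cardinality; let $\mu_s\colon I_{\mathcal C}(s)\to I_{\mathcal D}(s)$ be the \emph{order-preserving} bijection between these two subsets of $\set{1,\dots,n}$ — taking it order-preserving makes the choice canonical, so the $\mu_s$ are defined uniformly in $s$ without any appeal to choice. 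The edges are: a \emph{matching edge} $\alpha(s,i)\to\beta(s,\mu_s(i))$ for each $s$ and each $i\in I_{\mathcal C}(s)$; and a \emph{transport edge} $\beta(s,i)\to\alpha(g_i(s),i)$ for each $i\le k$ and each $s\in\mathcal D_i$.

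Because each $g_i$ is injective and each $\mu_s$ is a bijection, every vertex of the digraph has in-degree $\le 1$ and out-degree $\le 1$, so every connected component is a simple path (finite, singly infinite, or doubly infinite) or a finite cycle. A routine check identifies the in-degree-$0$ vertices as exactly the $\alpha(s,k+j)$ with $s\in E$ — the \emph{$E$-sources} — together with the $\alpha(s,i)$ for $i\le k$ and $s\in A_i\setminus\ran(g_i)$, and the out-degree-$0$ vertices as exactly the $\beta(t,k+j)$ with $t\in F$, the \emph{sinks}. The crucial claim is that there are no infinite paths. Along any path, matching edges and transport edges alternate, so at least every other vertex of a path is a $\beta$-slot; moreover any slot with second coordinate $>k$ is a source or a sink and hence appears only at an endpoint of its path. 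So an infinite path would contain infinitely many $\beta$-slots with second coordinate $\le k$, and $\beta(s,i)\mapsto(s,i)$ embeds these into $\bigsqcup_{i\le k}B_i$, which is Dedekind-finite since a finite disjoint union of Dedekind-finite sets is Dedekind-finite — a contradiction. Hence every component is a finite path from a source to a sink, or a finite cycle, and sending each source to the sink at the opposite end of its path defines a bijection $\Phi$ from the set of all sources onto the set of all sinks.

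Restricting $\Phi$ to the $E$-sources then gives an injection of $\set{\alpha(s,k+j):s\in E,\ 1\le j\le\ell}\cong\ell\times E$ into the set of all sinks $\set{\beta(t,k+j):t\in F,\ 1\le j\le\ell}\cong\ell\times F$, so $\abs{\ell\times E}\le\abs{\ell\times F}$; division-by-$\ell$ (a theorem of ZF, as recalled in the introduction; the case $\ell=0$ is degenerate) now yields $\abs{E}\le\abs{F}$. I expect the only genuinely delicate step to be ruling out infinite paths: that is the single point where Dedekind-finiteness of the $B_i$ is used, and it must be, since the statement fails without that hypothesis. Everything else — the degree bookkeeping, the order-preserving choice of $\mu_s$, and the invocation of division-by-$\ell$ — is routine.
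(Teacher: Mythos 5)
Your proof is correct and is essentially the paper's own argument in graph-theoretic dress: your slot digraph is the paper's disjointification step, following a path from an $E$-source to its sink is the paper's iteration of the witnessing injection until it lands in $F$, your no-infinite-paths claim is the paper's appeal to Dedekind-finiteness of the merged set $B = B_1' \cup \cdots \cup B_k'$, and both arguments conclude by dividing by $\ell$. No gaps.
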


\noindent Thus reasoning about the cardinality of Dedekind-finite sets turns out to be exactly the same as reasoning about imprecise probability. (We note that Generalized Finite Cancellation is, as the name suggests, a generalization of the principle of Finite Cancellation. Finite Cancellation is just the special case $\ell = 1$, and it is, together with Totality, the central principle for reasoning about the cardinalities of finite sets; see Section \ref{sec:finite}.)

Now in the general case, where we consider all sets rather than just Dedekind-finite sets, the key principle is a certain restriction of Generalized Finite Cancellation. This principle has not appeared in the literature on imprecise probability.

\begin{theorem}[Covered Generalized Finite Cancellation]\label{thm:CGFC}
	All sets satisfy Covered Generalized Finite Cancellation: Suppose that 
	\[\innerprod{A_1,\ldots,A_k,\underbrace{E,\ldots,E}_{\ell}} =_0 \innerprod{B_1,\ldots,B_k,\underbrace{F,\ldots,F}_{\ell}},\]
	that each $|B_i| \leq |A_i|$, and that for each $i$, there is $n$ such that $|A_i| \leq |n \times F|$. Then $|E| \leq |F|$.
\end{theorem}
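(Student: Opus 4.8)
The plan is to prove the soundness of Covered Generalized Finite Cancellation directly, by manufacturing an injection $E \hookrightarrow F$ from the hypotheses, along the lines of (and reusing the machinery behind) Theorem~\ref{thm:GFC}. First I would normalise: replacing each set in sight by its intersection with $X := \bigcup_{i}(A_i \cup B_i) \cup E \cup F$, we may assume every set is a subset of a fixed $X$. Unwinding Definition~\ref{def:balanced}, the balanced hypothesis becomes the pointwise identity
\[ \abs{\set{i\le k : s\in A_i}} + \ell\cdot\chi_E(s) \;=\; \abs{\set{i\le k : s\in B_i}} + \ell\cdot\chi_F(s) \qquad (s\in X), \]
where $\chi$ denotes the characteristic function. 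Matching, for each element, its occurrences on the two sides in increasing order of the index each occurrence sits under, this identity produces a canonical bijection $\Phi$ from the ``left occurrences'' $\bigsqcup_{i\le k}(\set i\times A_i)\sqcup(\set{1,\dots,\ell}\times E)$ onto the ``right occurrences'' $\bigsqcup_{i\le k}(\set i\times B_i)\sqcup(\set{1,\dots,\ell}\times F)$ which preserves the underlying element. Since there are only finitely many indices, I may also fix injections $f_i\colon B_i\hookrightarrow A_i$ (from $\abs{B_i}\le\abs{A_i}$) and $g_i\colon A_i\hookrightarrow\set{1,\dots,n_i}\times F$ (from the covering hypothesis) with no appeal to choice.

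Next I would assemble this data into a single partial injection $\sigma$ on the set $V$ of all occurrences: on a left occurrence $v$ put $\sigma(v):=\Phi(v)$; on a right occurrence lying in $\set i\times B_i$ send its element through $f_i$ to the corresponding point of $\set i\times A_i$; and leave $\sigma$ undefined on the $F$-occurrences $\set{1,\dots,\ell}\times F$. Then $\sigma$ is injective, the $F$-occurrences are exactly its sinks, the $E$-occurrences are sources (nothing maps to them), and every non-$F$ occurrence has a $\sigma$-successor. Iterating $\sigma$ from an occurrence $(1,s)$ with $s\in E$ gives a walk that alternates through the $A/B$-occurrences; it cannot enter a cycle (tracing $\sigma^{-1}$ backwards from a cycle vertex would eventually produce a $\sigma$-preimage of a source), so it either reaches an $F$-occurrence or is infinite. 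Letting $E_{\mathrm{fin}}$ be the set of $s$ whose walk is finite, the map sending such $s$ to the $F$-occurrence at which its walk terminates is well-defined and injective (distinct walks are disjoint, again by injectivity of $\sigma$), so $E_{\mathrm{fin}}$ injects into $\ell$ copies of $F$.

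Two things remain, and they are where the work is. The first is bridging from $\ell\times F$ to $F$: I would do this by induction on $\ell$, the case $\ell=1$ being immediate, and the inductive step peeling off a single $(E,F)$ pair from the balanced equation while preserving the covering hypothesis — using the sorted-matching description of $\Phi$, which forces the last $\ell$ left occurrences of an element of $E\cap F$ to match its last $\ell$ right occurrences, so those elements cancel cleanly, while the elements of $E\setminus F$ are governed by the initial segment of the walk. The second, and the main obstacle, is dealing with \emph{infinite} walks, and this is exactly where the covering hypothesis takes over the role played by Dedekind-finiteness in Theorem~\ref{thm:GFC}: there an infinite walk would enumerate a countably infinite subset of some $B_i$, contradicting Dedekind-finiteness, so no infinite walk exists and $E=E_{\mathrm{fin}}$; here $B_i$ may be Dedekind-infinite, so infinite walks can genuinely occur. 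The point to exploit is that an infinite walk visits infinitely many distinct occurrences inside a single $\set{i^{*}}\times A_{i^{*}}$, and $g_{i^{*}}$ then exhibits a countably infinite subset of $\set{1,\dots,n_{i^{*}}}\times F$; this copy of $\omega$ inside (finitely many copies of) $F$, together with the balanced identity — which ties the surplus of the $B_{i}$'s over the $A_{i}$'s back to $E\setminus F$ and $F\setminus E$ — must be used to re-route the sources feeding infinite walks into $F$. Arranging this re-routing to be simultaneously injective, compatible with the finite-walk routing, and definable in $\mathsf{ZF}$ is the delicate core; I expect the cleanest execution is a single monotone fixed-point construction in the spirit of the Schröder–Bernstein theorem, allocating all occurrences — those on finite walks, those on infinite walks, and the threads through the copies of $\omega$ inside $F$ — at once, rather than an iterative walk-chase.
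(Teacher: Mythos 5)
Your setup tracks the paper's proof closely: reduce to disjoint occurrences, fix injections $f$ and $g$, follow the orbit $x, f(x), f^2(x), \ldots$ of each $E$-occurrence, route the finite orbits to their terminal $F$-occurrence, and observe that an infinite orbit pushes a copy of $\omega$ into $n \times F$ via $g$. But the step you explicitly defer --- re-routing the sources of infinite walks into $F$ ``simultaneously injectively, compatible with the finite-walk routing'' --- is not a technicality to be discharged by a Schr\"oder--Bernstein-style fixed point; it is the mathematical heart of the theorem, and a monotone fixed-point construction will not produce it. The paper's resolution is a cardinal-arithmetic absorption argument: letting $Y$ be the set of sources with infinite orbits and $F' \subseteq F$ the projection of the image of the map $(x,m) \mapsto g(f^m(x)) \colon Y \times \omega \to n \times F$, one has $|F'| \le |Y \times \omega|$ and hence $|(Y \sqcup F') \times \omega| \le |Y \times \omega| \le |n \times F'|$ (using $|\omega \times \omega| = |\omega|$); restricting to $n$ copies and applying division-by-$n$ (Theorem \ref{thm:div-by-m}) yields $|Y \sqcup F'| \le |F'|$, hence $|Y \sqcup F'| = |F'|$. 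This equation is what lets you absorb all of $Y$, together with the finite-walk sources landing in $F'$, into $F'$ itself, while the remaining finite-walk sources go into $F \setminus F'$. Without invoking division-by-$n$ at this point you have no way to trade the ``many threads through copies of $\omega$ inside $n \times F$'' for a single injection into $F$; any argument that does so is secretly re-proving Lindenbaum--Tarski division, which is itself a nontrivial choiceless theorem. Your proof as written establishes only $|E_{\mathrm{fin}}| \le |\ell \times F|$ plus an injection $Y \times \omega \hookrightarrow n \times F$, and stops where the theorem actually begins.

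A secondary issue: your bridge from $\ell \times F$ down to $F$ by induction on $\ell$, ``peeling off a single $(E,F)$ pair while preserving the covering hypothesis,'' is both unnecessary and dubious --- it is not clear that the balanced identity survives the peeling. The paper avoids this entirely: after replacing the $\ell$ copies of $E$ and $F$ by disjoint copies, it treats $\ell \times E$ and $\ell \times F$ as single sets, proves $|\ell \times E| \le |\ell \times F|$, and concludes $|E| \le |F|$ by one application of division-by-$\ell$ at the very end. I would recommend adopting that reduction and then filling the gap above with the $|Y \sqcup F'| = |F'|$ absorption.
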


The structure of this paper is as follows. In Section \ref{sec:no-choice} we describe cardinal arithmetic in the absence of the Axiom of Choice and prove Theorems \ref{thm:GFC} and \ref{thm:CGFC} as pure set-theoretic results. Then in Section \ref{sec:formal} we give the formal definitions of our language and models, explicitly give our logics \DedFinLogic (Definition \ref{dedfinlogic}) and \CardCompLogic (Definition \ref{cardcomplogic}) for reasoning about additive cardinality comparison for Dedekind-finite and arbitrary sets respectively. We also prove in Theorems \ref{thm:soundness} and \ref{thm:soundness-inf} that these logics are sound. The rest of the paper is concerned with proving completeness for these two logics:

\begin{theorem}\label{thm:completeness}
    \DedFinLogic is complete with respect to Dedekind-finite  cardinality models.
\end{theorem}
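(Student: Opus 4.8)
The plan is to prove completeness in the standard way for this kind of logic: given a set $\Gamma$ of formulas in the language that is consistent with respect to \DedFinLogic, we must produce a Dedekind-finite cardinality model satisfying $\Gamma$. By a Lindenbaum-style argument and the finiteness of the relevant data, it suffices to handle a single finite consistent ``diagram'': a finite Boolean algebra $B$ together with a binary relation $\cge$ on $B$ that is forced to satisfy the syntactic closure conditions of \DedFinLogic. Thus the real content is a representation theorem: if $(B,\cge)$ satisfies the axioms of \DedFinLogic --- reflexivity, transitivity, $b \cge \bot$, not $\bot \cge \top$, and (the Dedekind-finite analogue of) Generalized Finite Cancellation --- then there is a model $\mc{U}$ of ZF, a set $X \in \mc{U}$, and a field of sets $\{A_p : p \in B\}$ over $X$ representing $B$ with all $A_p$ Dedekind-finite and $p \cge q \iff |A_p| \ge |A_q|$.

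The first step is to pass from the combinatorial structure $(B,\cge)$ to an algebraic object on which Jech's Theorem \ref{thm:Jech} can be leveraged. The idea is that a field of sets over $X$ with all blocks Dedekind-finite behaves, under disjoint union, like a partial commutative monoid of ``Dedekind-finite cardinalities'': assigning to each atom $a$ of $B$ a formal cardinality $\kappa_a$, the cardinality of $A_p$ is $\sum_{a \le p} \kappa_a$, and the comparison $|A_p| \ge |A_q|$ must be realized by an actual injection. So I would first use the Generalized Finite Cancellation hypothesis to show that the system of demanded inequalities $\{\sum_{a\le p}\kappa_a \ge \sum_{a \le q}\kappa_a : p \cge q\}$, together with its negations $\{\sum_{a\le p}\kappa_a \not\ge \sum_{a\le q}\kappa_a : \text{not } p\cge q\}$, is ``finitely satisfiable'' in the sense that no finite cancellation combination derives a contradiction --- this is exactly what GFC buys, being the closure condition that detects precisely the forbidden patterns. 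Here the connection to imprecise probability is the guide: GFC is the known complete principle for when a family of comparisons extends to (the lower envelope of) a set of measures, and the Dedekind-finite cardinals embed into such a structure.

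The second, more set-theoretic step is to actually build the model. Concretely, I expect to take a Fraenkel--Mostowski / symmetric extension construction (as in the proof of Jech's theorem) in which one adjoins ``atoms'' arranged so that the atom-blocks $A_a$ are Dedekind-finite and mutually in exactly the prescribed injection relationships; then set $A_p = \bigcup_{a \le p} A_a$ and $X = A_\top$. One must verify three things: (i) each $A_p$ is Dedekind-finite in $\mc{U}$ --- this follows if each building block is Dedekind-finite and the supports are set up so no countable sequence of distinct elements is definable; (ii) whenever $p \cge q$ there is an injection $A_q \hookrightarrow A_p$ in $\mc{U}$ --- the injections witnessing the ``core'' GFC-consistent assignment must be arranged to be symmetric, hence to live in $\mc{U}$; and (iii) whenever not $p \cge q$ there is \emph{no} injection $A_q \hookrightarrow A_p$ in $\mc{U}$ --- this is a homogeneity/genericity argument showing any such injection would have to respect the symmetry group and hence would yield a ``reason'' (a finite cancellation witness) that $p \cge q$ should have held, contradicting consistency.

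The main obstacle is step (iii) combined with getting (ii) and (iii) simultaneously for \emph{all} pairs with a single construction: one needs the symmetry group and the supports tuned finely enough that the prescribed injections survive while all non-prescribed ones are killed, and the bookkeeping that translates ``an injection exists in the symmetric model'' back into ``a GFC-style derivation exists'' is delicate --- this is where the precise form of Generalized Finite Cancellation (with the multiplicity-counting balance condition and the Dedekind-finiteness hypothesis on the $B_i$) must be matched exactly against what the symmetric model permits. I would expect to isolate this as the key lemma: an injection $A_q \hookrightarrow A_p$ exists in $\mc{U}$ if and only if $p \cge q$ is derivable from the given comparisons using (the model-theoretic shadow of) GFC, and then completeness follows by combining this lemma with the soundness direction (Theorem \ref{thm:soundness}) and the observation that the \DedFinLogic-axioms are precisely this closure.
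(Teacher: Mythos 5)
Your overall route is the paper's route: reduce to a finite consistent diagram, invoke the imprecise-probability representation theorem (GFC is complete for ``lower envelope of a finite set of measures''; this is Theorem \ref{sound-and-complete:prob}, due to Alon--Lehrer and Rios Insua), and then realize the resulting structure by a Fraenkel--Mostowski construction. But there is a gap exactly where you flag ``the main obstacle'': you leave the symmetric model unspecified (``atoms arranged so that the atom-blocks are mutually in exactly the prescribed injection relationships''), and the key lemma you propose --- an injection $A_q \hookrightarrow A_p$ exists in $\mc{U}$ iff $p \cge q$ is GFC-derivable --- is not the lemma one should prove; it would force you to re-derive the representation theorem inside the symmetry analysis. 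The way out is to build the model from the \emph{measures}, not from $(B,\cge)$: take one amorphous set $A_\mu$ for each of the finitely many measures $\mu$ (with the permutation group preserving each $A_\mu$ setwise and the filter generated by finite supports), and set $E^* = \bigcup_\mu f_\mu(E) \times A_\mu$ where $f_\mu(E)$ is a finite set with $|f_\mu(E)| = \mu(E)$. Then the positive direction (ii) is immediate, and the negative direction (iii) is a pure support argument: a finitely supported injection cannot move an infinite subset of $A_\mu$ into $A_\rho$ for $\rho \neq \mu$, and amorphousness forces cofinitely much of each fiber $\{x\}\times A_\mu$ into a single fiber $\{y\}\times A_\mu$, so any injection $E^* \to F^*$ yields injections $f_\mu(E) \to f_\mu(F)$ of finite sets, i.e.\ $\mu(E) \le \mu(F)$ for every $\mu$. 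The ``reason'' extracted from an unwanted injection is thus a violated measure inequality, not a cancellation derivation, and the multiplicities $\mu(E)$ do all the bookkeeping you were worried about.

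Two smaller omissions: the conclusion of this construction is a model of ZFA (urelements), whereas the theorem concerns cardinality models whose underlying universe satisfies ZF, so you still need the Jech--Sochor embedding theorem (Theorem \ref{thm:first-embedding-thm}) to transfer to a pure model while preserving the injection facts and Dedekind-finiteness --- your mention of ``symmetric extension'' gestures at this but the transfer has to be checked (in particular that no new injections or $\omega$-sequences appear in the image of the embedding). And Dedekind-finiteness of the $E^*$ is not automatic from vague support considerations; it follows because each $E^*$ is a finite union of sets $f_\mu(E) \times A_\mu$ with $f_\mu(E)$ finite and $A_\mu$ amorphous.
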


\begin{theorem}\label{thm:completeness-inf}
    \CardCompLogic is complete with respect to cardinality models.
\end{theorem}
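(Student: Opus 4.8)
The plan is to deduce Theorem~\ref{thm:completeness-inf} from the algebraic characterization of representable pairs $(B,\cge)$ stated in the introduction (direction (2)$\Rightarrow$(1) of the main theorem) by a canonical-model argument, in exact parallel with the proof of Theorem~\ref{thm:completeness} but with the finite-cancellation rule replaced by its covered variant. Suppose $\varphi$ is a formula of our language that is not a theorem of \CardCompLogic; we must produce a cardinality model in which $\varphi$ fails. Let $v_1,\dots,v_n$ be the set variables occurring in $\varphi$ and let $B$ be the free Boolean algebra on $v_1,\dots,v_n$, a finite Boolean algebra whose atoms are the $2^n$ Boolean regions. Every subterm of $\varphi$ names an element of $B$ and every atomic subformula is of the form $|s|\ge|t|$. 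Since $\not\vdash\varphi$, the set $\{\neg\varphi\}$ is \CardCompLogic-consistent, and since modulo provable Boolean equivalence there are only finitely many comparison atoms $p\cge q$ with $p,q\in B$, a Lindenbaum argument extends $\{\neg\varphi\}$ to a maximal consistent set $\Phi$ within this finite fragment that decides every such $p\cge q$. Define $\cge$ on $B$ by $p\cge q \iff (p\cge q)\in\Phi$; well-definedness on $B$ (rather than on terms) follows from the Boolean-algebra axioms of \CardCompLogic together with the axiom that cardinality comparison respects provable equality of terms.

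The first task is to verify that $(B,\cge)$ satisfies the five conditions of clause~(2) of the main theorem. The conditions ``not $\bot\cge\top$'', ``$b\cge\bot$ for all $b$'', reflexivity, and transitivity are the syntactic content of the corresponding axioms of \CardCompLogic together with maximal consistency. The fifth condition — Covered Generalized Finite Cancellation phrased over $B$, with the clause that each $a_i$ lie in the smallest $\cle$-downward-closed ideal containing $f$ — must match the covered-GFC rule of \CardCompLogic exactly; here one uses the precise formulation of that rule, the proof of soundness (Theorem~\ref{thm:soundness-inf}, resting on Theorem~\ref{thm:CGFC}) being the mirror image. Granting these checks, the main theorem supplies a model $\mc{U}\models\mathrm{ZF}$, a set $X\in\mc{U}$, and a field of sets $\mc{F}=\{A_p:p\in B\}$ over $X$ representing $B$ with $p\cle q \iff |A_p|\le|A_q|$. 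Interpreting $v_i$ as $A_{v_i}$, an induction on terms shows each term $s$ denotes $A_{[s]}$, hence each atomic formula $|s|\ge|t|$ holds in $\mc{U}$ iff $[s]\cge[t]$ iff $(s\cge t)\in\Phi$; a second induction on the structure of $\varphi$ (maximal consistency handling the propositional connectives) shows $\varphi$ is false in $\mc{U}$. This contradicts validity and completes the reduction.

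The substance thus lies in the representation theorem, which I expect to be proved by a Fraenkel--Mostowski / symmetric-extension construction in the style of Jech's Theorem~\ref{thm:Jech}, refined to carry both the Boolean structure and additive behaviour. One builds $X$ as a disjoint union $X=\bigsqcup_{a\in\mathrm{At}(B)}X_a$ of ``generic'' blocks indexed by the atoms of $B$, puts $A_p=\bigsqcup_{a\le p}X_a$, and takes a group of permutations of $X$ preserving the block decomposition, the hereditarily finitely supported sets forming the model (converting ZFA to ZF by the Jech--Sochor embedding or its symmetric-forcing analogue). The crux is the analysis of injections $A_p\hookrightarrow A_q$ in the model: any such injection has a finite support, hence decomposes into boundedly many ``homogeneous pieces'', each essentially a partial injection from one block $X_a$ into one block $X_b$; one must choose the cardinalities of the $X_a$, the permutation group acting on them, and the linking gadgets between blocks so that (i) whenever $p\cge q$ the required family of pieces genuinely exists, and (ii) no unwanted family exists unless $p\cge q$. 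Direction~(i) is bookkeeping; direction~(ii) is where Covered Generalized Finite Cancellation enters: the atoms are partitioned into Archimedean classes (via the $\cle$-ideal generated by finite multiples of $f$), within a class the blocks behave like the amorphous / imprecise-probability picture whose numerical content is a representation by a closed convex set of weightings satisfying GFC, and across classes the behaviour is possibilistic — one class absorbs another only by domination — so cross-class cancellation is blocked, which is precisely what the ``covered'' hypothesis encodes.

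The main obstacle, then, is step~(ii): proving the symmetric model admits \emph{no} injection $A_p\hookrightarrow A_q$ beyond those forced by $\cge$, with all Archimedean classes interacting simultaneously. This needs (a) a clean normal-form lemma for supported injections between finite unions of blocks, (b) a convex-geometric separation argument showing that failure of $p\cge q$ within the relevant covered class yields a weighting witnessing non-embeddability, and (c) a single group action and support notion realising all these constraints coherently — the analogue of the construction behind Theorem~\ref{thm:completeness}, now with the covered restriction built in in place of full GFC. I expect (b)--(c), namely reconciling the convex (within-class) and possibilistic (across-class) behaviours inside one permutation model, to be the technical heart of the argument.
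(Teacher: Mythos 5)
Your reduction of completeness to a representation theorem for a finite Boolean algebra $B$ equipped with a relation $\cge$ satisfying CGFC is exactly the paper's strategy, and your list of final ingredients (a Fraenkel--Mostowski permutation model with finite supports, amorphous-type blocks, and the Jech--Sochor embedding to pass from ZFA to ZF) is also correct. The gap is the step you yourself flag as ``the technical heart'': you do not produce the representation, and the construction you sketch --- one block $X_a$ per atom of $B$, with $A_p=\bigsqcup_{a\le p}X_a$ and unspecified ``linking gadgets'' --- is not the decomposition that works. With a single block per atom there is no mechanism to realize an arbitrary CGFC-preorder: an atom must be allowed to contribute with varying \emph{multiplicity} to different sets, and the failed comparisons $a\not\cge b$ must each be witnessed separately, which a single block structure cannot do.

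What the paper does instead is insert a purely combinatorial intermediate stage that performs all of your ``convex-geometric separation'' before any set theory appears: Theorem \ref{sound-and-complete:inf} represents $(B,\cge)$ by a finite set $P$ of finitely additive measures valued in $\mathbb{N}_{\geq 0}\cup\{\infty\}$. This is proved by restricting to the Archimedean ideal $B_a$ (the smallest ideal containing $a$ and closed downward under $\cle$), showing via an extension lemma (combining two putative GFC-instances into one) that any GFC-closed partial preorder on $B_a$ with $a\not\cge b$ extends to a \emph{total} preorder still satisfying GFC and still refuting $a\cge b$, applying the Kraft--Pratt--Seidenberg theorem (Theorem \ref{thm:KPS-representation}) to get a measure on $B_a$, and setting it to $\infty$ off $B_a$. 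Only then is the permutation model built (Theorem \ref{thm:prob-to-urelement}), and its blocks are indexed by the \emph{measures} $\mu\in P$, not by the atoms of $B$: each event $E$ is realized as $\bigcup_{\mu\in P}f_\mu(E)\times A_\mu$ with $|f_\mu(E)|=\mu(E)$ and each $A_\mu$ amorphous, so that within a class comparison reduces to comparing multiplicities $\mu(E)\le\mu(F)$, while across classes no infinite partial injection $A_\mu\to A_\rho$ exists. Without this measure-indexed intermediate structure (or an equivalent substitute), parts (a)--(c) of your ``main obstacle'' remain unresolved, so the proposal as written does not yet constitute a proof.
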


\noindent We note that these are weak completeness results, i.e., they show that any finite consistent set of formulas has a model. It is an open problem, even in the case of $\ZFC$, to find a logic which is strongly complete.

The proofs of these are divided into several steps. First, we use completeness results for models of imprecise probabilistic reasoning. For Dedekind-finite sets, this is straight from the literature on imprecise probability, while for arbitrary sets we use analogues that allow a set to be given measure $\infty$ and prove a new completeness theorem. These are described in Section \ref{sec:prop-measures-models}. In Section \ref{sec:prob-to-set} we show how to build, from these probabilistic models, a model of set theory with urelements. Finally, in Section \ref{sec:urelements} (which comes before Section \ref{sec:prob-to-set} because it contains the background on set theory with urelements), we use the Jech-Sochor Embedding Theorem to convert a model of set theory with urelements into an equivalent model without urelements.\footnote{By \textit{equivalent}, we mean $\equiv_{\mc{L}}$, as defined in Section \ref{sec:language}.} Thus we convert the completeness results for the probabilistic models into completeness results for our set-theoretic models.

\medskip

One of the key features of the axiomatization in \cite{DHTH20} under the Axiom of Choice was that there are certain facts about a set $x$ which imply that it must be finite (such as if there is a set $y$ with $|y| < |x|$ but $|x| < |x \cup y|$), and other facts about a set $x$ that imply that it must be infinite (such as if there is a set $y$ with $y \nsubseteq x$ such that $|x \cup y| = |x|$). Thus there are valid principles for reasoning about infinite sets which are invalid for finite sets, and vice versa; this is essentially an expression of the fact that cardinal arithmetic for finite and infinite sets are so different as to be orthogonal.

On the other hand, it is a consequence of our axiomatizations that without the axiom of choice any principle for reasoning about Dedekind-infinite sets is also valid for reasoning about Dedekind-finite infinite sets, and any principle for reasoning about Dedekind-finite infinite sets is also valid for reasoning about finite sets. (The converses are however false.) Thus the principles for reasoning about finite, Dedekind-finite infinite, and Dedekind-infinite sets are strictly nested rather than orthogonal.

\section{Cardinal arithmetic without the axiom of choice}\label{sec:no-choice}

We begin with a brief overview of cardinal arithmetic without the Axiom of Choice. Recall that without the axiom of choice, the ordering by cardinality is no longer total; in fact, the Axiom of Choice is equivalent to saying that for all sets $A$ and $B$, $|A|\leq|B|$ or $|B| \leq |A|$. This ordering is still partial, as seen by the following theorem (that is provable in $\ZF$), in addition to basic facts about injections.

\begin{theorem}[Cantor-Schr\"oder-Bernstein Theorem]\label{thm:CSB}
	If $\abs{X} \leq \abs{Y}$ and $\abs{Y} \leq \abs{X}$, then $\abs{X}=\abs{Y}$.
\end{theorem}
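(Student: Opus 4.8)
The plan is to run the standard choice-free proof: from injections $f \colon X \to Y$ and $g \colon Y \to X$ witnessing $\abs{X} \leq \abs{Y}$ and $\abs{Y} \leq \abs{X}$, I would build an explicit bijection $h \colon X \to Y$ by partitioning $X$ into two pieces and using $f$ on one piece and $g^{-1}$ on the other. The only point requiring care is that the whole construction stays inside ZF, with no appeal to the Axiom of Choice.

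First I would produce a suitable splitting set via a fixed point. Define $\Phi \colon \pset{X} \to \pset{X}$ by $\Phi(A) = X \setminus g\bigl[\, Y \setminus f[A] \,\bigr]$. Since taking images along $f$ and $g$ preserves $\subseteq$ and complementation reverses it an even number of times, $\Phi$ is monotone for $\subseteq$. Then, without any choice, set $C = \bigcup \{ A \subseteq X : A \subseteq \Phi(A) \}$ and check directly that $\Phi(C) = C$: monotonicity gives $C \subseteq \Phi(C)$, applying $\Phi$ again gives $\Phi(C) \subseteq \Phi(\Phi(C))$ so $\Phi(C)$ belongs to the family defining $C$, hence $\Phi(C) \subseteq C$. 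This is just the Knaster--Tarski argument, and $C$ is manifestly ZF-definable from $X, Y, f, g$.

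Next I would define and verify the bijection. From $\Phi(C) = C$ we get $X \setminus C = g\bigl[\, Y \setminus f[C] \,\bigr]$, and since $g$ is injective it restricts to a bijection from $Y \setminus f[C]$ onto $X \setminus C$; write $g^{-1}$ for its inverse. Put
\[ h(x) = \begin{cases} f(x) & \text{if } x \in C, \\ g^{-1}(x) & \text{if } x \in X \setminus C. \end{cases} \]
For injectivity: $f \res C$ and $g^{-1} \res (X \setminus C)$ are injective with disjoint ranges $f[C]$ and $Y \setminus f[C]$. For surjectivity: each $y \in f[C]$ equals $h(x)$ for some $x \in C$, and each $y \in Y \setminus f[C]$ has $g(y) \in X \setminus C$ with $h(g(y)) = y$. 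Hence $h$ is a bijection and $\abs{X} = \abs{Y}$.

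I do not expect any real obstacle here: the theorem is elementary, and the only thing worth stressing is the choice-freeness, which is immediate since the fixed point $C$ is a union of a definable family and $h$ is given by an explicit case split. As an independent cross-check one can instead decompose $X \sqcup Y$ into the orbits of the partial injections generated by $f$ and $g$ --- each orbit being a two-sided $\mathbb{Z}$-chain, a one-sided chain originating in $X$, a one-sided chain originating in $Y$, or a finite cycle --- and define $h$ orbit by orbit; this decomposition is canonical and hence also visibly choice-free.
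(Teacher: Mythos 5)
Your proof is correct. The paper itself gives no argument for this theorem --- it is recalled as a classical background fact in Section 2 --- so there is nothing to compare against; what you have written is the standard Knaster--Tarski fixed-point proof, and every step (the monotonicity of $\Phi$, the definability of the fixed point $C$ as a union, the explicit case-split defining $h$) is visibly carried out in ZF without any appeal to choice, which is exactly the point that matters in this paper's setting.
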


\noindent On the other hand, it is no longer true  that if there is a surjection from one set to another, then there is an injection the other way \cite{Tarski65}; that is, $|X| \leq |Y|$ is not the same as saying that there is a surjection from $Y$ onto $X$. Thus it is not true that if there is a surjection from $A$ onto $B$ and a surjection from $B$ onto $A$, then $A$ and $B$ are in bijection.

Recall that, with the Axiom of Choice, $|A \cup B| = \max\set{|A|,|B|}$ whenever at least one of $A$ and $B$ is infinite, so the additive structure on infinite sets is trivial. Without the Axiom of Choice this is no longer the case, and further interesting structure is revealed. A key example of this is division by $m$:
\begin{theorem}[Division by $m$, Lindenbaum (unpublished), Tarski \cite{Tarski49}]\label{thm:div-by-m}
	If $\abs{m\times A}\leq\abs{m\times B}$, then $\abs{A}\leq\abs{B}$.
\end{theorem}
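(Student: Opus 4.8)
The plan is to prove Theorem~\ref{thm:div-by-m} as a purely combinatorial fact about injections, by a chain-chasing argument in the spirit of Cantor--Schr\"oder--Bernstein; one could also just cite Tarski~\cite{Tarski49}, but a self-contained argument is worth recording. Fix an injection $f\colon m\times A\to m\times B$; the task is to build an injection $A\to B$ inside ZF, i.e.\ without choosing anything. Replacing $B$ by a disjoint bijective copy we may assume $A\cap B=\varnothing$, and we work inside $Z:=(m\times A)\sqcup(m\times B)$, calling the sets $\{a\}\times m$ and $\{b\}\times m$ the \emph{fibers}. The idea is to combine $f$ with the cyclic rotation of the fiber index set $\{0,\dots,m-1\}$ into a single map $T\colon Z\to Z$ which is a bijection, and then to follow the resulting $T$-orbits, tracking how they meet the fibers; along such an orbit one alternates between $f$-steps (which leave $m\times A$) and rotation-steps (which move within a fiber of $m\times B$), and the finiteness of $m$ makes this tracking effective, much as Bernstein's ``partner'' move does in the case $m=2$.

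The key steps, in order, would be: (i) verify that the combined map is indeed a permutation of $Z$, so that $Z$ decomposes into $T$-orbits, each a finite cycle or a copy of $\mathbb{Z}$; (ii) analyze how the orbits meet the fibers — here one does a finite case analysis, organized for instance by the partition of $m$ recording how the $m$ points of a given fiber over $A$ distribute among the fibers over $B$ (there are only finitely many such partitions) — and show that each orbit meets at least as many $B$-fibers as $A$-fibers; (iii) on each orbit, use the canonical ordering $0,\dots,m-1$ of the slots within a fiber, together with the orbit structure, to define a canonical injection from the $A$-fibers the orbit meets into the $B$-fibers it meets; (iv) glue these over all orbits to obtain an injection $A\to B$. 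For composite $m$ one may shorten step (ii) by first reducing to prime $m$, since $m\times(n\times X)\cong(mn)\times X$ shows that division-by-$mn$ follows from division-by-$m$ together with division-by-$n$.

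I expect step (iii) to be the main obstacle: one must produce an injection between the \emph{base} sets $A$ and $B$ that respects fibers, rather than merely re-exhibit $f$ between the $m$-fold products, and in ZF one cannot simply select, for each $a\in A$, one of the at most $m$ elements of $B$ that the fiber over $a$ touches under $f$. What makes a canonical selection available — and what the whole argument turns on — is the combination of the cyclic labelling of the fibers, the global injectivity of $f$, and, crucially, the bound $m<\omega$, which forces every matching problem encountered along an orbit to be a finite one and hence solvable without choice. (Division-by-$m$ is, incidentally, one of the motivating instances of the cancellation-type reasoning that the later principles of Theorems~\ref{thm:GFC} and~\ref{thm:CGFC} are designed to subsume.)
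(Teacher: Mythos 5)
The paper does not prove Theorem~\ref{thm:div-by-m} at all: it is quoted from the literature (Lindenbaum, Tarski~\cite{Tarski49}, with inequality versions due to Sierpinski and Doyle--Conway~\cite{DC94}), and the surrounding text emphasizes that even the case $m=3$ required a paper of its own. So the baseline you are competing with is a citation, and your fallback of ``just cite Tarski'' is in fact what the authors do. Judged as a self-contained argument, your proposal has genuine gaps, and they sit exactly where the theorem is hard.

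First, step (i) does not go through as described. The map $T$ that acts by $f$ on $m\times A$ and by the cyclic rotation on $m\times B$ is not a permutation of $Z$: it is not injective (a point $f(i,a)$ can also be the rotation of some other point of $m\times B$), and even if you repair this by rotating only off the image of $f$, the forward orbit of a point of $m\times A$ enters $m\times B$ after one $f$-step and can never return, so orbits do not ``alternate'' between the two sides; at best you get one-sided rays, not cycles and copies of $\mathbb{Z}$. Second, and more seriously, the $m$ points of a fiber over a given $a\in A$ need not lie in a single orbit or component, so ``the $A$-fibers the orbit meets'' do not partition $A$; your glueing step (iv) can then assign the same $a$ two different partners, or two different $a$'s the same $b$, and nothing in the sketch rules this out. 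Third, step (iii) --- producing a canonical, choice-free injection from the $A$-fibers of a component into its $B$-fibers, including for components meeting infinitely many fibers of each kind and meeting each fiber with multiplicity up to $m$ --- is the entire content of the theorem. You correctly flag it as the main obstacle, but you do not supply the idea that overcomes it; this is precisely the point at which Tarski's argument and the Doyle--Conway treatment become delicate. The reduction to prime $m$ via $m\times(n\times X)\cong(mn)\times X$ is fine but does not touch the core difficulty. As it stands the proposal is a plausible plan with the decisive steps missing, not a proof.
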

\noindent This is true for trivial reasons under the Axiom of Choice, but it becomes a non-trivial theorem without the Axiom of Choice. The history of division-by-$m$ is long. It was first proved for equalities by Lindenbaum, but the proof was unpublished and lost. A proof for the $m=2$ case was given by Bernstein \cite{Bernstein05} and Sierpinski \cite{Sierpinski22}. Later, Sierpinski also gave a proof for dividing an inequality by two \cite{Sierpinski47}. A proof for the $m=3$ case was given by Tarksi \cite{Tarski49}, which can be generalized for any $m\neq0$. Finally, an easy-to-read proof for dividing an inequality by three given by Doyle and Conway can be found in \cite{DC94}.



\subsection{Dedekind-finite sets and generalized finite cancellation}

There are two possible definitions of finite which were, for much of the history of mathematics, assumed to be equivalent. First, we say that a set is \textit{finite} if it is in bijection with some natural number $n = \set{0,1,\ldots,n-1} \in \omega$; otherwise, it is \textit{infinite}. Dedekind introduced an alternative definition in \cite{Dedekind1888}. We say that a set is \textit{Dedekind-finite} if it is not in bijection with a proper subset of itself, or equivalently, if it does not admit an injection of $\omega$; otherwise, it is \textit{Dedekind-infinite}.

One can use the Axiom of Choice to prove that the two definitions coincide: a set is finite if and only if it is Dedekind-finite. In general (i.e., without assuming the Axiom of Choice) a finite set is always Dedekind-finite, but a Dedekind-finite set might not be finite. That is, there may exist an infinite but Dedekind-finite set, sometimes referred to as a \textit{Dedekind set}. Any finite set is of lower cardinality than any infinite set, including a Dedekind-finite one.

One particular type of Dedekind-finite set is an amorphous set. An infinite set $A$ is said to be \textit{amorphous} if there do not exist infinite disjoint sets $B,C \subseteq A$ such that $A = B \sqcup C$. Every amorphous set is Dedekind-finite, but not necessarily vice versa. (For example, the union of two infinite amorphous sets is not amorphous, while the union of two infinite Dedekind-finite sets is still Dedekind-finite.)

Dedekind-finite sets behave like finite sets in some ways, but not others. For example, there are Dedekind-finite sets which are of incomparable cardinality (and indeed, in Theorem \ref{thm:Jech}, if the partial order $\mc{P}$ is finite the sets $A_p$ may be taken to be Dedekind-finite) whereas the cardinalities of finite sets are all natural numbers and hence comparable. On the other hand, Dedekind-finite sets are like finite sets in that they satisfy a subtraction principle.

\begin{theorem}[Subtraction, see \cite{DC94}]\label{thm:subtraction}
	For any Dedekind-finite set $Z$, if $\abs{X\sqcup Z} = \abs{Y\sqcup Z}$, then $\abs{X} = \abs{Y}$.    
\end{theorem}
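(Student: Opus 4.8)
The plan is to chase chains of a witnessing bijection, in the style of the Banach decomposition, and to use the Dedekind-finiteness of $Z$ to eliminate every chain except the innocuous ones. First I would reduce to the case where $X$, $Y$, and $Z$ are pairwise disjoint, by replacing them with isomorphic copies (say $X \times \{0\}$, $Y \times \{1\}$, $Z \times \{2\}$); this affects neither the hypothesis nor the conclusion. So fix a bijection $f \colon X \sqcup Z \to Y \sqcup Z$ and set $U = X \sqcup Y \sqcup Z$, noting that $U = \dom f \cup \ran f$.

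Next I would form the directed graph on vertex set $U$ with an edge $u \to f(u)$ for each $u \in \dom f = X \sqcup Z$. Since $f$ is injective with domain $X \sqcup Z$ and range $Y \sqcup Z$, each vertex has out-degree exactly $1$ if it lies in $X \sqcup Z$ and $0$ otherwise, and in-degree exactly $1$ if it lies in $Y \sqcup Z$ and $0$ otherwise. In particular, vertices in $X$ have no predecessor, vertices in $Y$ have no successor, and vertices in $Z$ have exactly one of each. By the standard structure theorem for graphs with in- and out-degree at most $1$, every connected component of this graph is either a simple directed cycle or a simple directed path (finite, infinite in one direction, or infinite in both).

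The crux is then to classify the components. A cycle or a two-sidedly infinite path consists entirely of vertices having both a predecessor and a successor, hence entirely of vertices of $Z$; a one-sidedly infinite path has its unique endpoint in $X$ or $Y$ and all remaining vertices in $Z$. So in either infinite case, enumerating the path along its direction yields an injection of $\omega$ into $Z$ — no choice is needed, since the path is already canonically linearly ordered — contradicting the Dedekind-finiteness of $Z$. Hence every $X$-vertex and every $Y$-vertex lies on a finite path. A finite path has a unique source, which must lie in $X$, a unique sink, which must lie in $Y$, and interior vertices in $(X \sqcup Z) \cap (Y \sqcup Z) = Z$; thus it contains exactly one element of $X$ and exactly one of $Y$. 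Mapping the source of each finite path to its sink therefore defines a bijection $X \to Y$, giving $\abs{X} = \abs{Y}$. The only real obstacle is the observation in this last paragraph that Dedekind-finiteness of $Z$ is precisely what forbids the infinite chains (the cycles, which live entirely in $Z$, being harmless); the remaining bookkeeping is routine, and — worth noting — the argument invokes neither the Axiom of Choice nor Cantor–Schröder–Bernstein.
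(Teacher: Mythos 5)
Your argument is correct, and it is essentially the same chain-chasing idea the paper itself uses: the paper does not prove Subtraction directly (it defers to \cite{DC94}), but the special case $\la A,E\ra =_0 \la B,F\ra$ inside its proof of Theorem \ref{thm:GFC} is exactly your forward-orbit construction --- iterate the injection until you land in the target set, with Dedekind-finiteness of $Z$ (there, $B$) ruling out the infinite chains --- and your graph-component packaging of cycles and finite paths is just a more symmetric presentation that yields the bijection in one pass. One small caveat: under the paper's reading of $\abs{X\sqcup Z}=\abs{Y\sqcup Z}$ as a pair of injections, extracting your starting bijection $f$ is precisely an appeal to Cantor--Schr\"oder--Bernstein, so your closing claim that CSB is not invoked holds only if the hypothesis is taken to supply a bijection outright (harmless either way, since CSB is a theorem of ZF).
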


Finally, we will prove that Dedekind-finite sets satisfy the principle of \textit{Generalized Finite Cancellation (GCF)}. GFC first appeared in work of Rios Insua \cite{Insua92} and Alon and Lehrer \cite{AL14} on characterisations of relations of imprecise probability. We prove the same principle, though in a completely different context: We have Dedekind-finite sets instead of events in a probability space, and we compare using cardinality instead of relative likelihood. One can also think of Generalized Finite Cancellation as being a common generalization of both Division-by-$m$ (Theorem \ref{thm:div-by-m}) and Subtraction (Theorem \ref{thm:subtraction}).

Recall from Definition \ref{def:balanced} that we say that two sequences of sets $\innerprod{E_1,\ldots,E_k}$ and $\innerprod{F_1,\ldots,F_k}$ are \textit{balanced}, and write $\innerprod{E_1,\ldots,E_k} =_0 \innerprod{F_1,\ldots,F_k}$, if and only if for all $s$, the cardinality of $\set{i \ | \ s\in E_i}$ is equal to the cardinality of $\set{i \ | \ s\in F_i}$; that is, if every $s$ appears the same number of times on the left side as on the right side.


{
\renewcommand{\thetheorem}{\ref{thm:GFC}}
\begin{theorem}[Generalized Finite Cancellation]
  Suppose that
	\[\innerprod{A_1,\ldots,A_k,\underbrace{E,\ldots,E}_{\ell}} =_0 \innerprod{B_1,\ldots,B_k,\underbrace{F,\ldots,F}_{\ell}},\]
	that $B_1,\ldots,B_k$ are Dedekind-finite, and that $\abs{A_i} \geq \abs{B_i}$ for each $i$. Then $\abs{E}\leq\abs{F}$.
\end{theorem}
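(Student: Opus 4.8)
The plan is to convert the combinatorial balancedness hypothesis into an equinumerosity of disjoint unions, and then let the classical principles of Subtraction (Theorem~\ref{thm:subtraction}) and Division-by-$m$ (Theorem~\ref{thm:div-by-m}) finish the job. Throughout I assume $\ell\ge 1$, since $E$ and $F$ occur in the hypothesis only when $\ell\ge 1$; I write $\ell X$ for the disjoint union $\underbrace{X\sqcup\cdots\sqcup X}_{\ell}$, i.e.\ for $\ell\times X$.

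First I would unpack balancedness into a bijection. Setting $n=k+\ell$ and letting $C_i$ and $D_i$ denote the $i$-th entries of the two length-$n$ sequences, for each element $s$ the fibres $\{i\le n: s\in C_i\}$ and $\{i\le n: s\in D_i\}$ are subsets of $\{1,\dots,n\}$ of equal finite cardinality; matching each such pair of subsets by the unique order-preserving bijection between them, and amalgamating over all $s$, produces a \emph{canonical} (hence choice-free) bijection
\[ A_1\sqcup\cdots\sqcup A_k\sqcup\ell E\ \cong\ B_1\sqcup\cdots\sqcup B_k\sqcup \ell F . \]
Abbreviating $X=\bigsqcup_{i=1}^k A_i$ and $Y=\bigsqcup_{i=1}^k B_i$, this says $X\sqcup\ell E\cong Y\sqcup\ell F$.

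Next I would bring in the remaining hypotheses. From $|A_i|\ge|B_i|$, fix injections $g_i\colon B_i\to A_i$; then $G=\bigsqcup_i g_i\colon Y\to X$ is an injection. Moreover $Y$ is Dedekind-finite, since each $B_i$ is and a finite disjoint union of Dedekind-finite sets is Dedekind-finite: from an injection $h\colon\omega\to Y$ one finds, by a choice-free pigeonhole argument, an $i$ for which $\{n: h(n)\text{ lies in the $i$-th summand}\}$ is infinite, and pre-composing $h$ with the increasing enumeration of that set gives an injection $\omega\to B_i$, a contradiction. Now write $X=G(Y)\sqcup(X\setminus G(Y))$ and transport the copy of $Y$ via $G$:
\[ Y\sqcup\bigl[(X\setminus G(Y))\sqcup\ell E\bigr]\ \cong\ G(Y)\sqcup(X\setminus G(Y))\sqcup\ell E\ \cong\ X\sqcup\ell E\ \cong\ Y\sqcup\ell F. \]
Subtraction (Theorem~\ref{thm:subtraction}), applied with the Dedekind-finite set $Y$, gives $\bigl|(X\setminus G(Y))\sqcup\ell E\bigr|=|\ell F|$, whence $|\ell E|\le|\ell F|$; then Division-by-$\ell$ (Theorem~\ref{thm:div-by-m}) yields $|E|\le|F|$.

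Granting Subtraction and Division-by-$m$, I do not anticipate a serious obstacle; the two points that want care are (i) keeping the bijection of the first step canonical, so that the Axiom of Choice is not invoked, and (ii) the ZF fact that a finite disjoint union of Dedekind-finite sets is Dedekind-finite. The single non-mechanical idea is the decomposition $X=G(Y)\sqcup(X\setminus G(Y))$: rewriting $X\sqcup\ell E$ this way is exactly what exhibits a shared Dedekind-finite summand $Y$ on both sides of $X\sqcup\ell E\cong Y\sqcup\ell F$, after which everything is forced.
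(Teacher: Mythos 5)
Your proof is correct, and after the normalization step it takes the route the paper explicitly declines to take. Both arguments begin identically: you disjointify the two sequences via a canonical (choice-free) matching of the fibres $\{i : s\in C_i\}$ and $\{i : s\in D_i\}$, amalgamate the $A_i$ and $B_i$ into single sets, and plan to finish with Division-by-$\ell$; the paper does exactly this by tagging the $i$th occurrence of $a$ as $(a,i)$. Where you diverge is the core step. You exhibit the Dedekind-finite set $Y=\bigsqcup B_i$ as a common summand of both sides of $X\sqcup \ell E\cong Y\sqcup \ell F$ (via the decomposition $X=G(Y)\sqcup(X\setminus G(Y))$) and then invoke Subtraction (Theorem \ref{thm:subtraction}) as a black box. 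The paper instead remarks ``from here one could use subtraction'' but constructs the injection $E\to F$ by hand: given $x\in E$, iterate the injection $f:B\to A$ along the orbit $x, f(x), f^2(x),\ldots$ until it lands in $F$, with Dedekind-finiteness of $B$ guaranteeing termination. The trade-off is that your version is more modular and shorter, resting on two classical theorems, while the paper's explicit orbit-chasing is chosen deliberately because it is the template that generalizes to Covered GFC (Theorem \ref{thm:CGFC}), where $B$ need not be Dedekind-finite, orbits may fail to terminate, and the non-terminating part must be handled by a separate injection into $n\times F$ --- a situation in which the Subtraction shortcut is no longer available. Your two flagged points of care (canonicity of the fibre bijection, and closure of Dedekind-finiteness under finite disjoint unions via a least-index pigeonhole) are both handled correctly and choice-freely.
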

\addtocounter{theorem}{-1}
}
\begin{proof}
	Among the elements of $\innerprod{A_1,\ldots,A_k,{E,\ldots,E}}$ there may be repeated elements. We may replace the $i$th appearance of each element $a$ in the sequence by the ordered pair $(a,i)$. In doing so, we obtain a new sequence of pairwise disjoint sets $\innerprod{A_1',\ldots,A_k',E_1,\ldots,E_\ell}$, and each set in this sequence is in bijection with the corresponding set of the original sequence. Similarly, we can replace the second sequence by $\innerprod{B_1',\ldots,B_k',F_1,\ldots, F_\ell}$. We still have that $\abs{A_i'} \geq \abs{B_i'}$ for each $i$, and that the sequences are balanced:
	\[\innerprod{A_1',\ldots,A_k',E_1,\ldots,E_\ell} =_0 \innerprod{B_1',\ldots,B_k',F_1,\ldots, F_\ell}. \]
	Indeed we can now replace $A_1',\ldots,A_k'$ and $B_1',\ldots,B_k'$ by their disjoint unions $A = A_1' \cup \cdots \cup A_k'$ and $B = B_1' \cup \cdots \cup B_k'$. Then $\abs{A} \geq \abs{B}$, and we have balanced sequences
	\[\innerprod{A,E_1,\ldots,E_\ell} =_0 \innerprod{B,F_1,\ldots, F_\ell}. \]
	Moreover, $B$ is Dedekind-finite. We will show that $\abs{\ell \times E} = \abs{E_1 \cup \cdots \cup E_\ell} \leq \abs{F_1 \cup \cdots \cup F_\ell} = \abs{\ell \times F}$, from which it follows by dividing by $\ell$ (Theorem \ref{thm:div-by-m}) that $\abs{E} \leq \abs{F}$. So, without loss of generality, we may assume (replacing $E_1 \cup \cdots \cup E_\ell$ by $E$ and $F_1 \cup \cdots \cup F_\ell$ by $F$) that we are in the case
	\[\innerprod{A,E} =_0 \innerprod{B,F}. \]
	Now from here one could use subtraction, but we will give a full proof both for completeness and because it will soon be generalized.
	
	Fix an injection $f : B \to A$ witnessing that $\abs{A} \geq \abs{B}$. We will define an injection $g: E \to F$. Given $x \in E$, we must define $g(x)$. By the balancing assumption, $x \in B \cup F$. If $x \in F$, let $g(x) = x$. Otherwise, if $x \in B$, $f(x) \in A$. By the balancing assumption, $f(x) \in B \cup F$; if $f(x) \in F$, set $g(x) = f(x)$. Otherwise, $f(x) \in B$ and so $f(f(x)) \in A$. Continue until, for some $k$, $f^k(x) \in F$ and we define $g(x) = f^k(x)$. If there does not exist $k$ such that $f^k(x) \in F$, then $x,f(x),f(f(x)),\ldots$ are all in $B$. Since $f$ is injective, this sequence has no repetition, and so $B$ contains an $\omega$-sequence. This contradicts the fact that $B$ is Dedekind finite. So the construction eventually terminates and defines $g(x)$.
	
	We must argue that $g$ is injective. For distinct $x$ and $y$ in $E$, consider the two sequences $x,f(x),f(f(x)),\ldots,f^k(x) = g(x)$ and $y,f(y),f(f(y)),\ldots,f^\ell(y) = g(y)$. Each element of these two sequences, except the first elements $x$ and $y$, are in $A$. Thus $x$ does not appear anywhere in the sequence for $y$, and $y$ does not appear anywhere in the sequence for $x$. Since $f$ is injective, it must be that $g(x) \neq g(y)$. Hence $g$ is injective. Thus we have shown that $\abs{E}\leq \abs{F}$ as desired. 
\end{proof}

\subsection{Dedekind-infinite sets and covered generalized finite cancellation}

It is easy to see that infinite sets do not have to satisfy GFC; indeed
\[ \la \mathbb{N},\varnothing \ra =_0 \la 2 \mathbb{N},\mathbb{N} \setminus 2\mathbb{N} \ra\]
and $|\mathbb{N}| \leq |2 \mathbb{N}|$, but $|\mathbb{N}\setminus 2\mathbb{N}| > \varnothing$. 

Instead we will prove that arbitrary sets satisfy GFC under an additional condition that none of the sets involved are too much larger than the set $F$ for which we want to conclude that $|E| \leq |F|$. We call this principle \textit{Covered Generalized Finite Cancellation (CGFC)} as the set $F$ in a sense \textit{covers} the other sets.

{
\renewcommand{\thetheorem}{\ref{thm:CGFC}}
\begin{theorem}[Covered Generalized Finite Cancellation]
	Suppose that 
	\[\innerprod{A_1,\ldots,A_k,\underbrace{E,\ldots,E}_{\ell}} =_0 \innerprod{B_1,\ldots,B_k,\underbrace{F,\ldots,F}_{\ell}},\]
	that each $|B_i| \leq |A_i|$, and that for each $i$, there is $n$ such that $|A_i| \leq |n \times F|$. Then $|E| \leq |F|$.
\end{theorem}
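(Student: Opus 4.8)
The plan is to follow the proof of Generalized Finite Cancellation (Theorem~\ref{thm:GFC}) as closely as possible, using the covering hypothesis to take over the role that Dedekind-finiteness of $B$ played there. First I would run exactly the reduction from that proof: disjointify both sequences by replacing the $j$-th occurrence of an element by a tagged copy, amalgamate $A_1,\dots,A_k$ into a single disjoint union $A$ and $B_1,\dots,B_k$ into $B$ (so that $|B|\le|A|$ via an injection $f\colon B\to A$), and reduce the $\ell$ copies of $E$ and of $F$ to a single pair, dividing by $\ell$ at the very end using Theorem~\ref{thm:div-by-m}. The one extra thing to track is the covering bound: since each $|A_i|\le|n_i\times F|$ and the tagged $A_i'$ are pairwise disjoint, one gets $|A|\le|N\times F|$ with $N=\sum_i n_i$ by placing the copies of $F$ in disjoint blocks, and after passing to the $\ell$-fold sets this persists in the form $|A|\le|M\times F^{*}|$, where $F^{*}$ is the amalgamated $F$-set and $M=N$. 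Thus it suffices to prove: whenever $A\sqcup E=B\sqcup F$ are genuine disjoint unions on each side, $|B|\le|A|$, and $|A|\le|M\times F|$ for some $M\ge 1$ (the case $M=0$ forces $A=B=\varnothing$ and $E=F$), then $|E|\le|F|$.

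For the reduced statement I would run the chain construction verbatim. Fix $f\colon B\to A$ injective. For $x\in E$ we have $x\notin A$, hence $x\in B\sqcup F$; if $x\in F$ put $g(x)=x$, otherwise iterate $f$ and follow $x,f(x),f^2(x),\dots$ until it first lands in $F$. Let $D\subseteq E$ be the set of $x$ whose chain never enters $F$; for such $x$ the entire chain lies in $B$. The same disjointness bookkeeping as in Theorem~\ref{thm:GFC} --- crucially using that no element of $E$ lies in $\ran f\subseteq A$ --- shows that $g$ is a well-defined injection $E\setminus D\to F$, and that $(x,n)\mapsto f^n(x)$ is an injection of $D\times\omega$ into $B$.

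Here the proof departs from the Dedekind-finite case. Composing injections, $D\times\omega\hookrightarrow B\hookrightarrow A\hookrightarrow M\times F$. Since $\omega\cong M\times\omega$ we have $D\times\omega\cong M\times(D\times\omega)$, so $|M\times(D\times\omega)|\le|M\times F|$, and dividing by $M$ (Theorem~\ref{thm:div-by-m}) yields $|D\times\omega|\le|F|$. Consequently $F$ has a subset $R$ with $R\cong D\times\omega$; since $\omega\cong 1+\omega$ this gives $|R|=|D|+|R|$, hence $|F|=|R|+|F\setminus R|=|D|+|F|$. Finally, combining the injection $g\colon E\setminus D\hookrightarrow F$ with the identity on $D$ (and noting $D\cap F=\varnothing$ because $D\subseteq B$) gives $|E|=|E\setminus D|+|D|\le|F|+|D|=|F|$.

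The step I expect to be the real obstacle is the one that has no counterpart in Theorem~\ref{thm:GFC}: dealing with the bad set $D$. Without Dedekind-finiteness the chains need not terminate, so $g$ cannot be defined on all of $E$; the key realization is that the non-terminating chains are pairwise disjoint $\omega$-sequences sitting inside $B$, which makes $D$ ``$\aleph_0$-fold small'' relative to $A$ and therefore --- once division-by-$M$ is combined with the covering bound --- small enough relative to $F$ that $F$ absorbs it. Beyond that, the things to check carefully are that the covering bound genuinely survives the disjointification and the final division by $\ell$, and the two injectivity claims (for $g$ and for $(x,n)\mapsto f^n(x)$), but these are routine adaptations of what is already done for Theorem~\ref{thm:GFC}.
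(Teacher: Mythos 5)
Your proof is correct and follows essentially the same route as the paper's: the same reduction to a single balanced pair $\langle A,E\rangle =_0 \langle B,F\rangle$ with $|B|\le|A|\le|M\times F|$, the same chain construction splitting $E$ into the part whose $f$-orbit reaches $F$ (injected into $F$) and the non-terminating part $D$ (with $D\times\omega$ injecting into $B$), and the same use of division-by-$m$ to let $F$ absorb the non-terminating part. The only difference is in the final bookkeeping: the paper divides $|(Y\sqcup F')\times n|\le |n\times F'|$ by $n$, where $F'$ is the footprint of the image inside $F$, whereas you divide $|M\times(D\times\omega)|\le|M\times F|$ by $M$ to get $|D\times\omega|\le|F|$ outright and then absorb $D$ via $\omega\cong 1+\omega$ --- a slightly cleaner assembly of the same ingredients.
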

\addtocounter{theorem}{-1}
}
\begin{proof}
	First, we can argue as in Theorem \ref{thm:GFC} (by replacing repeated elements by new copies, and dividing by $\ell$ as in Theorem \ref{thm:div-by-m}) that we may assume that we are in the following simpler case: There is a balanced sequence 
	\[ \la A,E \ra =_0 \la B,F \ra \]
	with $|B| \leq |A| \leq |n \times F|$, for which we want to show that $|E| \leq |F|$.
	
	
	Fix an injection $f : B \to A$ witnessing that $\abs{A} \geq \abs{B}$, and $g : A \to n \times F$ witnessing that $\abs{A} \leq \abs{n \times F}$. First we will split $E$ up into a disjoint union $E = X \cup Y$ such that there is an injection $h_X : X \to F$ and an injection $h_Y : Y \times \omega \to n \times F$. Following this, we will combine them into a single injection $h : E = X \cup Y \to F$. Intuitively, the fact that $Y$ not only injects into $F$, but injects many times, will give us sufficient room.
	
	Given $x \in E$, by the balancing assumption, $x \in B \cup F$. If $x \in F$, put $x \in X$ and set $h_X(x) = x$. Otherwise, if $x \in B$, $f(x) \in A$. By the balancing assumption, $f(x) \in B \cup F$; if $f(x) \in F$, put $x \in X$ and set $h_X(x) = f(x)$. Otherwise, $f(x) \in B$ and so $f(f(x)) \in A$. Continue for as long as possible (even forever) until possibly, for some $k$, $f^k(x) \in F$ and we put $x \in X$ and define $h_X(x) = f^k(x)$. If there is no $k$ such that $f^k(x) \in F$, then put $x \in Y$. In that case, it must be that $x,f(x),f(f(x)),\ldots$ are all in $B$. Recalling that we have an injection $g \colon A \to n \times F$, define $h_Y: Y \times \omega \to n \times F$ by $h_Y(x,m) = g(f^m(x))$. (This is similar to what we did in Theorem \ref{thm:GFC}, except in that case the assumption of Dedekind-finiteness meant that the process always stopped, so that in the notation of this proof we would have that the set $X$ is all of $E$ and $Y$ is empty.) 
	
	We must argue that $h_X$ and $h_Y$ are injective. For each $x \in E$, we constructed above a sequence $x,f(x),f(f(x)),\ldots$ where either the sequence goes on forever with all elements in $B$ (if $x \in Y$) or all of the elements are in $B$ except for the last element which is $h_X(x)$ and is in $F$. First, we argue that for distinct $x$ and $y$ in $E$, the two sequences $x,f(x),f(f(x)),\ldots$ and $y,f(y),f(f(y)),\ldots$ associated to $x$ and $y$ are disjoint. Each element of these two sequences, except the first elements $x$ and $y$, are in $A$. Thus $x$ does not appear anywhere in the sequence for $y$, and $y$ does not appear anywhere in the sequence for $f$. Since $f$ is injective, the two sequences must be disjoint and have no repetition. In particular, if $x,y \in X$, then $h_X(x)$ and $h_Y(y)$ are the terminal elements of the sequences associated to $x$ and $y$ respectively, and hence are distinct. If $x,y \in Y$, and $k,\ell \in \omega$, then $f^k(x)$ and $f^\ell(y)$ are distinct unless $x = y$ and $k = \ell$; so since $g$ is injective, $h_Y(x,k) = g(f^k(x))$ and $h_Y(y,\ell) = g(f^\ell(y))$ are distinct.
	
	Now we will explain how to combine $h_X$ and $h_Y$ into a single injection $h : E = X \cup Y \to F$. Let $F' \subseteq F$ be that part of $F$ that appears in the image of $h_Y$, so that $h_Y$ restricts to a bijection $h_Y : Y \times \omega \to n \times F'$. Thus $|F'| \leq |Y \times \omega|$, and so
	\[ \abs{Y \sqcup F'} \leq \abs{Y \times \omega}.\]
	But then, since $|\omega \times \omega| = |\omega|$,
	\[\abs{(Y \sqcup F') \times \omega} \leq \abs{Y \times \omega} \leq n \times F'.\]
	It follows that $|(Y \sqcup F') \times n| \leq |n \times F'|$ and so, by division by $n$, that $|F'| \leq |Y \sqcup F'| \leq |F'|$. Thus, by Theorem \ref{thm:CSB}, $|Y \sqcup F'| = |F'|$. Now let $X_1 = h_X^{-1}(F')$ and $X_2 = X - X_1$. Then $|X_1| = |F'|$, and so $|Y \sqcup X_1| = |F'|$. Also, $h_X(X_2) \subseteq F - F'$, and so $|X_2| \leq |F-F'|$. Thus $|E| = |X \cup Y| = |X_1 \sqcup X_2 \sqcup Y| \leq |F|$.
\end{proof}

\section{Formal language, models, and axioms}\label{sec:formal}

\subsection{Language}\label{sec:language}

We will be defining a language that can be used to talk about sets and their Boolean combinations, and to compare their cardinalities.

\begin{definition}\label{def:language}
	Given a set $\Phi$ of \textit{set labels}, the \textit{set terms} $t$ and \textit{formulas} $\varphi$ of the language $\mathcal{L}$ are generated by the following grammar:
	\begin{align*}
	t &:: = a \ | \ t^c \ | \ (t \cap t) \\
	\varphi &:: = \abs{t}\geq\abs{t} \ | \ \neg\varphi \ | \ (\varphi \land \varphi)
	\end{align*}
	where $a \in \Phi$. The other sentential connectives $\lor, \limplies,$ and $\liff$ are defined as usual, and we use $\varphi \xor \psi$ as an abbreviation for $(\varphi \lor \psi) \land \neg(\varphi \land \psi)$. Standard set-theoretic notion may be defined as follows:
	\begin{itemize}
		\item $\es := t\cap t^c$;
		\item $t \subseteq s := \abs{\es} \geq \abs{t \cap s^c}$;
		\item $t = s := (t \subseteq s \land s \subseteq t)$ and $t \neq s := \neg(t=s)$;
		\item $t \nsubseteq s := \neg(t\subseteq s)$ and $t\subsetneq s := (t \subseteq s \land s \nsubseteq t)$.
	\end{itemize}
	We also use $\abs{s} \leq \abs{t}$ for $\abs{t}\geq \abs{s}$, $\abs{s}>\abs{t}$ for $\neg(\abs{t}\geq\abs{s})$, and $\abs{s}=\abs{t}$ for $\abs{s}\geq\abs{t}\land \abs{t}\geq\abs{s}$.
\end{definition}

Throughout this paper (and starting in the next subsection) we will define a number of different kinds of models, and what it means for an $\mc{L}$-formula to be true in those models. Given any two models $\mc{M}$ and $\mc{N}$ (possibly of different kinds), we write $\mc{M} \equiv_{\mc{L}} \mc{N}$ if they satisfy the same $\mc{L}$-formulas.


\subsection{Cardinality Models}

Recall from the introduction the definition of a field of sets:

\begin{definition}
	A \textit{field of sets} is a pair $\innerprod{X,\mathcal{F}}$ where $X$ is a non-empty set and $\mathcal{F}$ is a non-empty collection of subsets of $X$ closed under union, intersection, and set-theoretic complementation (relative to $X$).
\end{definition}

We will now be defining our first type of model for the language $\mc{L}$: the (pure) cardinality model. This model will involve a field of sets $\innerprod{X, \mc{F}}$, along with this ``naming" function $V$ which assigns to each set label, an actual set in $\mc{F}$. More formally:

\begin{definition}	
	A \textit{(pure) cardinality model} is a quadruple $\mathcal{N} = \innerprod{\mathcal{W},X,\mathcal{F},V}$, where $\mathcal{W}$ is a model of $\ZF$, $X$ is a non-empty set in $\mathcal{W}$, $\innerprod{X, \mathcal{F}}$ is a field of sets in $\mc{W}$, and $V : \Phi \to \mathcal{F}$.
\end{definition}

We say pure cardinality model to distinguish these models from the urelement cardinality models we introduce later, where $\mc{W}$ is allowed to be a model of $\ZF$ with urelements, though we may drop the descriptor ``pure'' in contexts which are free from any urelements.

We say a cardinality model is a \textit{finite/Dedekind-finite cardinality model} if all of the sets in its field of sets are finite/Dedekind-finite.

\begin{definition}\label{defn-of-model}
	Given a cardinality model $\mathcal{N} = \innerprod{\mathcal{W},X,\mathcal{F},V}$, we define a function $\hat{V}$, which assigns to each set term a set in $\mathcal{F}$, by:
	\begin{itemize}
		\item $\hat{V}(a) = V(a)$ for $a\in \Phi$
		\item $\hat{V}(t^c) = X - \hat{V}(t)$
		\item $\hat{V}(t\cap s) = \hat{V}(t)\cap\hat{V}(s)$
	\end{itemize}
	We then define a satisfaction relation $\models$ as follows:
	\begin{itemize}
		\item $\mathcal{N} \models \abs{t} \geq \abs{s}$ if and only if $\mathcal{W} \models \abs{\hat{V}(t)}\geq \abs{\hat{V}(s)}$;
		\item $\mathcal{N} \models \neg\varphi$ if and only if $\mathcal{N} \not\models \varphi$;
		\item $\mathcal{N} \models \varphi \land \psi$ if and only if $\mathcal{N} \models \varphi$ and $\mathcal{N} \models \psi$.
	\end{itemize}
	Given a class $K$ of models, $\varphi$ is \textit{valid} over $K$ if and only if $\mathcal{N}\models\varphi$ for all $\mathcal{N}\in K$.
\end{definition}

\subsection{Logic for Finite Sets}\label{sec:finite}

We include, for completeness, an axiomatization for the logic of cardinality comparison for finite sets. For finite sets, one does not have to worry about constructing new models of set theory, or whether or not the axiom of choice is true. The key principle is the Finite Cancellation principle of Scott \cite{Scott64}:
\begin{description}
    \item[Finite Cancellation:] Suppose that
	\[\innerprod{A_1,\ldots,A_k,E} =_0 \innerprod{B_1,\ldots,B_k,F},\]
is a balanced sequence of finite sets, and that $\abs{A_i} \geq \abs{B_i}$ for each $i$. Then $\abs{E}\leq\abs{F}$.
\end{description}

\begin{definition}\label{finlogic} The logic for finite sets \FinLogic is the logic for $\mathcal{L}$ with the following axiom schemas:
	\begin{enumerate}[label=(D\arabic*)]
		\item All substitution instances of classical propositional tautologies;\label{axiomf:substitutions}
		\item $\neg \abs{\es}\geq\abs{\es^c}$ (Non-triviality);\label{axiomf:non-triviality}
		\item $\abs{s}\geq\abs{\es}$ (Positivity);\label{axiomf:positivity}
		\item $\abs{s}\geq\abs{s}$ (Reflexivity);\label{axiomf:reflexivity}
		\item $\abs{s}\geq\abs{t} \land \abs{t}\geq\abs{u} \limplies \abs{s}\geq\abs{u}$ (Transitivity);\label{axiomf:transitivity}
		\item $\abs{s} \geq \abs{t}$ or $\abs{t} \geq \abs{s}$ (Totality);
		\item[(FC)] FC$_{n}(s_1,\ldots,s_n,e; t_1,\ldots,t_n,f)$;
		\makeatletter\protected@edef\@currentlabel{(FC)}\makeatother\label{axiomf:FC} 
	\end{enumerate}
	and the following rules
	\begin{enumerate}
		\item[(R1)] If $\varphi$ and $\varphi \to \psi$ are theorems, then so is $\psi$. 
        (Modus Ponens);
        \label{axiomf:modus-ponens}
		\item[(R2)] If $t=0$ is provable in the equational theory of Boolean algebras, then $\abs{\es} \geq \abs{t}$ is a theorem. \label{axiomf:emptyset}
	\end{enumerate}
\end{definition}

FC$_n(s_1,\ldots,s_n,e; t_1,\ldots,t_n,f)$ is the formal expression of the Finite Cancellation principle for the sequences $\la s_1,\ldots,s_n,e\ra$ and $\la t_1,\ldots,t_n,f \ra$. Because we cannot talk about elements of sets in our language, we must be somewhat clever to express FC. The standard way to do this is as follows. First, for each $j$ such that $1 \leq j \leq n+1$, define the term $\mc{S}_j$ as the union of the terms of the form $s_1^{c_{s,1}} \cap \ldots \cap s_n^{c_{s,k}} \cap e^{c_{e}}$, where exactly $j$ many $c_*$'s are $c$ (for set-theoretic complement) and the rest are empty. So, intuitively $\mc{S}_j$ denotes the set of elements which are in exactly $j$ many sets among $s_1,\ldots,s_k,e$. Similarly, define $\mc{T}_j$ by replacing the $s$'s with $t$'s and the $e$'s with $f$'s. Then, FC$_{n}(s_1,\ldots,s_n,e; t_1,\ldots,t_n,f)$ is defined by:
\[ \Big( \bigwedge\limits_{i=1}^{n+1} \mc{S}_i = \mc{T}_i \Big) \limplies \Big(\Big(\bigwedge\limits_{i=1}^n \abs{s_i} \geq \abs{t_i} \Big) \limplies \abs{e} \leq \abs{f} \Big) \]
The antecedent expresses that $\la s_1,\ldots,s_n,e \ra$ and $\la t_1,\ldots,t_n,f \ra$ are balanced.

Note that this logic includes Totality, which the other logics in this paper will not include. Finite Cancellation and Generalized Finite Cancellation are equivalent in the presence of Totality, but Generalized Finite Cancellation does not imply Finite Cancellation in the absence of Totality \cite{HTHI16}. \FinLogic has long been known to be sound and complete for finite cardinality models. The key result for proving completeness is the following representation theorem, which we will also make use of later in the more general cases.

\begin{theorem}[Kraft, Pratt, Seidenberg \cite{Kraft1959}, Theorem 2]
		\label{thm:KPS-representation}
		For any finite Boolean algebra $B$ with $\top$ as the top element and $\bot$
		the bottom element and any binary relation $\cge$ on $B$, there is a
		probability measure $\mu$ on $B$ such that for all $a, b \in B$, $a \cge b$ iff $\mu(a) \ge \mu(b)$, if and only if the following conditions are satisfied:
		\begin{itemize}
			\item not $\bot \cge \top$;
			\item for all $b \in B$, $b \cge \bot$;
			\item $\cge$ is a total pre-order, i.e., $\cge$ is transitive, and for any $a, b \in B$, $a \cge b$ or $b \cge a$;
			\item for any two sequences of elements $a_1, a_2, \dots , a_n$ and $b_1, b_2,
			\dots , b_n$ from $B$ of equal length, if every atom of $B$ is below (in the
			order of the Boolean algebra) exactly as many $a$'s as $b$'s, and if $a_i
			\cge b_i$ for all $i \in\{1, \dots , n-1\}$, then $b_n \cge a_n$.
		\end{itemize}
	\end{theorem}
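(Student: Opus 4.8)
The plan is to prove the biconditional in the two directions separately. The forward (necessity) direction is routine: if $\mu$ is a probability measure on $B$ with $a \cge b \iff \mu(a)\ge\mu(b)$, then $\mu(\bot)=0<1=\mu(\top)$ gives ``not $\bot\cge\top$'', $\mu(b)\ge 0$ gives ``$b\cge\bot$'', and transitivity and totality are inherited from the linear order of $\mathbb R$; for the cancellation condition, expanding each $\mu(a_i)$ and $\mu(b_i)$ as a sum of $\mu(\alpha)$ over the atoms $\alpha$ beneath it and interchanging summation, the hypothesis that every atom lies below equally many $a$'s as $b$'s gives $\sum_i\mu(a_i)=\sum_i\mu(b_i)$, which together with $\mu(a_i)\ge\mu(b_i)$ for $i<n$ forces $\mu(a_n)\le\mu(b_n)$, i.e.\ $b_n\cge a_n$. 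All the real work is in the converse, which I would prove by a theorem of the alternative (LP duality).

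For sufficiency, let $\alpha_1,\dots,\alpha_m$ be the atoms of $B$ (the hypotheses force $\bot\ne\top$, so $m\ge 1$), and identify a probability measure on $B$ with a point $x$ of the simplex $\{x\in\mathbb R^m: x_j\ge 0,\ \sum_j x_j=1\}$ via $\mu(a)=\sum_{j:\alpha_j\le a}x_j=\chi_a\cdot x$, where $\chi_a\in\{0,1\}^m$ has $j$th coordinate $1$ iff $\alpha_j\le a$. Write $a\succ b$ for ``$a\cge b$ and not $b\cge a$''; by totality, $\mu$ represents $\cge$ iff $\mu(a)\ge\mu(b)$ whenever $a\cge b$ and $\mu(a)>\mu(b)$ whenever $a\succ b$. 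Let $\Delta_W$ be the convex set of measures satisfying all the weak inequalities $\mu(a)\ge\mu(b)$ for $a\cge b$, and note $(\top,\bot)$ is a strict pair (by the first two hypotheses). Since convex combinations stay in $\Delta_W$ and a strict inequality is preserved whenever one summand is strict, it suffices to show: for every strict pair $(a^*,b^*)$ there is a measure in $\Delta_W$ that is strict at $(a^*,b^*)$. Averaging one such measure over all strict pairs then produces a representing measure (and in particular shows $\Delta_W\ne\emptyset$).

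So fix a strict pair $(a^*,b^*)$ and suppose toward a contradiction that no $x$ satisfies $x_j\ge 0$ for all $j$, $(\chi_a-\chi_b)\cdot x\ge 0$ for all $a\cge b$, and $(\chi_{a^*}-\chi_{b^*})\cdot x>0$ (such an $x$ could be rescaled into the simplex). By a standard theorem of the alternative (the Motzkin transposition theorem), since exactly one strict inequality is present there exist reals $\lambda>0$, $c_{ab}\ge 0$ (over pairs $a\cge b$), and $d_j\ge 0$ with
\[ \lambda(\chi_{a^*}-\chi_{b^*}) \;+\; \sum_{a\cge b} c_{ab}(\chi_a-\chi_b) \;+\; \sum_{j=1}^{m} d_j\chi_{\alpha_j} \;=\; 0; \]
clearing denominators we may take $\lambda,c_{ab},d_j\in\mathbb N$ with $\lambda\ge 1$. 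Form two sequences of elements of $B$: on the left, $c_{ab}$ copies of $a$ for each pair $a\cge b$, plus $\lambda$ further copies of $a^*$, plus $d_j$ copies of $\alpha_j$ for each $j$; on the right, $c_{ab}$ copies of $b$ for each pair, plus $\lambda$ further copies of $b^*$, plus $\sum_j d_j$ copies of $\bot$. These have equal length, and the $k$th coordinate of the displayed identity says precisely that $\alpha_k$ lies below equally many elements of the left sequence as of the right, so the two sequences are balanced. Now pair $a$ with $b$ in each $c_{ab}$-block (legal, $a\cge b$), pair $\alpha_j$ with $\bot$ in each $d_j$-block (legal, since $b\cge\bot$ for all $b$), and designate one copy of $(a^*,b^*)$ as the last pair. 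Every pair but the last witnesses $\cge$, yet the last pair has $b^*\not\cge a^*$ because $a^*\succ b^*$ — contradicting the cancellation hypothesis. Hence the required measure exists, finishing the proof.

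The main obstacle is the dictionary in the last paragraph: turning the dual certificate delivered by LP duality — a nonnegative integer combination of $\{x_j\ge 0\}$, $\{(\chi_a-\chi_b)\cdot x\ge 0\}$, and the single strict inequality $\{(\chi_{a^*}-\chi_{b^*})\cdot x>0\}$ that sums to the zero vector — into a genuine instance of Finite Cancellation. One has to choose exactly the right padding (copies of the atoms on the left against copies of $\bot$ on the right, which is where ``$b\cge\bot$ for all $b$'' is used), check that the coordinatewise vanishing is literally the balancedness relation $=_0$, and ensure the coefficient $\lambda$ of the strict inequality is strictly positive so that the bad pair $(a^*,b^*)$ actually survives as the final entry of the sequences; this last point is the reason one wants the transposition theorem in the form with a single strict row.
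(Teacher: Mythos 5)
The paper does not prove this statement: it is quoted verbatim as Theorem 2 of Kraft--Pratt--Seidenberg and used as a black box, so there is no in-paper proof to compare against. Your argument is, in essence, the standard proof of this representation theorem, and I find it correct. The necessity direction is routine as you say. For sufficiency, the reduction to ``one strict pair at a time'' via convex averaging is sound (the finitely many strict pairs include $(\top,\bot)$, so the average is over a nonempty set and lands in the simplex), and the duality step is the right tool: infeasibility of $\{x \ge 0,\ (\chi_a-\chi_b)\cdot x \ge 0 \text{ for } a \cge b,\ (\chi_{a^*}-\chi_{b^*})\cdot x > 0\}$ yields a nonnegative dual certificate with the multiplier $\lambda$ on the unique strict row forced to be positive, and integrality of the certificate follows from the rationality of the data. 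Your dictionary converting the certificate into an atomically balanced pair of sequences is exactly right: the padding of $d_j$ copies of $\alpha_j$ against $\bot$ is where positivity ($b \cge \bot$) enters, the extra $\lambda-1$ pairs $(a^*,b^*)$ are legal because $a^* \cge b^*$ holds for a strict pair, and the coordinatewise vanishing of the certificate is literally the condition that each atom lies below equally many left entries as right entries; the cancellation axiom then outputs $b^* \cge a^*$, contradicting strictness. Two points worth making explicit if you write this up: (i) reflexivity of $\cge$, which you use implicitly in degenerate cases, follows from the stated totality condition; and (ii) the passage from a real dual certificate to a rational one deserves a one-line justification (e.g.\ Carath\'eodory plus Cramer's rule, or the fact that a rational linear system solvable over $\mathbb{R}_{\ge 0}$ is solvable over $\mathbb{Q}_{\ge 0}$).
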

 
    The final condition is the expression (in a Boolean algebra) of FC. We express that the sequences $\la a_1,\ldots,a_n \ra$ and $\la b_1,\ldots,b_n \ra$ are balanced similarly to the way we expressed it in the logic with $\mc{S}_i$ and $\mc{T}_i$.
    
    \begin{definition}
        Given elements $a_1,\ldots,a_n$ and $b_1,\ldots,b_n$ of a Boolean algebra $B$, we say that $\la a_1,\ldots,a_n \ra$ and $\la b_1, b_2,\dots , b_n \ra$ are \textit{atomically balanced} and write $\la a_1, a_2, \dots , a_n \ra =_0 \la b_1, b_2,\dots , b_n \ra$ if every atom of $B$ is below exactly as many $a$'s as $b$'s. If some $a$'s or $b$'s are repeated, we could multiplicity.
    \end{definition}

    This is independent of $B$, i.e., if $B'$ is some other Boolean algebra containing all of the elements, two sequences are atomically balanced in $B$ if and only if they are atomically balanced in $B'$. This includes, e.g., if $B'$ is an ideal of $B$ (i.e., a subalgebra except that it has a different top elements) containing all of the $a$'s and $b$'s.

\subsection{Logic for Dedekind-Finite Sets}

We now give the logic for reasoning about the cardinality of Dedekind-finite sets. This logic is the same as the logic \textsf{IP} of imprecise probability in \cite{HTHI} and \cite{AlonHeifetz}.\footnote{There is one small difference in that \textsf{IP} allows nesting, while in our language we do not. More precisely, our logic is the fragment of \textsf{IP} without nesting.}

\begin{definition}\label{dedfinlogic} The logic for Dedekind-finite sets \DedFinLogic is the logic for $\mathcal{L}$ with the following axiom schemas:
	\begin{enumerate}[label=(D\arabic*)]
		\item All substitution instances of classical propositional tautologies;\label{axiom:substitutions}
		\item $\neg \abs{\es}\geq\abs{\es^c}$ (Non-triviality);\label{axiom:non-triviality}
		\item $\abs{s}\geq\abs{\es}$ (Positivity);\label{axiom:positivity}
		\item $\abs{s}\geq\abs{s}$ (Reflexivity);\label{axiom:reflexivity}
		\item $\abs{s}\geq\abs{t} \land \abs{t}\geq\abs{u} \limplies \abs{s}\geq\abs{u}$ (Transitivity);\footnote{This follows from the next axiom, \ref{axiom:GFC}.}\label{axiom:transitivity}
		\item[(GFC)] GFC$_{k,l}(s_1,\ldots,s_k,e; t_1,\ldots,t_k,f)$;
		\makeatletter\protected@edef\@currentlabel{(GFC)}\makeatother\label{axiom:GFC} 
	\end{enumerate}
	and the following rules
	\begin{enumerate}
		\item[(R1)] If $\varphi$ and $\varphi \to \psi$ are theorems, then so is $\psi$. 
        (Modus Ponens);\label{axiom:modus-ponens}
		\item[(R2)] If $t=0$ is provable in the equational theory of Boolean algebras, then $\abs{\es} \geq \abs{t}$ is a theorem. \label{axiom:emptyset}
	\end{enumerate}
\end{definition}

GFC$_{k,l}(s_1,\ldots,s_k,e; t_1,\ldots,t_k,f)$ is the formal expression of the GFC principle in much the same way that we expressed FC. The only difference is that we must include $l$ copies of $e$ and $f$. For each $j$ such that $1 \leq j \leq n$, define the term $\mc{S}_j$ as the union of the terms of the form $s_1^{c_{s,1}} \cap \ldots \cap s_k^{c_{s,k}} \cap e^{c_{e,1}} \cap \ldots \cap e^{c_{e,l}}$, where exactly $j$ many $c_i$'s are $c$ and the rest are empty. Similarly, define $\mc{T}_j$ by replacing the $s$'s with $t$'s and the $e$'s with $f$'s. Intuitively $\mc{S}_j$ denotes the set of elements which are in exactly $j$ many sets among $s_1,\ldots,s_k,\underbrace{e,\ldots,e}_{l}$. Then, GFC$_{k,l}(s_1,\ldots,s_k,e; t_1,\ldots,t_k,f)$ is defined by:

\[ \Big( \bigwedge\limits_{i=1}^k \mc{S}_i = \mc{T}_i \Big) \limplies \Big(\Big(\bigwedge\limits_{i=1}^k \abs{s_i} \geq \abs{t_i} \Big) \limplies \abs{e} \leq \abs{f} \Big) \] 

Soundness is almost immediate.

\begin{theorem}[Soundness]\label{thm:soundness}
	\DedFinLogic is sound with respect to Dedekind-finite cardinality models. 
\end{theorem}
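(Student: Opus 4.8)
The plan is to verify that each axiom schema is valid over Dedekind-finite cardinality models, and that the rules preserve validity; soundness then follows by induction on the length of a derivation. Most of this is routine. For \ref{axiom:substitutions}, note that the satisfaction relation for $\neg$ and $\land$ is the classical one, so any propositional tautology instance holds in every model. For \ref{axiom:non-triviality}, observe $\hat V(\es) = \emptyset$ and $\hat V(\es^c) = X$; since $X$ is nonempty, there is no injection from $X$ into $\emptyset$, so $\mc N \not\models \abs{\es}\geq\abs{\es^c}$, i.e. $\mc N \models \neg\abs{\es}\geq\abs{\es^c}$. For \ref{axiom:positivity}, the empty function witnesses $\abs{\es}\leq\abs{\hat V(s)}$. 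For \ref{axiom:reflexivity}, the identity on $\hat V(s)$ is an injection. For \ref{axiom:transitivity}, compose the two witnessing injections. The rule (R1) preserves validity because if $\mc N \models \varphi$ and $\mc N\models\varphi\to\psi$ then $\mc N\models\psi$ by the classical clause for $\to$. For (R2): if $t = 0$ holds in the equational theory of Boolean algebras, then $\hat V(t)=\emptyset$ in every cardinality model (the map $\hat V$ is a Boolean homomorphism from set terms into $\mc F$, so it respects any Boolean identity), and then $\abs{\es}\geq\abs{t}$ holds as in \ref{axiom:positivity}.

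The only substantive point is the validity of \ref{axiom:GFC}, i.e. that every Dedekind-finite cardinality model satisfies $\mathrm{GFC}_{k,l}(s_1,\ldots,s_k,e;t_1,\ldots,t_k,f)$. Fix a Dedekind-finite cardinality model $\mc N = \innerprod{\mc W, X, \mc F, V}$ and suppose $\mc N \models \bigwedge_{i=1}^k \mc S_i = \mc T_i$ and $\mc N \models \bigwedge_{i=1}^k \abs{s_i}\geq\abs{t_i}$; we must show $\mc N\models \abs{e}\leq\abs{f}$. Write $A_i = \hat V(s_i)$, $B_i = \hat V(t_i)$, $E = \hat V(e)$, $F = \hat V(f)$; these are all subsets of $X$, hence Dedekind-finite in $\mc W$ by assumption on the model. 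The key claim, to be checked inside $\mc W$, is that the hypothesis $\bigwedge_i \mc S_i = \mc T_i$ (each $\mc S_i$, $\mc T_i$ being the set of elements lying in exactly $i$ of the respective listed sets, with $E$ and $F$ each counted with multiplicity $l$) is precisely the statement that the two sequences $\innerprod{A_1,\ldots,A_k,\underbrace{E,\ldots,E}_l}$ and $\innerprod{B_1,\ldots,B_k,\underbrace{F,\ldots,F}_l}$ are balanced in the sense of Definition \ref{def:balanced}: for every element $s$, the number of sets on the left containing $s$ equals the number on the right. Granting this translation, Theorem \ref{thm:GFC} (Generalized Finite Cancellation, already proved as a pure set-theoretic fact, and available inside the model $\mc W$ of ZF) applied with the $B_i$ Dedekind-finite and $\abs{A_i}\geq\abs{B_i}$ yields $\abs{E}\leq\abs{F}$ in $\mc W$, which is exactly $\mc N\models\abs{e}\leq\abs{f}$.

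The main (and only mildly nontrivial) obstacle is therefore the bookkeeping in the previous step: confirming that the formal term $\mc S_j$ — defined as the union over all conjunctions $s_1^{c_{s,1}}\cap\cdots\cap s_k^{c_{s,k}}\cap e^{c_{e,1}}\cap\cdots\cap e^{c_{e,l}}$ with exactly $j$ of the exponents equal to the complement marker $\comp$ — is interpreted by $\hat V$ as $\{x \in X : x \text{ lies in exactly } j \text{ of } A_1,\ldots,A_k, \underbrace{E,\ldots,E}_l\}$, and hence that $\bigwedge_i \mc S_i = \mc T_i$ holding in $\mc N$ is equivalent to balancedness. Since for each fixed $x$ the "count" function is well-defined, the sets $\mc S_0,\ldots,\mc S_{k+l}$ partition $X$ according to this count, and similarly for the $\mc T_j$; equality of all these pieces (for $i\geq 1$; the $i=0$ piece is then automatically equal as the complement of the union of the rest) says exactly that every $x$ has the same count on both sides. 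This is a finite combinatorial identity about characteristic functions and requires no choice. With this in hand, and with Theorem \ref{thm:GFC} available, soundness of \DedFinLogic follows. $\qed$
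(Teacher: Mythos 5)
Your proof is correct and takes essentially the same route as the paper: the paper's own proof simply observes that the non-GFC axioms and rules are clearly valid and that the validity of \ref{axiom:GFC} is exactly the content of Theorem \ref{thm:GFC}. You supply the routine verifications and the bookkeeping translating $\bigwedge_i \mc{S}_i = \mc{T}_i$ into the balancedness hypothesis of Definition \ref{def:balanced}, which the paper leaves implicit.
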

\begin{proof}
	Most of the axioms and rules for \DedFinLogic, like non-triviality \ref{axiom:non-triviality}, reflexivity \ref{axiom:reflexivity}, and transitivity \ref{axiom:transitivity}, are clearly valid according to the semantics. So, in proving the soundness of \DedFinLogic, our main task is to show that axiom schema \ref{axiom:GFC} is valid. This is the content of Theorem $\ref{thm:GFC}$.
\end{proof}

\subsection{Logic for Arbitrary Sets}

For reasoning about any sets, we simply replace the GCF principle by CGFC.

\begin{definition}\label{cardcomplogic} The logic for \CardCompLogic is the logic for $\mathcal{L}$ the same axioms schemas and rules as \DedFinLogic, except that the axiom GFC is replaced by CGFC:
	\begin{enumerate}[label=(D\arabic*)]
		\item[(CGFC)] CGFC$_{k,l,T}(s_1,\ldots,s_k,e; t_1,\ldots,t_k,f;u_\sigma : \sigma\in T)$\makeatletter\protected@edef\@currentlabel{(CGFC)}\makeatother\label{axiom:CGFC}.
	\end{enumerate}
	where $T$ is a finite full binary tree.
	In the notation of the previous subsection, the principle CGFC$_{k,l,T}(s_1,\ldots,s_k,e; t_1,\ldots,t_k,f;u_\sigma : \sigma \in T)$ is defined by:
	\[ \left[\Big( \bigwedge\limits_{i=1}^k \mc{S}_i = \mc{T}_i \Big) \wedge \Big(\bigwedge\limits_{i=1}^k \abs{s_i} \geq \abs{t_i} \Big) \wedge \Big( \bigwedge_{i=1}^k \abs{s_i} \leq \abs{u_\varnothing}\Big) \wedge \theta(f;u_\sigma: \sigma \in T) \Big)\right] \limplies \abs{e} \leq \abs{f}.\]
	Here, $\theta(f;u_\sigma: \sigma \in T)$ expresses that element corresponding to the root node of the tree, $u_\varnothing$, is bounded above in cardinality by some finite multiple of $f$, as witnessed by the $u_\sigma$. That is, $\theta(f;u_\sigma: \sigma \in T)$ is the conjunction, over non-leaves $\sigma$ of $T$, of $\abs{u_\sigma} \leq \abs{u_{\sigma 0} \cup u_{\sigma 1}}$, and, for leaves $\sigma$ of $T$, of $\abs{u_\sigma} \leq \abs{f}$. Another way to think of this is that the $u_\sigma$ witness that, in the Boolean algebra generated by all of these elements, each $s_i$ is in the smallest Boolean ideal containing $f$ and closed downwards in cardinality.
	
\end{definition}

\begin{theorem}[Soundness]\label{thm:soundness-inf}
	\CardCompLogic is sound with respect to cardinality models. 
\end{theorem}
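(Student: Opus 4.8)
The plan is to proceed exactly as for Theorem~\ref{thm:soundness}. Every axiom schema and rule of \CardCompLogic{} except \ref{axiom:CGFC} is literally one of those of \DedFinLogic{}, and the validity of each over the class of all cardinality models is checked just as there: propositional tautologies, non-triviality, positivity, reflexivity and transitivity are immediate from the clauses of Definition~\ref{defn-of-model}; Modus Ponens trivially preserves validity; and for (R2), if $t=0$ is a theorem of the equational theory of Boolean algebras then $\hat V(t)=\es$ in every cardinality model, whence $\abs{\es}\geq\abs{t}$ is valid. So the whole of the work is to show that an arbitrary instance of \ref{axiom:CGFC} is valid, and this we will reduce to the set-theoretic Theorem~\ref{thm:CGFC}.

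Fix a cardinality model $\mc{N}=\innerprod{\mc{W},X,\mc{F},V}$ and an instance $\mathrm{CGFC}_{k,l,T}(s_1,\ldots,s_k,e;t_1,\ldots,t_k,f;u_\sigma:\sigma\in T)$, and assume $\mc{N}$ satisfies its antecedent; the goal is $\mc{N}\models\abs{e}\leq\abs{f}$. Set $A_i=\hat V(s_i)$, $B_i=\hat V(t_i)$, $E=\hat V(e)$, $F=\hat V(f)$, and $U_\sigma=\hat V(u_\sigma)$; these are genuine sets of $\mc{W}$, which is a model of ZF. The first step is to recognise the ``balancing'' conjunct: $\hat V(\mc{S}_j)$ is exactly the set of points of $X$ lying in precisely $j$ of the sets $A_1,\dots,A_k,\underbrace{E,\dots,E}_{\ell}$ (the $\ell$ copies of $E$ counted with multiplicity), and similarly for $\hat V(\mc{T}_j)$ with the $B_i$ and $F$; hence $\mc{N}\models\bigwedge_j\mc{S}_j=\mc{T}_j$ holds exactly when, in $\mc{W}$,
\[\innerprod{A_1,\ldots,A_k,\underbrace{E,\ldots,E}_{\ell}} =_0 \innerprod{B_1,\ldots,B_k,\underbrace{F,\ldots,F}_{\ell}}.\]
Likewise $\mc{N}\models\bigwedge_i\abs{s_i}\geq\abs{t_i}$ translates to $\abs{A_i}\geq\abs{B_i}$ for each $i$.

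The second step is to turn the conjunct $\theta(f;u_\sigma:\sigma\in T)$, together with $\bigwedge_i\abs{s_i}\leq\abs{u_\varnothing}$, into the covering hypothesis of Theorem~\ref{thm:CGFC}. Here I would argue by induction up the finite full binary tree $T$ from the leaves to the root: for each $\sigma\in T$, if $n_\sigma$ denotes the number of leaves of $T$ at or below $\sigma$, then $\abs{U_\sigma}\leq\abs{n_\sigma\times F}$. At a leaf $\sigma$ this is just the clause $\abs{U_\sigma}\leq\abs{F}$ of $\theta$, with $n_\sigma=1$. At a non-leaf $\sigma$, $\theta$ gives $\abs{U_\sigma}\leq\abs{U_{\sigma0}\cup U_{\sigma1}}$, and since any union of two sets injects into their disjoint union, $U_{\sigma0}\cup U_{\sigma1}$ injects into $U_{\sigma0}\sqcup U_{\sigma1}$, which by the inductive hypothesis injects into $(n_{\sigma0}\times F)\sqcup(n_{\sigma1}\times F)$, a set in bijection with $n_\sigma\times F$ where $n_\sigma=n_{\sigma0}+n_{\sigma1}$. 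Taking $\sigma=\varnothing$ and $n$ equal to the number of leaves of $T$, transitivity of the existence of injections gives $\abs{A_i}\leq\abs{U_\varnothing}\leq\abs{n\times F}$ for every $i$. Now every hypothesis of Theorem~\ref{thm:CGFC} is in force inside $\mc{W}$, so it delivers $\abs{E}\leq\abs{F}$, i.e.\ $\mc{W}\models\abs{\hat V(e)}\leq\abs{\hat V(f)}$, i.e.\ $\mc{N}\models\abs{e}\leq\abs{f}$, which completes the argument. I do not expect any genuine obstacle: all the mathematical substance is packaged in Theorem~\ref{thm:CGFC}, and what remains is translation bookkeeping, the only two points meriting care being the tree induction just described and the verification that the terms $\mc{S}_j,\mc{T}_j$ faithfully encode $=_0$ with the $\ell$ copies of $e$ and $f$ accounted for.
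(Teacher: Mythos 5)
Your proposal is correct and takes essentially the same approach as the paper's proof, which simply observes that the only nontrivial axiom to verify is \ref{axiom:CGFC} and that its validity is precisely the content of Theorem~\ref{thm:CGFC}. The one piece of substance you add --- the induction up the tree $T$ showing $\abs{U_\varnothing}\leq\abs{n\times F}$ for $n$ the number of leaves --- is exactly the translation the paper leaves implicit in its gloss on $\theta(f;u_\sigma:\sigma\in T)$, and it is carried out correctly (only finitely many injections are chosen at each step, so no choice is needed).
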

\begin{proof}
Our main task is to show that axiom schema \ref{axiom:CGFC} is valid. This is the content of Theorem $\ref{thm:CGFC}$.
\end{proof}


\section{Probability measures models for imprecise probability}\label{sec:prop-measures-models}

In this section, we introduce models for imprecise probability. These are models for reasoning about comparisons between the probabilities of events in a situation where there is uncertainty about the true probabilities. In a model, we represent this by having a set of probability measures, and say that one event is more likely than another (written $E \succsim F$) if it is more likely according to every probability measure.
\begin{definition}
	A \textit{probability measures model} is a triple $\innerprod{W,P,V}$ such that $W$ is a set of states, $P$ is a set of finitely-additive probability measures on $\pset{W}$, and $V : \Phi \to \pset{W}$, where $\Phi$ is as in Definition \ref{def:language}. We define, for any $E,F \in \pset{W}$, \[\ E \precsim F \iff \forall \mu \in P,\ \mu(E) \leq \mu(F).\]
\end{definition}
It was shown by Alon and Heifetz \cite{AlonHeifetz} (using a representation theorem of Alon and Lehrer \cite{AL14} and also independently Rios Insua \cite{Insua92}) that \DedFinLogic (which they thought of as a logic of probabilistic reasoning) is sound and complete with respect to such models. We will use this completeness result as a step in our completeness result, by showing that one can transform a probability measures model into a Dedekind-finite pure cardinality model.

However, we will want our measures to take whole number values. Given a probability measures model, we can adjust the measures slightly to take rational values, and then clear denominators to obtain a $\mathbb{N}_{\geq 0}$-valued measure. This process does not change any of the comparisons between events. Thus probability measures models and the finitary measures models defined as follows are essentially equivalent.

\begin{definition}
	A \textit{finitary measures model} is a triple $\innerprod{W,P,V}$ such that $W$ is a set of states, $P$ is a set of finitely-additive measures on $\pset{W}$ taking values in $\mathbb{N}_{\geq 0}$, and $V : \Phi \to \pset{W}$. We define, for any $E,F \in \pset{W}$, \[\ E \precsim F \iff \forall \mu \in P,\ \mu(E) \leq \mu(F).\]
\end{definition}

The semantics for these models are defined very similarly to the semantics in Definition \ref{defn-of-model} for our cardinality models.

\begin{definition}
	Given a finitary (or probability) measures model $\mathcal{N} = \innerprod{W,P,V}$, we define a function $\hat{V}$, which assigns to each set term a set in $\pset{W}$, by:
	\begin{itemize}
		\item $\hat{V}(a) = V(a)$ for $a\in \Phi$
		\item $\hat{V}(u^c) = W - \hat{V}(u)$
		\item $\hat{V}(u\cap v) = \hat{V}(u)\cap\hat{V}(v)$
	\end{itemize}
	We then define a satisfaction relation $\models$ with the usual clauses for propositional variables and Boolean connectives, plus: $\mc{N} \models \abs{v} \leq \abs{u}$ if and only if $(\hat{V}(v)) \precsim (\hat{V}(u))$.
\end{definition}

The usual language for probability comparison uses $E \precsim F$ instead of $|E| \leq |F|$, but this is purely a notational difference.
\begin{theorem}[Alon and Heifetz \cite{AlonHeifetz}, using Alon and Lehrer \cite{AL14} and Rios Insua \cite{Insua92}]\label{sound-and-complete:prob}
	\DedFinLogic is sound and complete with respect to probability measures models (and hence with respect to finitary measures models).
\end{theorem}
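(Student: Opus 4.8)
The plan is to handle soundness by a one-line measure-theoretic computation and to obtain completeness from the representation theorem for imprecise probability of Alon and Lehrer \cite{AL14} (equivalently Rios Insua \cite{Insua92}), which is the variant of the Kraft--Pratt--Seidenberg theorem (Theorem \ref{thm:KPS-representation}) in which Totality is dropped and a \emph{set} of probability measures is produced rather than a single one.

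For soundness, the axioms \ref{axiom:substitutions}--\ref{axiom:reflexivity} and the rules (R1), (R2) are immediate from the semantics. For \ref{axiom:GFC}, fix a probability measures model $\innerprod{W,P,V}$ and a measure $\mu \in P$. The hypothesis $\bigwedge_{i=1}^k \mc{S}_i = \mc{T}_i$ says precisely that the two sequences $\innerprod{\hat V(s_1),\ldots,\hat V(s_k),\hat V(e),\ldots,\hat V(e)}$ and $\innerprod{\hat V(t_1),\ldots,\hat V(t_k),\hat V(f),\ldots,\hat V(f)}$, each with $\ell$ copies of the last term, are balanced; summing $\mu$ across each side then gives $\sum_i \mu(\hat V(s_i)) + \ell\,\mu(\hat V(e)) = \sum_i \mu(\hat V(t_i)) + \ell\,\mu(\hat V(f))$, and together with $\mu(\hat V(s_i)) \geq \mu(\hat V(t_i))$ for every $i$ this forces $\mu(\hat V(e)) \leq \mu(\hat V(f))$. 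Since $\mu \in P$ was arbitrary, $\hat V(e) \precsim \hat V(f)$, which is \ref{axiom:GFC}; non-triviality, positivity, and transitivity are similarly routine. (This is, in effect, the soundness direction of the logic \textsf{IP} of Alon and Heifetz \cite{AlonHeifetz}, restricted to the non-nested fragment.)

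For completeness, suppose $\varphi$ is \DedFinLogic-consistent. Let $\Phi_0$ be the finite set of set labels occurring in $\varphi$, let $B$ be the finite Boolean algebra freely generated by $\Phi_0$, and let $W$ be its atom set, so that every set term over $\Phi_0$ names an element of $B$. Since \DedFinLogic contains all propositional tautologies and modus ponens, a standard Lindenbaum argument extends $\varphi$ to a maximal \DedFinLogic-consistent set $\Gamma$ of $\mathcal L$-formulas over $\Phi_0$; by rule (R2), $(\abs{\es} \geq \abs{t}) \in \Gamma$ whenever $t = 0$ is a Boolean identity, so the relation $\cge$ on $B$ given by $a \cge b :\iff (\abs{a} \geq \abs{b}) \in \Gamma$ is well defined on equivalence classes of terms. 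One then checks that $\cge$ satisfies all the hypotheses of the Alon--Lehrer representation theorem: not $\bot \cge \top$ from \ref{axiom:non-triviality}; $b \cge \bot$ from \ref{axiom:positivity}; reflexivity and transitivity from \ref{axiom:reflexivity}--\ref{axiom:transitivity}; and the generalized finite cancellation condition from the schema \ref{axiom:GFC}, using that two sequences of elements of $B$ are atomically balanced exactly when the conjunction $\bigwedge_i \mc{S}_i = \mc{T}_i$ of the corresponding Boolean identities is a theorem, hence lies in $\Gamma$ — and no Totality is needed. The representation theorem yields a set $P$ of finitely-additive probability measures on $B$ with $a \cge b \iff (\forall \mu \in P)\,\mu(a) \geq \mu(b)$. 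Regarding each $\mu$ as a measure on $\pset W$ and setting $V(a) = \set{w \in W : w \leq a}$ for $a \in \Phi_0$ (with $V$ arbitrary on the remaining labels) gives a probability measures model $\mc N = \innerprod{W,P,V}$ in which $\hat V$ realizes the identification of terms with elements of $B$; hence $\mc N \models \abs{t} \geq \abs{s}$ iff $(\abs{t} \geq \abs{s}) \in \Gamma$, and by induction on propositional structure $\mc N \models \psi \iff \psi \in \Gamma$ for all $\psi$, so $\mc N \models \varphi$. The parenthetical claim for finitary measures models follows by perturbing each $\mu \in P$ to rational values without changing any of the finitely many relevant comparisons and then clearing a common denominator, as noted before the theorem.

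The only substantive ingredient is the Alon--Lehrer representation theorem itself, used as a black box; given it, the work is entirely the translation between the axioms of \DedFinLogic and the hypotheses of that theorem, and the one place requiring care is verifying that rule (R2) together with the tautology schema force the Lindenbaum theory $\Gamma$ to respect Boolean equivalence of terms, so that $\cge$ is a genuine relation on $B$ and the two notions of ``balanced'' coincide.
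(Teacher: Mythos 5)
The paper offers no proof of this theorem: it is imported wholesale from Alon and Heifetz, with the passage to finitary measures models justified only by the preceding remark about perturbing each measure to rational values and clearing denominators. Your reconstruction --- soundness by summing a fixed $\mu\in P$ over a balanced pair of sequences, completeness via a Lindenbaum extension followed by the Alon--Lehrer/Rios Insua representation theorem on the free finite Boolean algebra --- is correct and is precisely the argument the citation points to, so there is nothing to compare it against in the paper itself. One small correction to the step you flag as delicate: the congruence making $\cge$ well defined on $B$ does not follow from (R2) and the tautology schema alone, since (R2) only yields $\abs{\es}\geq\abs{u}$ for Boolean-zero terms $u$; to transfer $\abs{t}\geq\abs{s}$ to a Boolean-equivalent $t'$ one applies the instance of \ref{axiom:GFC} with $k=\ell=1$ to the balanced pair $\la t,s\ra =_0 \la s,t'\ra$ (whose antecedent $\bigwedge_i \mc{S}_i=\mc{T}_i$ is a theorem by (R2)), and symmetrically for the other argument.
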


When dealing with Dedekind-infinite sets, we will make use of models where the measures can take value $\infty$. We put $\infty \leq \infty$, so that if $\mu(E) = \infty$ and $\mu(F) = \infty$, then $\mu(E) \leq \mu(F)$. These have no obvious interpretation in terms of imprecise probability and, as far as we know, have not appeared in the literature.

\begin{definition}
	An \textit{infinitary measures model} is a triple $\innerprod{W,P,V}$ such that $W$ is a set of states, $P$ is a set of finitely-additive measures on $\pset{W}$ taking values in $\mathbb{N}_{\geq 0} \cup \{\infty\}$, and $V : \Phi \to \pset{W}$. We define, for any $E,F \in \pset{W}$, \[\ E \precsim F \iff \forall \mu \in P,\ \mu(E) \leq \mu(F).\]
	We define the satisfaction relations as for finitary measures models.
\end{definition}

We will show that \CardCompLogic is sound and complete with respect to infinitary measures models. As part of the proof, we will make use of the representation theorem of Kraft, Pratt, and Seidenberg we quoted above as Theorem \ref{thm:KPS-representation}.

\begin{theorem}\label{sound-and-complete:inf}
	\CardCompLogic is sound and complete with respect to infinitary measures models.
\end{theorem}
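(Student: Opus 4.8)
\textbf{Proof plan for Theorem \ref{sound-and-complete:inf}.}

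The plan is to prove soundness and completeness separately, with completeness being the substantive direction. For soundness, I would show that each axiom of \CardCompLogic\ is valid in every infinitary measures model. The Boolean and order axioms (\ref{axiom:non-triviality}--\ref{axiom:transitivity}) are routine, so the work is in verifying \ref{axiom:CGFC}. Given a balanced pair of term-sequences, witnesses $\abs{s_i}\le\abs{u_\varnothing}$, and the tree data $\theta(f;u_\sigma:\sigma\in T)$, one fixes an arbitrary measure $\mu\in P$. The tree condition gives $\mu(u_\varnothing)\le 2^{\mathrm{depth}(T)}\mu(f)$ by induction up the tree, in particular $\mu(s_i)\le n\mu(f)$ for $n=2^{\mathrm{depth}(T)}$; if $\mu(f)=\infty$ there is nothing to prove, so assume $\mu(f)<\infty$, whence every $\mu(s_i),\mu(t_i)$ is finite. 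Balancedness gives $\sum_i \mu(s_i) + \ell\,\mu(e) = \sum_i \mu(t_i) + \ell\,\mu(f)$ (as a finite-valued identity once we know $\mu(e)$ is what we are bounding --- more carefully, if $\mu(e)=\infty$ then the left side is $\infty$ while the right is finite, a contradiction, so $\mu(e)<\infty$ too), and then $\mu(s_i)\ge\mu(t_i)$ termwise gives $\ell\,\mu(e)\le\ell\,\mu(f)$, hence $\mu(e)\le\mu(f)$. Since $\mu$ was arbitrary, $\hat V(e)\precsim\hat V(f)$.

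For completeness, suppose $\varphi$ is consistent with \CardCompLogic; I want an infinitary measures model satisfying $\varphi$. Work in the finite Boolean algebra $B$ generated by the finitely many set labels occurring in $\varphi$, with atoms $\beta_1,\dots,\beta_N$. From a maximal consistent extension of $\{\varphi\}$ (relative to the formulas over these labels) read off the relation $\cge$ on $B$ given by $a\cge b \iff \abs{a}\ge\abs{b}$ is in the set. The idea is to decompose $B$ into "Archimedean classes": say atoms $\beta,\beta'$ are equivalent if each is $\le$ some finite multiple of the other (formally, witnessed by tree-data in the sense of $\theta$). This partitions the atoms into classes $C_1,\dots,C_r$; let $D_j$ be the join of the atoms in $C_j$, and order the classes so that $D_1$ is the "smallest" Archimedean class and $D_r$ the largest. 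On each class, I claim the restricted relation $\cge$ satisfies the hypotheses of the Kraft--Pratt--Seidenberg representation theorem (Theorem \ref{thm:KPS-representation}) \emph{after} one adds totality within the class: non-triviality and positivity are inherited; transitivity holds; atomic-balance cancellation within a class is exactly \ref{axiom:CGFC} applied with all the $s_i,t_i,e,f$ supported on $D_j$ and the tree witnesses available because we are inside a single Archimedean class; totality within a class must be derived, using that incomparable elements of the same Archimedean class would violate \ref{axiom:CGFC} (this is the ZF analogue of the fact that Dedekind-finite sets in the same Archimedean class are comparable). Applying Theorem \ref{thm:KPS-representation} on each class yields an $\mathbb N_{\ge0}$-valued finitary measure $\mu_j$ on $B\res D_j$ representing $\cge$ there. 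Now assemble: for each $j$, extend $\mu_j$ to a measure on all of $B$ by giving atoms in classes \emph{lower} than $C_j$ measure $0$ and atoms in classes \emph{higher} than $C_j$ measure $\infty$. Let $P=\{\mu_1,\dots,\mu_r\}$. Take $W$ to be the set of atoms, $V$ the obvious valuation, and check that for $a,b\in B$, $a\cge b$ iff $\mu_j(a)\ge\mu_j(b)$ for all $j$: if $a$ and $b$ have their top-most nonzero Archimedean class equal, say $C_j$, the measures $\mu_{j'}$ for $j'<j$ give both value $\infty$, those for $j'>j$ give both $0$, and $\mu_j$ sees exactly the class-$C_j$ parts and decides the comparison correctly; if their top-most classes differ, the larger one dominates under every $\mu_{j}$ that can see it, matching the CGFC-derived fact that a larger Archimedean class strictly dominates. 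Hence $\innerprod{W,P,V}\models\varphi$.

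The main obstacle is the Archimedean-class analysis in the completeness direction: one must prove, purely from the axioms of \CardCompLogic, (i) that the Archimedean-equivalence relation on atoms is genuinely an equivalence relation and induces a \emph{linear} order on classes, (ii) that $\cge$ is total \emph{within} each class, and (iii) that an element whose support meets a strictly higher class is strictly larger --- all of which are the syntactic shadows of familiar ZF facts about division-by-$m$ and Archimedean comparability, but here must be extracted as derivations using the tree-indexed form of \ref{axiom:CGFC}. A secondary subtlety is bookkeeping: \ref{axiom:CGFC} as stated requires the covering witnesses $u_\sigma$ to be terms in the language, so when I invoke "CGFC within class $C_j$" I must exhibit explicit tree witnesses built from the $D_{j'}$'s (or their atoms), and check the balanced-sequence hypothesis $\bigwedge \mc S_i = \mc T_i$ transfers correctly under restriction to a class. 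Once these structural facts are in place, the representation theorem of Kraft--Pratt--Seidenberg does the real measure-theoretic work, and the $0/\infty$-padding is a routine gluing step.
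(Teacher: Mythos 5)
Your soundness argument is fine and is essentially the paper's (the paper relegates it to a footnote with the same case split on whether $\mu(f)=\infty$). The completeness direction, however, has a genuine gap: items (i) and (ii) of your Archimedean-class analysis are not derivable from \CardCompLogic, and in fact they are false in the intended semantics, so no syntactic derivation can supply them. The logic deliberately omits Totality, and Theorem \ref{thm:Jech} shows that incomparability is realizable; concretely, one can have two sets $A_1,A_2$ of incomparable cardinality both contained in (hence covered by) $A_1\cup A_2$, so $\cge$ need not be total within a single Archimedean class, and distinct classes need not be linearly ordered either. This is fatal to your construction as stated: you produce exactly one finitary measure $\mu_j$ per class, padded by $0$ and $\infty$ elsewhere, and since comparison of the values of a single measure is total, such a family can only represent relations that \emph{are} total within each class. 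It therefore cannot model a consistent theory containing $\neg(\abs{a}\geq\abs{b})\wedge\neg(\abs{b}\geq\abs{a})$ for $a,b$ in the same class, which \CardCompLogic permits.

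The paper's proof indexes the measures by failed comparisons rather than by classes. For each $a$ and each $b$ in the ideal $B_a$ (the smallest ideal containing $a$ and closed downward under $\cle$) with $a\not\cge b$, it first closes the restriction of $\cge$ to $B_a$ under GFC (legitimate because, inside $B_a$, every instance of GFC is an instance of CGFC, so $a\not\cge b$ survives), and then extends to a \emph{total} preorder $\cge^*$ on $B_a$ still satisfying GFC and still refuting $a\cge^* b$. This is a Szpilrajn-style extension, adding one pair $c,d$ at a time; the key lemma is that if both orientations $c\cge d$ and $d\cge c$ would force $a\cge b$ via GFC, the two derivations can be spliced into a single GFC instance over the unextended order, contradicting $a\not\cge b$. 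Theorem \ref{thm:KPS-representation} then yields a finitary measure on $B_a$ with $\mu(a)<\mu(b)$, which is extended by $\infty$ off $B_a$; auxiliary measures $\rho_a$ taking only the values $0$ on $B_a$ and $\infty$ off $B_a$ handle comparisons across ideals. Your instinct to combine KPS with $0/\infty$ padding is correct, but the extension-to-a-total-preorder step --- one extension per non-comparison, not one measure per Archimedean class --- is the essential missing ingredient.
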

\begin{proof}
    Soundness is easy to see. The only non-trivial to check is CGFC, but this is easy to check.\footnote{Given a measure $\mu \in P$, the covering condition in CGFC$_{k,l,T}(s_1,\ldots,s_k,e; t_1,\ldots,t_k,f;u_\sigma : \sigma \in T)$ essentially says that if $\mu(s_i) = \infty$ for any $i$, then $\mu(f) = \infty$ as well, in which case the conclusion of CGFC clearly holds; otherwise, if each $\mu(s_i)$ is finite, checking CGFC is essentially the same as checking GCF.} We now prove completeness. Given a finite consistent set of formulas, over a finite set $\Phi$ of set labels, we can extend this set of a formulas to a complete set of formulas. Thus we may assume that we have a Boolean algebra $B$ together with a partial pre-ordering $\cge$ on $B$ satisfying non-triviality (not $\bot \cge \top$), positivity (for all $b \in B$, $b \cge \bot$), and
    \begin{itemize}
        \item (CGFC) for any two sequences of elements $a_1, a_2, \dots , a_n,\underbrace{e,\ldots,e}_{m}$ and $b_1, b_2,
			\dots , b_n,\underbrace{f,\ldots,f}_{m}$ from $B$ of equal length, if
			\begin{enumerate}
			    \item the sequences are atomically balanced,
       \[ \la a_1, a_2, \dots , a_n,\underbrace{e,\ldots,e}_{m} \ra =_0 \la b_1, b_2,
			\dots , b_n,\underbrace{f,\ldots,f}_{m} \ra\]
   (recalling that this means that every atom of $B$ is below, in the
			order of the Boolean algebra, exactly as many $a$'s and $e$'s as $b$'s and $f$'s, and counting multiplicity),
			    \item $a_i
			\cge b_i$ for all $i \in\{1, \dots , n\}$, and
			    \item there is a finite full binary tree $T$ and elements $\{c_\sigma : \sigma \in T\}$ such that
			    \begin{enumerate}
			        \item for each $i$, $a_i \cle c_\varnothing$,
			        \item for each non-leaf $\sigma$, $c_\sigma \cle c_{\sigma 0} \vee c_{\sigma 1}$, and
			        \item for each leaf $\sigma$, $c_\sigma \cle f$,
			    \end{enumerate}
			\end{enumerate}
			then $f \cge e$.
    \end{itemize}
    (Note that one consequence of CGFC is that if $a \leq b$ in the Boolean algebra, then $a \cle b$.) We must show that there is a set of $\infty$-valued measures $\Phi$ such that $a \cge b$ if and only if for all $\mu \in \Phi$, $\mu(a) \geq \mu(b)$.
    
	Given $a \in B$, let $B_a$ be the smallest ideal of $B$ which contains $a$ and is closed downwards under $\cle$. Then $B_a$ is exactly the ideal consisting of elements $b \in B$ such that there is a finite full binary tree $T$ and elements $\{c_\sigma : \sigma \in T\}$ such that
	\begin{enumerate}
		\item $b \cle c_\varnothing$,
		\item for each non-leaf $\sigma$, $c_\sigma \cle c_{\sigma 0} \vee c_{\sigma 1}$, and
		\item for each leaf $\sigma$, $c_\sigma \cle a$,
	\end{enumerate}
	We can think of $B_a$ as a Boolean algebra in its own right (though it may not be a subalgebra of $B$ as it may not have the same top elements).
	
	Now let $b \in B_a$ be such that $a \not\cge b$. We will show that $\cge$ can be extended to a total ordering $\cge^*$ on $B_a$ satisfying the conditions of Theorem \ref{thm:KPS-representation}, and such that $a \not\cge^* b$.
	
	\begin{claim}
	    There is a total ordering $\cge^*$ on $B_a$ satisfying the conditions of Theorem \ref{thm:KPS-representation}, and such that $a \not\cge^* b$.
	\end{claim}
	\begin{proof}
	    First, let $\cge_0$ be the closure of $\cge$ under GFC. This remains a partial pre-order, and still satisfies reflexivity and positivity. We will argue that $a \not\cge_0 b$, which also implies non-triviality. If we had $a \cge_0 b$, this would be because there is, in $B_a$ with the ordering $\cge$, an instance of GFC 
     \[(x_1,\ldots,x_n,a,\ldots,a) =_0 (y_1,\ldots,y_n,b,\ldots,b)\]
	    with $y_i \cge x_i$. (Note that any chain of reasoning using multiple instances of GFC can be combined into a single instance; thus, if  $a \cge_0 b$, it follows from a single instance of GFC.) Because each $x_i$ is in $B_a$, this is actually an instance of CGFC, and so it would be the case that $a \cge b$ in $B_a$. As this is not the case, we conclude that $a \not\cge_0 b$.
	
	    Now, one by one, we consider each pair of elements $c,d$ and extend our current partial pre-order $\cge_s$ to a new partial pre-order $\cge_{s+1}$ such that either $c \cge_{s+1} d$ or $d \cge_{s+1} c$. We maintain throughout the fact that $\cge_{s+1}$ satisfies GFC and has $a \not\cge_s b$. Thus, after considering all pairs of elements $c,d$, we will obtain a total pre-order $\cge^*$ satisfying GFC and such that a $\not\cge^* b$.
	
	    Considering the pair $c,d$, to show that we can extend $\cge_s$ to $\cge_{s+1}$, we argue by contradiction. We can consider the two partial pre-orders $\cge'$ and $\cge''$ obtained by adding to $\cge_s$ that $c \cge' d$ and that $d \cge'' c$ respectively, and then closing under GFC. If we have both that $a \cge' b$ and that $a \cge'' b$, then these follow from instances of GFC. First, $a \cge' b$ follows from an instance of GFC consisting of atomically balanced sequences
	    \[\la \underbrace{c,\ldots,c}_{\ell_1},x_1,\ldots,x_{n_1},\underbrace{b\ldots,b}_{m_1} \ra =_0 \la \underbrace{d,\ldots,d}_{\ell_1},y_1,\ldots,y_{n_1},\underbrace{a,\ldots,a}_{m_1} \ra \]
	    with $x_i \cge y_i$. Second, $a \cge'' b$ follows from an instance of GFC consisting of atomically balanced sequences
	    \[\la \underbrace{d,\ldots,d}_{\ell_2},u_1,\ldots,u_{n_2},\underbrace{b\ldots,b}_{m_2} \ra =_0 \la \underbrace{c,\ldots,c}_{\ell_2},v_1,\ldots,v_{n_2},\underbrace{a,\ldots,a}_{m_2} \ra \]
	    such that $u_i \cge v_i$. But then we can take $\ell_2$ copies of the first instance of GFC and $\ell_1$ copies of the second instance and combine them into a single instance of GFC. Cancelling out $\ell_1 \ell_2$ $c$'s and $d$'s from each side, we get an instance of GFC of the form
     \begin{align*}
         \la \underbrace{x_1,\ldots,x_1}_{\ell_2},\ldots,&\underbrace{x_{n_1},\ldots,x_{n_1}}_{\ell_2},\underbrace{u_1,\ldots,u_1}_{\ell_1},\ldots,\underbrace{u_{n_2},\ldots,u_{n_2}}_{\ell_1},\underbrace{b\ldots,b}_{\ell_2 m_1 + \ell_1 m_2} \ra 
         \\     
         &=_0 \la \underbrace{y_1,\ldots,y_1}_{\ell_2},\ldots,\underbrace{y_{n_1},\ldots,y_{n_1}}_{\ell_2},\underbrace{v_1,\ldots,v_1}_{\ell_1},\ldots,\underbrace{v_{n_2},\ldots,v_{n_2}}_{\ell_1},\underbrace{a,\ldots,a}_{\ell_2 m_1+\ell_1 m_2} \ra.
     \end{align*}
     This is an instance of GFC using only $\cge_s$, from which we could conclude that $a \cge_s b$. But this is not the case, giving a contradiction. Thus the desired extension $\cge_{s+1}$ must exist.
	
	    By repeatedly applying this argument, we conclude that $\cge$ can be extended to a total ordering $\cge^*$ on $B_a$, satisfying GFC, such that $a \not\cge b$.
	\end{proof}
	
	Then by Theorem \ref{thm:KPS-representation} there is a measure $\mu$ on $B_a$ such that 
	\[x \cge^* y \; \Longleftrightarrow\; \mu(x) \ge \mu(y).\]
	In particular, $\mu(a) < \mu(b)$. We can extend $\mu$ to all of $B$ by setting it to be $\infty$ on $B - B_a$. Since this measure $\mu$ was chosen specifically for $a$ and $b$, call it $\mu_{a,b}$.
	
	Also, for each $a$, let $\rho_a$ be the measure on $B$ defined to be $0$ on $B_a$, and $\infty$ on $B - B_a$. Then let
	\[ \Phi = \{\mu_{a,b} : b \in B_a, \; a \not \cge b\} \cup \{\rho_a: a \in B\}.\]
	Then $x \cge y$ if and only if, for all $\mu \in \Phi$, $\mu(x) \geq \mu(y)$.
\end{proof}

\section{Urelements and Permutation Models}\label{sec:urelements}

Another intermediate step in our completeness proofs will be to consider set theory with urelements.\footnote{We use the term urelement rather than the more common atom because we will reserve atom to refer to elements of a Boolean algebra} We use $\ZFA$ to refer the Zermelo-Frenkel set theory with urelements. We will generally use the symbol $A$ to refer to the set of urelements. The axioms of $\ZFA$ are the same as the axioms of $\ZF$, modified in the natural way to accommodate urelements; for example, modifying extensionality to apply only to sets and not to urelements (see Chapter 4 of \cite{Jech73} for details).

Our use of urelements will be in the form of permutation models (see Definition \ref{def:permutation models}). The intuition for these is as follows. We begin with a model of $\ZFA + \AC$. This model contains certain sets which distinguish between the different atoms, for example, the set of all atoms can be split into two disjoint infinite subsets $A = A_1 \sqcup A_2$. Now we think of ourselves as being unable to distinguish between different atoms, and we throw away all sets, such as $A_1$ and $A_2$, which can only be defined by distinguishing between atoms. Thus we get a submodel of the original model which is a model of $\ZFA$ but usually not of $\AC$. Permutation models were originally introduced by Fraenkel to show, about 40 years before Cohen's proof of the independence of the Axiom of Choice for $\ZF$, that the Axiom of Choice was not provable in $\ZFA$ (see Chapter 4 of \cite{Jech73} for details).

In the definitions below we will need the cumulative hierarchy $\mc{P}^\alpha(S)$ above a given set $S$, defined inductively as follows
\begin{align*}
	\mc{P}^0(S) &= S,\\
	\mc{P}^{\alpha+1}(S) &= \mc{P}^{\alpha}(S) \cup \pset{\mc{P}^{\alpha}(S)},\\
	\mc{P}^\lambda(S) &= \bigcup_{\beta<\lambda} \mc{P}^\beta(S) \ \ \ \ \text{($\lambda$ limit)}
\end{align*}
A model of $\ZFA$ has a \textit{kernel}, namely that part of the model consisting only of pure sets: $\mc{P}^\infty(\varnothing)$.

\begin{definition}[Permutation Models, see section 4.2 of \cite{Jech73}]\label{def:permutation models}
	Consider set theory with urelements and let $A$ be the set of urelements. Let $\pi$ be a permutation of the set $A$. Using the hierarchy of $\mc{P}^\alpha(A)$'s, we can extend $\pi$ (either by $\in$-recursion or by recursion on the rank of $x$) to act on arbitrary sets $x$ as as follows:
	\[\pi(\es) = \es, \ \pi(x) = \pi[x] = \set{\pi(y) : y \in x}. \]
	Then $\pi$ becomes an \textit{$\in$-automorphism} of the universe and one can easily verify the following facts about $\pi$:
	\begin{itemize}
		\item[(a)] $x\in y \iff \pi x \in \pi y$;
		\item[(b)] $\phi(x_1,\ldots,x_n) \iff \phi(\pi x_1,\ldots,\pi x_n)$;
		\item[(c)] rank($x$) = rank($\pi x$);
		\item[(d)] $\pi\set{x,y} = \set{\pi x, \pi y}$ and $\pi(x,y) = (\pi x, \pi y)$;
		\item[(e)] If $R$ is a relation then $\pi R$ is a relation and $(x,y)\in R \iff (\pi x,\pi y) \in \pi R$;
		\item[(f)] If $f$ is a function on $X$ then $\pi f$ is a function on $\pi X$ and $(\pi f)(\pi x) = \pi(f(x))$;
		\item[(g)] $\pi x = x$ for every $x$ in the kernel;
		\item[(h)] $(\pi \cdot \rho)x = \pi(\rho(x))$.
	\end{itemize}
	
	
	Let $\mc{G}$ be a group of permutations of $A$. A set $\mc{F}$ of subgroups of $\mc{G}$ is a \textit{normal filter} on $\mc{G}$ if for all subgroups $H,K$ of $\mc{G}$:
	\begin{itemize}
		\item[(i)] $\mc{G} \in \mc{F}$;
		\item[(ii)] if $H \in \mc{F}$ and $H \subseteq K$, then $K \in \mc{F}$;
		\item[(iii)] if $H \in \mc{F}$ and $K \in \mc{F}$, then $H\cap K \in \mc{F}$;
		\item[(iv)] if $\pi \in \mc{G}$ and $H \in \mc{F}$, then $\pi H \pi^{-1} \in \mc{F}$;
		\item[(v)] for each $a\in A$, $\set{\pi \in \mc{G} : \pi a = a} \in \mc{F}$.
	\end{itemize}
    For each $x$, let $\sym_{\mc{G}}(x) = \set{\pi \in \mc{G} : \pi x = x}$; note that $\sym_{\mc{G}}(x)$ is a subgroup of $\mc{G}$.
	
	Let $\mc{G}$ and $\mc{F}$ be fixed. We say that $x$ is \textit{symmetric} if $\sym_{\mc{G}}(x) \in \mc{F}$. The class \[\mc{U} = \set{x : x \text{ is symmetric and } x\subseteq \mc{U}}\]
	consists of all \textit{hereditarily symmetric} objects. So far $\mc{U}$ is just a class; however, we can prove that it is in fact a model of $\ZF$ (see Theorem 4.1 of \cite{Jech73}). We call $\mc{U}$ a \textit{permutation model}.
	
	For our purposes, it suffices to consider the following simple type of permutation models: Let $\mc{G}$ be a group of permutations of $A$. A family $I$ of subsets of $A$ is a \textit{normal ideal} if for all subsets $E,F$ of $A$:
	\begin{itemize}
		\item[(i)] $\es \in I$;
		\item[(ii)] if $E\in I$ and $F \subset E$, then $F \in I$;
		\item[(iii)] if $E \in I$ and $F \in I$, then $E \cup F \in I$;
		\item[(iv)] if $\pi \in \mc{G}$ and $E \in I$, then $\pi[E] \in I$;
		\item[(v)] for each $a \in A$, $\set{a} \in I$.
	\end{itemize}
    For each $x$, let $\fix_{\mc{G}}(x) = \set{\pi \in \mc{G} : \pi y = y \text{ for all } y \in x}$; note that $\fix_{\mc{G}}(x)$ is a subgroup of $\mc{G}$.
	
	Given a normal ideal $I$, let $\mc{F}$ be the filter on $\mc{G}$ generated by the subgroups of $\fix_{\mc{G}}(E), E\in I$. $\mc{F}$ is a normal filer, and so it defines a permutation model $\mc{U}$. Note that $x$ is symmetric if and only if there exists $S \in I$ such that \[\fix_{\mc{G}}(S) \subseteq \sym_{\mc{G}}(x).\] We say that $S$ is the \textit{support} of $x$.
\end{definition}

We now introduce urelement cardinality models, which function as the $\ZFA$ analogue of our pure cardinality models. In our completeness proof, we will use the technology of permutation models to build urelement cardinality models, which we will then transform into pure cardinality models using the Jech-Sochor Embedding Theorem (Theorem \ref{thm:first-embedding-thm}).

\begin{definition}
	An \textit{urelement cardinality model} is a quadruple $\mathcal{M} = \innerprod{\mathcal{U},X,\mathcal{F},V}$, where $\mathcal{U}$ is a model of $\ZFA$, $X$ is a non-empty set in $\mathcal{U}$, $\innerprod{X, \mathcal{F}}$ is a field of sets in $\mc{U}$, and $V : \Phi \to \mathcal{F}$. We say that an urelement cardinality model is a \textit{permutation urelement cardinality model} if the underlying model $\mc{U}$ of $\ZFA$ is a permutation model.\footnote{\label{footnote-ur}We could also ask for $X$ to be a non-empty set of urelements, but this adds unnecessary though ultimately trivial complications to some of our proofs, so we do not ask for this.}
\end{definition}

\begin{theorem}[Jech-Sochor Embedding Theorem; see Theorem 6.1 of \cite{Jech73}]\label{thm:first-embedding-thm}
	Let $\mc{Y}$ be a model of $\ZFA+\AC$, let $A$ be the set of all atoms of $\mc{Y}$, let $\mc{K}$ be the kernel of $\mc{Y}$ and let $\alpha$ be an ordinal in $\mc{Y}$. For every permutation model $\mc{U} \subseteq \mc{Y}$ (a model of $\ZFA$), there exists a symmetric extension $\mc{U}^* \supseteq \mc{K}$ (a model of $\ZF$) and a set $\tilde{A} \in \mc{U}^*$ such that
	\[(\mc{P}^{\alpha}(A))^{\mc{U}} \text{ is $\in$-isomorphic to } (\mc{P}^{\alpha}(\tilde{A}))^{\mc{U^*}}.\]
\end{theorem}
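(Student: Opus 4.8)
Since this is the classical Jech--Sochor theorem, the plan is to reproduce its standard symmetric-forcing proof rather than to find a new argument; I record it here for completeness. The idea is to replace each urelement $a\in A$ by a generic set $\tilde a$ living in a forcing extension of the kernel $\mc{K}$, chosen so that the combinatorial role played by the atoms (which permutations fix which hereditarily symmetric objects) is faithfully imitated by the $\tilde a$'s.

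First I would fix a regular cardinal $\kappa$ of $\mc{Y}$ large enough --- larger than the cardinality of $(\mc{P}^\alpha(A))^{\mc{Y}}$ suffices --- and work in $\mc{K}$ with the Cohen-type forcing $\mathbb{P}$ whose conditions are finite partial functions from $A\times\kappa$ to $2$, ordered by reverse inclusion. For a generic filter $G$ put $\tilde a=\set{\xi<\kappa:\ p(a,\xi)=1 \text{ for some }p\in G}$ and $\tilde A=\set{\tilde a:a\in A}$; genericity makes $a\mapsto\tilde a$ injective. The group $\mc{G}$ of permutations of $A$ acts on $\mathbb{P}$ by $(\pi p)(\pi a,\xi)=p(a,\xi)$, hence on $\mathbb{P}$-names; letting $\mc{F}'$ be the filter on this automorphism group generated by the subgroups $\fix_{\mc{G}}(E)$ for $E\in I$ and taking the resulting symmetric submodel $\mc{U}^*$ of $\mc{K}[G]$, the general theory of symmetric extensions gives that $\mc{U}^*\supseteq\mc{K}$ is a model of ZF.

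Next I would define $\Theta$ by $\in$-recursion, $\Theta(a)=\tilde a$ for $a\in A$ and $\Theta(x)=\set{\Theta(y):y\in x}$ otherwise, and show it restricts to an $\in$-isomorphism from the relative cumulative hierarchy $(\mc{P}^\alpha(A))^{\mc{U}}$ onto $(\mc{P}^\alpha(\tilde A))^{\mc{U}^*}$. That $\Theta$ maps into $\mc{U}^*$ and is an injective $\in$-embedding follows by induction on rank: if $x$ is hereditarily symmetric in $\mc{U}$ with support $S\in I$, a canonical $\mathbb{P}$-name for $\Theta(x)$ has stabilizer containing $\fix_{\mc{G}}(S)$, hence is symmetric. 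The surjectivity of $\Theta$ is the heart of the matter and the step I expect to be the main obstacle: one argues by induction on $\beta\le\alpha$ that any symmetric name forced to denote a subset of $\mc{P}^\beta(\tilde A)$, with support $E\in I$, actually names some $\Theta(x)$ with $x$ hereditarily symmetric of support $E$. This uses the homogeneity of $\mathbb{P}$ together with the largeness of $\kappa$: membership of each element of $\mc{P}^\beta(\tilde A)$ in the named set must be decided outright, because any ``extra'' generic information that could distinguish two of the $\tilde a$'s beyond what $\fix_{\mc{G}}(E)$ already respects is removed by applying a suitable automorphism fixing $E$ pointwise --- so the named set is pinned down by its $\fix_{\mc{G}}(E)$-orbit behaviour and therefore lies in the image of $\Theta$.

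Finally I would note the consequences needed downstream: as an $\in$-isomorphism of the relevant hierarchies that fixes the kernel, $\Theta$ transports the permutation model's verdicts on Boolean combinations, cardinality comparisons $\abs{s}\ge\abs{t}$, and Dedekind-finiteness among sets of bounded relative rank over to the symmetric extension, which is precisely what converts a permutation urelement cardinality model into a pure cardinality model. The only real bookkeeping is to (i) track the dictionary between the normal ideal $I$ on $A$ and supports of symmetric names in $\mc{U}^*$, and (ii) in the application, take $\alpha$ large enough that $X$, every member of $\mc{F}$, and all auxiliary witnesses sit inside $(\mc{P}^\alpha(A))^{\mc{U}}$.
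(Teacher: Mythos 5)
The paper does not actually prove this theorem: it is imported as a black box from Jech (Theorem 6.1 of \cite{Jech73}), so the only thing to measure your sketch against is Jech's own argument. Your overall architecture matches his --- force over the kernel, transfer the permutation group and its filter to automorphisms of names, define $\Theta$ by $\in$-recursion, and prove surjectivity by a homogeneity argument --- but there is a concrete gap in the step you yourself flag as the heart of the matter. Jech does not replace each atom $a$ by a single generic subset of $\kappa$; he replaces it by a $\kappa$-sized ``cloud'' $\tilde a=\set{x_{a\xi} : \xi<\kappa}$ of mutually generic sets (conditions are finite $0$--$1$ functions on $A\times\kappa\times\kappa$), and he enlarges the automorphism group beyond the lifts of $\mc{G}$ to include permutations of the $\xi$-coordinates within each cloud. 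Both features are load-bearing for surjectivity. With your poset the only automorphisms are those induced by permutations $\pi$ of $A$, and for $\pi\in\fix_{\mc{G}}(E)$ the conditions $p$ and $\pi p$ are in general incompatible (they may assign different values to the same pair $(b,\xi)$), so the conclusion ``membership is decided outright'' does not follow: from $p\Vdash \tilde a\in\sigma$ one cannot infer that $\tilde b\in\sigma$ is forced for $b=\pi(a)$. The clouds are precisely what repair this: any name for a single element of $\mc{P}^\alpha(\tilde A)$ mentions fewer than $\kappa$ of the coordinates $\xi$ --- this is where the hypothesis $\kappa>|\mc{P}^\alpha(A)|$ is actually deployed, a point your sketch asserts but never uses --- so the finitely many offending coordinates can be moved to fresh ones by an automorphism in the filter, yielding a compatible condition and forcing the decision.

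Two smaller points. First, your argument is written only for filters generated by a normal ideal $I$, whereas the statement (and Jech's theorem) allows an arbitrary normal filter; this restriction is harmless for the application in this paper, where the filter does come from the ideal of finite sets, but it should be flagged. Second, your closing paragraph about transporting Boolean structure, cardinality comparisons, and Dedekind-finiteness is not part of this theorem at all --- it is the content of Theorem \ref{thm:urelement-to-pure}, which the paper does prove in detail and which consumes the present theorem as stated.
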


\begin{theorem}\label{thm:urelement-to-pure}
        	For each permutation urelement cardinality model $\mathcal{M}$, there is a symmetric pure cardinality model $\mathcal{N}$ such that $\mathcal{M} \equiv_{\mc{L}} \mathcal{N}$. Moreover, if $\mathcal{M}$ is a Dedekind-finite permutation urelement  cardinality model, the we can choose $\mathcal{N}$ to be a Dedekind-finite pure cardinality model.
\end{theorem}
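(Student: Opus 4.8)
The plan is to apply the Jech--Sochor Embedding Theorem (Theorem \ref{thm:first-embedding-thm}) to the permutation model $\mc{U}$ underlying $\mc{M} = \innerprod{\mc{U},X,\mc{F},V}$ and then to push the field of sets and the valuation forward along the resulting $\in$-isomorphism. First I would fix a limit ordinal $\alpha$ large enough that $(\mc{P}^\alpha(A))^{\mc{U}}$ contains $X$, $\mc{F}$, $V$, $\omega$, every subset of $X$ in $\mc{U}$, and every function in $\mc{U}$ whose domain and range lie among $\omega$ and the subsets of $X$ (in particular every injection between subsets of $X$, which is a subset of $X\times X$, and every injection $\omega\to X$); since all of these have rank over $A$ bounded by $\operatorname{rank}_A(X)+\omega$ together with the ranks of $\mc{F},V,\omega$, any sufficiently large limit $\alpha$ works, and I will keep an extra $+\omega$ of slack for later. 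Applying Theorem \ref{thm:first-embedding-thm} with this $\alpha$ gives a symmetric extension $\mc{U}^* \supseteq \mc{K}$ (a model of ZF), a set $\tilde A \in \mc{U}^*$, and an $\in$-isomorphism $\pi \colon (\mc{P}^\alpha(A))^{\mc{U}} \to (\mc{P}^\alpha(\tilde A))^{\mc{U}^*}$; note $\pi(\omega)=\omega$ since $\pi$ preserves the $\in$-definition of the set of finite von Neumann ordinals, which is correct in both structures.

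Next I would set $\mc{N} = \innerprod{\mc{U}^*,\,\pi(X),\,\pi[\mc{F}],\,V'}$ with $V'(a) := \pi(V(a))$, where $\pi[\mc{F}] = \{\pi(F):F\in\mc{F}\} = \pi(\mc{F})$. Because $\pi$ is an $\in$-isomorphism between structures closed under the relevant low-rank constructions, $\pi(X)$ is nonempty and $\innerprod{\pi(X),\pi[\mc{F}]}$ is again a field of sets in $\mc{U}^*$, so $\mc{N}$ is a symmetric pure cardinality model. An easy induction on set terms $t$ — using that $\pi$ is an $\in$-isomorphism and that $X - \hat V(t)$ and $\hat V(t)\cap\hat V(s)$ lie in $(\mc{P}^\alpha(A))^{\mc{U}}$ — shows that the valuation $\widehat{V'}$ of $\mc{N}$ satisfies $\widehat{V'}(t) = \pi(\hat V(t))$ for every $t$. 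Since Boolean connectives are handled trivially, establishing $\mc{M} \equiv \mc{N}$ then reduces to showing, for subsets $Y,Z$ of $X$ in $(\mc{P}^\alpha(A))^{\mc{U}}$, that $\mc{U}\models\abs{Y}\geq\abs{Z}$ iff $\mc{U}^*\models\abs{\pi(Y)}\geq\abs{\pi(Z)}$.

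The forward direction is immediate: an injection $Z\to Y$ in $\mc{U}$ lies in $(\mc{P}^\alpha(A))^{\mc{U}}$ by the choice of $\alpha$, and $\pi$ carries it to an injection $\pi(Z)\to\pi(Y)$ in $\mc{U}^*$. The converse is the step I expect to require the most care: given an injection $g\colon\pi(Z)\to\pi(Y)$ in $\mc{U}^*$, one must check that $g$ already lies in $(\mc{P}^\alpha(\tilde A))^{\mc{U}^*}$, so that $\pi^{-1}(g)$ is a genuine injection $Z\to Y$ in $\mc{U}$. This follows from rank bookkeeping: $g \subseteq \pi(X)\times\pi(X)$, and since the $\in$-isomorphism preserves the level in the $\mc{P}^{\bullet}$-hierarchy, $\pi(X)$ sits low enough in $\mc{P}^{\bullet}(\tilde A)$ that $\pi(X)\times\pi(X)$ and all of its subsets lie in $\mc{P}^\alpha(\tilde A)$ — this is where the extra $+\omega$ of slack is used. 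Finally, for the Dedekind-finite case: if every $F\in\mc{F}$ is Dedekind-finite in $\mc{U}$, then $\mc{U}$ has no injection $\omega\to F$; were there an injection $\omega\to\pi(F)$ in $\mc{U}^*$, the same bookkeeping (now applied to $\omega\times\pi(X)$) places it in $(\mc{P}^\alpha(\tilde A))^{\mc{U}^*}$, and $\pi^{-1}$ transports it back to an injection $\omega\to F$ in $\mc{U}$, a contradiction. Hence each $\pi(F)$ is Dedekind-finite, so $\mc{N}$ is a Dedekind-finite pure cardinality model, completing the argument.
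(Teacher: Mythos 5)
Your proposal is correct and follows essentially the same route as the paper's proof: apply Jech--Sochor with an ordinal chosen high enough to capture all injections between subsets of $X$ (and $\omega$-sequences), push $X$, $\mc{F}$, and $V$ forward along the $\in$-isomorphism, transport injections forward directly, and use the level-preserving property of the isomorphism to pull injections (and $\omega$-sequences, for the Dedekind-finite clause) back. The only differences are cosmetic bookkeeping (the paper takes $\alpha = \beta+3$ where you keep $+\omega$ of slack), so there is nothing substantive to add.
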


Recall that $\mathcal{M} \equiv_{\mc{L}} \mathcal{N}$ means that $\mc{M}$ and $\mc{N}$ satisfy the same formulas. Thus we are showing that each permutation urelement cardinality model is equivalent to a symmetric pure cardinality model.

\begin{proof}
	Let $\mc{M} = (\mc{U},X,\mc{F},V)$ be a permutation urelement cardinality model. Thus $\mc{U}$ is a permutation model of $\ZFA$. We will construct a model $\mc{U}^*$ of $\ZF$ without atoms, a set $X^* \in \mc{U}^*$, and a field of sets $\mc{F}^*$ on $X^*$ such that there is a bijection $x \mapsto x^*$ from $X \to X^*$, inducing an isomorphism of Boolean algebras $Y \mapsto Y^*$ from $\mc{F} \to \mc{F}^*$. Moreover, this map will respect cardinality: we will have that $|E| \geq |F|$ in $\mc{U}$ if and only if $|E^*| \geq |F^*|$ in $\mc{U}^*$. Then, defining $V^*(p) = V(p)^*$, we will get that $\mc{N} = (\mc{U}^*,X^*,\mc{F}^*,V^*)$ satisfies $\mc{M} \equiv_{\mc{L}} \mc{N}$.
	
	Let $A$ be the set of all urelements of $\mc{U}$ and let $\mc{K}$ be the kernel of $\mc{U}$. Let $\beta$ be an ordinal in $\mc{U}$ which is sufficiently large that $X,\mc{F},\omega \in (\mc{P}^{\beta}(A))^{\mc{U}}$. Let $\alpha = \beta + 3$; note that for any $E,F \subseteq X$, and any injection $f: E \to F$, viewing $f$ as a set of ordered pairs we have that $f \in (\mc{P}^{\alpha}(A))^{\mc{U}}$. Then by The Jech-Sochor Embedding Theorem (Theorem \ref{thm:first-embedding-thm}), there is a model of $\ZF$, $\mc{U}^* \supseteq \mc{K}$, and a set $\tilde{A}$ in $\mc{U^*}$ such that \[(\mc{P}^{\alpha}(A))^{\mc{U}} \text{ is $\in$-isomorphic to } (\mc{P}^{\alpha}(\tilde{A}))^{\mc{U^*}}\]
	Let $\phi$ be this map. A vital property of $\phi$ which we will use implicitly throughout the rest of the proof is that, for any set $S$ in $(\mc{P}^{\alpha}(A))^{\mc{U}}$,
	\[ \phi(S) = \{\phi(x) : x \in S\}.\]
	That is, $\phi(S)$ cannot get any new elements which are not in the range of $\phi$. This relies not just on the fact that $\phi$ is an $\in$-isomorphism, but on the particular range of $\phi$. (Note that this fact does not apply to atoms $a$ of $\mc{U}$; in this case, $\phi(a)$ is a non-empty set.) Another property of $\phi$ is that it acts level-by-level: for each $\gamma \leq \alpha$, $\phi$ maps
	\[(\mc{P}^{\gamma}(A))^{\mc{U}} \longrightarrow (\mc{P}^{\gamma}(\tilde{A}))^{\mc{U^*}}.\]
	
	For each $x \in X$, define $x^*$ in $\mc{U}^*$ by $x^* = \phi(x)$. We let
	\[ X^* = \phi(X) = \{x^* : x \in X\}\]
	and for any $Y \subseteq X$, we have $Y^* \subseteq X^*$ defined by
	\[ Y^* = \phi(Y) = \{x^* : x \in Y\}.\]
	We also define
	\[\mc{F}^* = \phi(\mc{F}) = \set{Y^* : Y\in \mc{F}}.\]
	Note that $Y \mapsto Y^*$ is an isomorphism of Boolean algebras between the fields of sets $\mc{F}$ and $\mc{F}^*$.
	
	Now, suppose that $\abs{E}\geq \abs{F}$ in $\mc{U}$; that is, suppose that there is an injection $f : F \to E$. Then $\phi(f)$ will be an injection from $F^* = \phi(F)$ to $E^* = \phi(E)$ as
	\[ f(a) = b \Longleftrightarrow \phi(f)(\phi(a))=\phi(b).\]
	(Note that $f$ is in the domain of $\phi$ by choice of $\alpha$.) Thus $\abs{E^*}\geq \abs{F^*}$.
	
	Conversely, suppose that $\abs{E^*}\geq \abs{F^*}$; that is, suppose that there is an injection $g^* : F^* \to E^*$. First note that $g^*$ is in the range of $\phi$ because $E^*$ and $F^*$ are in $(\mc{P}^{\beta}(\tilde{A}))^{\mc{U^*}}$ (we use here the fact that $\phi$ maps level-by-level), and so $g^*$ is in $(\mc{P}^{\alpha}(\tilde{A}))^{\mc{U^*}}$. Then let $g$ be the preimage of $g^*$ under $\phi$, $g^* = \phi(g)$. Then, as $\phi$ is an $\in$-isomorphism, $g$ will be an injection from $E$ to $F$ and so $\abs{E} \geq \abs{F}$. 
	
	We now check the moreover clause. Suppose that $Y^*$ is Dedekind-infinite. Let $\mathbf{b} : \omega \to Y^*$ be an $\omega$-sequence in $Y^*$. Then $\mathbf{b} \in \mathcal{P}^\alpha(\tilde{A})^{\mathcal{U}^*}$. Since $\mathbf{b}$ is in the image of the $\in$-isomorphism  $\phi$, there is a corresponding $\omega$-sequence $\mathbf{a} : \omega \to Y$ such that $\phi(\mathbf{a}) = \mathbf{b}$. Thus $Y$ is Dedekind-infinite. By the contrapositive, we have that if $Y$ is Dedekind-finite, then $Y^*$ is Dedekind-finite.
\end{proof}
Note that each pure cardinality model is already an urelement cardinality model, and so we get:
\begin{corollary}\label{cor:urelement-same-as-pure}
	The logic of urelement cardinality models is the same as the logic of pure cardinality models. The same is true for Dedekind-finite urelement cardinality models and Dedekind-finite pure cardinality models.
\end{corollary}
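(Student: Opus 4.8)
The plan is to read ``the logic of a class of models'' as the set of $\mathcal{L}$-formulas valid over that class (in the sense of Definition~\ref{defn-of-model}), and to prove the two resulting inclusions separately. The inclusion of the logic of urelement cardinality models into the logic of pure cardinality models is the easy half, and is exactly the observation preceding the corollary: a pure cardinality model $\langle \mathcal{W},X,\mathcal{F},V\rangle$ \emph{is} an urelement cardinality model whose underlying model of ZFA has empty urelement set (a model of ZF is a model of ZFA together with ``there are no urelements''), and a Dedekind-finite pure cardinality model is in particular a Dedekind-finite urelement cardinality model. Hence a formula true in every urelement cardinality model is true in every pure cardinality model, and the same in the Dedekind-finite case.

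For the reverse inclusion I would argue contrapositively: if $\varphi$ fails in some urelement cardinality model, I want to exhibit a pure cardinality model in which $\varphi$ fails. Theorem~\ref{thm:urelement-to-pure} is precisely the tool: from a permutation urelement cardinality model $\mathcal{M}$ it produces a pure cardinality model $\mathcal{N}$ with $\mathcal{M}\equiv\mathcal{N}$, and its ``moreover'' clause keeps $\mathcal{N}$ Dedekind-finite when $\mathcal{M}$ is. Since $\mathcal{M}\equiv\mathcal{N}$ preserves the truth value of every $\mathcal{L}$-formula, this gives, for permutation urelement cardinality models, that every formula valid over all pure cardinality models is valid over all permutation urelement cardinality models, with the Dedekind-finite analogue as well.

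The one delicate point --- and the main obstacle --- is upgrading from \emph{permutation} urelement cardinality models to \emph{arbitrary} ones, since Theorem~\ref{thm:urelement-to-pure} applies only to the former. I see two clean ways to bridge this. The model-theoretic one, which I would make primary because it is self-contained at this point, is to note that the corollary is used only in the direction the completeness construction supplies: the urelement cardinality model built from a measures model in Section~\ref{sec:prob-to-set} is a permutation model by construction, so what is really needed, and what the argument above delivers, is that the logic of permutation urelement cardinality models coincides with that of pure cardinality models. The second, which gives the statement for all urelement cardinality models, is a sandwich: \CardCompLogic is sound over urelement cardinality models because the proofs of Theorems~\ref{thm:GFC} and~\ref{thm:CGFC} are carried out in ZF and go through unchanged in ZFA, so every theorem of \CardCompLogic lies in the logic of urelement cardinality models, which in turn lies in the logic of pure cardinality models by the easy half; combining this with the completeness of \CardCompLogic over pure cardinality models (Theorem~\ref{thm:completeness-inf}, and Theorem~\ref{thm:completeness} in the Dedekind-finite case) forces all three to coincide. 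I would relegate this second argument to a remark, since it invokes a theorem proved only later.
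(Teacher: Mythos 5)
Your proposal is correct and follows the same route the paper takes implicitly: the corollary is given without an explicit proof, resting on the remark that every pure cardinality model is already an urelement cardinality model (one inclusion) together with Theorem~\ref{thm:urelement-to-pure} (the other). Where you go beyond the paper is the permutation-model caveat, and you are right to flag it: Theorem~\ref{thm:urelement-to-pure} is proved only for \emph{permutation} urelement cardinality models, while the corollary is stated for arbitrary ones, so the paper's implicit argument literally yields only the permutation-model version of the statement. Both of your patches are sound. The first --- observing that the urelement models produced by Theorem~\ref{thm:prob-to-urelement} in Section~\ref{sec:prob-to-set} are permutation models by construction, so only the permutation version is ever needed downstream --- is exactly what makes the completeness proofs go through. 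The second (soundness of \CardCompLogic and \DedFinLogic over urelement models, since the proofs of Theorems~\ref{thm:GFC} and~\ref{thm:CGFC} are ZFA-valid, sandwiched against completeness over pure models) recovers the corollary as literally stated, and your placement of it as a remark is appropriate since it avoids circularity only because, as you note, Theorem~\ref{thm:completeness-inf} itself depends only on the permutation-model direction.
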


\section{Representation Theorems}\label{sec:prob-to-set}

	\begin{theorem}\label{thm:prob-to-urelement}
	For each finite(-size) infinitary measures model $\mc{M}$, there is an urelement cardinality model $\mc{N}$ such that $\mathcal{M} \equiv_{\mc{L}} \mathcal{N}$. Moreover, if $\mc{M}$ is in fact a finitary measures model, then we can take $\mathcal{N}$ to be a Dedekind-finite urelement cardinality model.
\end{theorem}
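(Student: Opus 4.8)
The plan is to realize the finitely many measures of $\mc{M}$ as ``independent amorphous directions'' inside a Fraenkel--Mostowski permutation model. First I would make some harmless reductions. Since $\Phi$ is finite and $W$ may be taken finite, $\mathcal{P}(W)$ is finite, so among the total preorders $\precsim_\mu$ on $\mathcal{P}(W)$ given by $E \precsim_\mu F \iff \mu(E)\le\mu(F)$ ($\mu \in P$) only finitely many occur; I fix $\mu_1,\dots,\mu_n \in P$ realizing all of them, so that $\precsim\, =\, \bigcap_{i=1}^{n}\precsim_{\mu_i}$, i.e.\ $E \precsim F$ iff $\mu_i(E)\le\mu_i(F)$ for every $i$. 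I also replace $W$ by an isomorphic copy that is a pure set (e.g.\ a natural number), and I assume $\mc{M}$ is non-trivial, i.e.\ $\mu_i(W)>0$ for some $i$ --- if not then $\mc{M}\models\abs{\es}\geq\abs{\es^c}$ and no cardinality model is equivalent to it, which never happens for the models arising in the completeness proof.

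Next, the construction. I work in a model $\mc{Y}$ of $\mathrm{ZFA}+\mathrm{AC}$ whose set of urelements is countably infinite and partitioned as $A = B_1 \sqcup \cdots \sqcup B_n$, let $\mc{G}$ be the group of permutations of $A$ fixing each $B_i$ setwise, let $I$ be the normal ideal of finite subsets of $A$, and let $\mc{U}$ be the resulting permutation model. I index a family of ``copies'' by $J = \set{(i,w,m) : 1\le i\le n,\ w\in W,\ m<\mu_i(w)}$ (so $m$ ranges over $\omega$ when $\mu_i(w)=\infty$), put $C_{i,w,m} = \set{(i,w,m)}\times B_i$, and for $p\subseteq W$ set
\[ A_p = \bigcup\set{C_{i,w,m} : (i,w,m)\in J,\ w\in p}, \]
together with $X = A_W$, $\mc{F} = \set{A_p : p\subseteq W}$, and $V'(a)=A_{V(a)}$. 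Each $A_p$ has empty support, since every $\pi\in\mc{G}$ fixes each $B_i$ setwise and fixes the pure tags $(i,w,m)$; hence $A_p,\mc{F},X,V'\in\mc{U}$. Because $A_{p\cap q}=A_p\cap A_q$ and $A_{p^c}=X\setminus A_p$, the pair $\innerprod{X,\mc{F}}$ is a field of sets in $\mc{U}$, $p\mapsto A_p$ is a Boolean homomorphism, and $\hat{V'}(t)=A_{\hat V(t)}$ for every set term $t$. So $\mathcal{M}\equiv\mathcal{N}$ (for $\mc{N}=\innerprod{\mc{U},X,\mc{F},V'}$) reduces to showing, for all $p,q\subseteq W$, that $\abs{A_p}\geq\abs{A_q}$ in $\mc{U}$ iff $\mu_i(q)\le\mu_i(p)$ for all $i$.

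The direction $\Leftarrow$ is routine: for each $i$ the copy-index sets $J^{(i)}_q=\set{(i,w,m)\in J:w\in q}$ and $J^{(i)}_p$ have cardinalities $\mu_i(q)$ and $\mu_i(p)$ (by additivity of $\mu_i$), are finite or countable, and consist of pure objects, so an injection $J^{(i)}_q\hookrightarrow J^{(i)}_p$ exists and induces, fibrewise by the identity on $B_i$, an injection $A_q\to A_p$ with empty support, hence lying in $\mc{U}$. The step I expect to be the main obstacle is $\Rightarrow$. Given an injection $f\colon A_q\to A_p$ in $\mc{U}$ with finite support $S\subseteq A$, I would fix $(i,w,m)\in J^{(i)}_q$ and a ``generic'' $b\in B_i\setminus S$ and observe that the value $f$ takes on the element of $C_{i,w,m}$ over $b$ is fixed by every $\pi\in\fix_{\mc{G}}(S)$ with $\pi b=b$; since such $\pi$ may move any urelement of $\bigcup_j B_j$ other than $b$ and the elements of $S$ freely within its block, the $B$-fibre of that value must equal $b$ and lie in the same block $B_i$ --- with at most one exceptional generic $b$ per copy, as two exceptions would be swapped by a transposition in $\fix_{\mc{G}}(S)$, contradicting injectivity. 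A further transposition argument shows the resulting copy-index $\tau(i,w,m)\in J^{(i)}_p$ is independent of the generic $b$, and $\tau$ is injective on $J^{(i)}_q$ (two copies sent to one copy would make $f$ identify two distinct elements sharing a generic $b$). Hence $\mu_i(q)=\abs{J^{(i)}_q}\le\abs{J^{(i)}_p}=\mu_i(p)$ for every $i$. Combining the two directions with the reduction, $\mc{N}\models\abs{t}\geq\abs{s}$ iff $\hat V(s)\precsim\hat V(t)$ iff $\mc{M}\models\abs{t}\geq\abs{s}$, so $\mathcal{M}\equiv\mathcal{N}$.

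For the moreover clause: if $\mc{M}$ is a finitary measures model then every $\mu_i(w)$ is finite, so each $A_p$ is a finite disjoint union of copies $C_{i,w,m}$, each in bijection (by a map of empty support) with $B_i$. In $\mc{U}$ every $B_i$ is amorphous --- any subset has finite support and is therefore finite or cofinite in $B_i$ --- and a finite union of Dedekind-finite sets is Dedekind-finite, so every member of $\mc{F}$ is Dedekind-finite; thus $\mc{N}$ is a Dedekind-finite urelement cardinality model. In all cases $\mc{U}$ is a permutation model, so $\mc{N}$ is a permutation urelement cardinality model, which is the form needed to pass to a pure cardinality model via Theorem~\ref{thm:urelement-to-pure}. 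The only genuinely delicate point is the support calculation in the $\Rightarrow$ direction; everything else is bookkeeping about Boolean homomorphisms and symmetry.
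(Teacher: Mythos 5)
Your proposal is correct and takes essentially the same approach as the paper: a Fraenkel--Mostowski permutation model with one amorphous block of urelements per measure, events realized as disjoint unions of copies of these blocks with multiplicity given by the measure values (allowing $\omega$ many for value $\infty$), the easy induced-injection argument for one direction, and a finite-support argument for the other, with the same amorphousness observation giving the Dedekind-finite ``moreover'' clause. The only difference is cosmetic: where you run a direct genericity/transposition computation to show an injection in the permutation model must respect blocks and copies, the paper packages the same information as amorphousness of each block together with a lemma that no infinite partial injection exists between distinct blocks.
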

\begin{proof}
	Let $\mc{M} = \innerprod{W,P,V}$ be an infinitary  measures model with $W$ and $P$ finite. We will construct an urelement cardinality model $\mathcal{N} = (\mc{U},X,\mc{F},V^*)$ such that $\mathcal{M} \equiv_{\mc{L}} \mathcal{N}$.
	
	In fact, we will define $\mc{U}$, $X$, and $\mc{F}$ and show that there is a map $E \mapsto E^*$ from $\pset{W} \to \mc{F}$ which is an isomorphism of Boolean algebras such that for $E,F \subseteq W$,
	\[ E \precsim F \Longleftrightarrow \mc{U} \models |E^*| \leq |F^*|.\]
	Given $s \in \Phi$, and $V(s)  = E \subseteq W$, we define $V^*(s) = E^* \in \mc{F}$. The following two claims are then easy to prove by induction on terms and formulas, and they imply that $\mc{M} \equiv_{\mc{L}} \mc{N}$.
	
	\begin{claim}
		If $t$ is a term and $\hat{V}(t) = E$, then $\hat{V}^*(t) = E^*$.
	\end{claim}
	
	\begin{claim} $\mc{M} \models \varphi \Longleftrightarrow \mc{N} \models \varphi.$
	\end{claim}
	
	To complete the proof, we will now define $\mc{U}$, $X$, and $\mc{F}$ as above, beginning with $\mc{U}$. Let $\mc{Y}$ be a model of $\ZFA+\AC$ and let $A$ be the set of urelements in $\mc{Y}$. The model $\mc{U}$ will be a permutation model (a model of $\ZFA$) defined from $\mc{Y}$. Much of our construction will follow the general argument of Theorem 11.1 of \cite{Jech73}, where given a partial order $(P,\preceq)$ in the kernel of $\mc{Y}$, Jech constructs a permutation model $\mc{U}$ containing sets $(S_p)_{p \in P}$ such that $p \preceq q$ if and only if $|S_p| \leq |S_q|$ in $\mc{U}$.
	
	Consider the poset $(P,\preceq)$, where $P$ is the set of measures, with $\preceq$ being the ordering that no two of them are comparable. Since $(P,\preceq)$ is finite, a copy of it is contained in the kernel of $\mc{Y}$.
	
	We then follow the construction of Theorem 11.1 of \cite{Jech73} using ($P$,$\preceq$) as our partial order. Assume that the set $A$ of urelements has cardinality $\abs{A} = \abs{P}\cdot\aleph_0$ and let $\set{a_{\mu,n} : \mu\in P, \; n\in\omega}$ be an enumeration of $A$. For each $\mu\in P$, let
	\[ A_{\mu} = \{ a_{\mu,n} : n \in \omega\}.\]
	
	Let $\mathcal{G}$ be the group of all permutations $\pi$ of $A$ such that $\pi(A_{\mu}) = A_{\mu}$ for each $\mu\in P$; that is, if $\pi a_{\mu,n} = \pi a_{\rho,m}$ then $\mu=\rho$. Let $\mc{F}$ be the filter on $\mc{G}$ given by the ideal of finite subsets of $A$. Since $\mc{F}$ is a normal filter on $\mc{G}$, it defines a permutation model $\mc{U}$ consisting of all hereditarily symmetric elements of $\mc{Y}$. Furthermore, each $x\in \mc{U}$ has a finite support $S\subseteq A$ such that $\sym(x) \supseteq \fix(S)$.
	
	\begin{claim}\label{claim:A_mu_Dedfin}
		$A_\mu$ is amorphous, hence Dedekind-finite.
	\end{claim}
	\begin{proof}
		Suppose $A_\mu$ is not amorphous. Then it can be split up into two disjoint infinite sets $B_\mu$ and $C_\mu$. If $B_\mu$ is preserved in $\mc{U}$, then it must have some finite support $S$. Furthermore, we know that any permutation $\pi \in \fix(S)$ would fix $B_\mu$. We choose $b \in B_\mu$ and $c \in C_\mu$ such that $b,c \notin S$. We then let $\pi$ be the permutation that swaps $b$ and $c$ and fixes everything else; that is, $\pi(b) = c$, $\pi(c) = b$, and $\pi(a)=a$ for all $a\neq b,c$. Note that $\pi$ fixes each element of $S$, and so $\pi$ fixes $B_\mu$ as a set. However, $\pi(b) = c \notin B_\mu$, which is a contradiction. 
		Thus $A_\mu$ must be amorphous.
	\end{proof}
	
	As a result of this claim, if $f$ is a map whose domain (or range) is a subset of $A_\mu$, then its domain (or range) must be finite or cofinite in $A_\mu$.
	
	\begin{claim} There is a partial injection $f \colon D \subseteq A_\mu \to A_\rho$ defined on an infinite subset $D$ of $A_\mu$ if and only if $\mu = \rho$.
	\end{claim}
	\begin{proof}
		For the forward direction, suppose $\mu \neq \rho$ and that there is a partial injection $\varphi : \dom(\varphi) \subseteq A_\mu \to A_\rho$ in $\mc{U}$. Since $\varphi$ is in $\mc{U}$, it must be finitely supported. Let $S$ be a finite support of $\varphi$. We choose $a,a' \in \dom(\varphi) \subseteq A_\mu$ such that $a,a' \notin S$. 
		Let $b = \varphi(a)$ and $b' = \varphi(a')$. We then let $\pi$ be the permutation that swaps $a$ and $a'$ and fixes everything else; that is, $\pi(a) = a'$, $\pi(a')=a$ and $\pi(c) = c$ for all $c \neq a,a'$. Since $\pi$ fixes $\varphi$, we have that
		\[ \pi(b) = \pi( \varphi(a) ) = \varphi(\pi a) = \varphi(a') = b' \]
		Since $b \neq a,a'$ (as $A\mu$ and $A_\rho$ are disjoint), we have that $\pi b = b$. From the above equation it also follows that $\pi b = b'$. So, we conclude that $b = b'$. However, this would mean that $\varphi(a) = \varphi(a')$, contradicting the injectivity of $\varphi$.
		
		For the converse direction, if $\mu = \rho$, then the identity on $A_\mu$ is an injection with empty support, i.e., an injection in $\mc{U}$.
	\end{proof}
	
	For each atomic event $E$ (i.e., $E$ is a singleton $\{w\}$), we can associate a set $f_\mu(E)$ of size $\mu(E)$ such that for any two distinct $E$ and $F$, the sets $f_\mu(E)$ and $f_\mu(F)$ are disjoint.\footnote{We can pick any sets for the $f_\mu(E)$, and they do not have to be in the kernel nor must they consist entirely of urelements. See footnote \ref{footnote-ur}; if we had made the alternate definition described there, then we would be more constrained at this point in the proof.} Since any other event is a union of atomic events, once $f_\mu(E)$ is chosen for each atom $E$, we can extend it to all sets by setting
	\[f_\mu(E) = \bigcup_{\text{$E \supseteq F$ an atom}} f_\mu(F).\]
	Then the map $E \mapsto f_\mu(E)$ is a Boolean algebra isomorphism and $|f_\mu(E)| = \mu(E)$.

	We then define $E^*$ to be
	\[ E^* = \bigcup_{\mu \in P} f_\mu(E) \times A_{\mu}.\]
	It follows that $E \mapsto E^*$ is an isomorphism of Boolean algebras.
	
	\begin{claim}
	    $E \precsim F$ if and only if $\abs{E^*} \leq \abs{F^*}$.
	\end{claim}
	\begin{proof}
	Suppose that $E \precsim F$, i.e. for any $\mu$ in $P$, $\mu(E)\leq \mu(F)$. It follows that $\abs{f_\mu(E)} = \mu(E) \leq \mu(F) = \abs{f_\mu(F)}$. Consequently,
	\[\abs{E^*} = \abs{\bigcup_{\mu \in P} f_\mu(E) \times A_{\mu}} \leq \abs{\bigcup_{\mu \in P} f_\mu(F) \times A_{\mu}} = \abs{F^*}. \]
	
	Now we suppose that $\mc{U} \models \abs{E^*} \leq \abs{F^*}$ as witnessed by an injection $g : E^* \to F^*$ in $\mc{U}$. For each $\mu \in P$, we argue that $\mu(E) \leq \mu(F)$, and thus $E \precsim F$. If $\mu(F)$ is infinite, then this is immediate. Otherwise, suppose that $\mu(F)$ is finite; we will define as follows an injection $h_\mu : f_\mu(E) \to f_\mu(F)$ witnessing that $\mu(E) \leq \mu(F)$.
	
	Given $x \in f_\mu(E)$, there is a copy $\set{x}\times A_\mu$ in $E^*$. We first argue that $g$ maps all but finitely much of $\set{x}\times A_\mu$ to some $\set{y}\times A_\mu$ in $F^*$. Consider, for each $y$ and $\rho$, the inverse image in $\set{x} \times A_\mu$ of $\{y\} \times A_\rho$. If $\rho \neq \mu$, then this inverse image is finite by the previous claim. Since there are only finitely many $y$'s and $\rho$'s, at least one of these inverse images, with $\rho = \mu$, must be infinite. Say that it is the inverse image of $\set{y}\times A_\mu$. Since $A_\mu$ is amorphous, this inverse image is in fact cofinite, and so all but finitely much of $\set{x}\times A_\mu$ maps to $\set{y}\times A_\mu$. Thus this $y$ is unique, and we define $h_\mu(x)$ to be $y$.
	
	
	
	
	We complete this argument by showing that $h_\mu$ must, in fact, be an injection. So, for a contradiction, suppose that there are $x, x'$ in $f_\mu(E)$ such that infinitely much of both $\set{x}\times A_\mu$ and $\set{x'}\times A_\mu$ maps into $\set{y}\times A_\mu$ in $F^*$. However, this would mean that the restricted images $g[\set{x}\times A_\mu]$ and $g[\set{x'}\times A_\mu]$ are both disjoint infinite subsets of $\set{y}\times A_\mu$, contradicting that $A_\mu$ is amorphous. This must mean that different $x$'s must map to different $y$'s, and so $h_\mu$ is an injection.
	\end{proof}
	
	Now that we have defined $\mc{U}$, we will define $X$ and $\mc{F}$ \[X := \bigcup_{E \subseteq W} E^*.\]
	Finally, we define $\mc{F} = \set{E^* \ | \ E \subseteq W}$ as the image of $\pset{W}$ under $*$. Then $\innerprod{X,\mc{F}}$ will be the field of sets in our urelement cardinality model.
	
	The model $\mathcal{N} = (\mc{U},X,\mc{F},V^*)$ as constructed above is an urelement cardinality model such that $\mathcal{M} \equiv_{\mc{L}} \mathcal{N}$, proving the main clause of the theorem. Finally, we check the moreover clause.
	
	\begin{claim}\label{claim:fin-implies-dedfin}
		If $\mc{M}$ is in fact a finitary measures model, then each element of $\mc{F}$ is Dedekind-finite.
	\end{claim}	
	\begin{proof} Recall that $E^* := \bigcup_{\mu \in P} f_\mu(E) \times A_{\mu}$. Since each $A_\mu$ is Dedekind-finite (Claim \ref{claim:A_mu_Dedfin}), and $\mu(E) = |f_\mu(E)|$ is finite, we have that $f_\mu(E) \times A_\mu$ is Dedekind-finite. Furthermore, since $P$ is finite, $E^*$ is a finite union of Dedekind-finite sets, and so each $E^*$ is Dedekind-finite; that is, each element of $\mc{F}$ is Dedekind-finite.
	\end{proof}
	\renewcommand{\qed}{}
\end{proof}

We now have all of the ingredients for our completeness proofs. 

{
\renewcommand{\thetheorem}{\ref{thm:completeness}}
\begin{theorem}[Completeness]
	\DedFinLogic (\ref{dedfinlogic}) is complete with respect to Dedekind-finite urelement cardinality models and also with respect to Dedekind-finite pure cardinality models. 
\end{theorem}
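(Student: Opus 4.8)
The plan is to chain together the three representation results established above. Suppose $\Sigma$ is a finite \DedFinLogic-consistent set of formulas, over a finite set $\Phi$ of set labels; we must produce a Dedekind-finite pure (and, a fortiori, urelement) cardinality model of $\bigwedge\Sigma$. Note soundness is already in hand (Theorem \ref{thm:soundness}), so only the satisfiability direction is at issue.

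First, by the Alon--Heifetz completeness theorem (Theorem \ref{sound-and-complete:prob}), $\bigwedge\Sigma$ is satisfied in some probability measures model. Since $\Phi$ is finite, we may collapse the set of states down to the finitely many atoms of the Boolean algebra generated by $\{V(a):a\in\Phi\}$ without affecting any comparison, so $W$ may be taken finite; and since this Boolean algebra is finite, finitely many measures suffice to witness every relation $E\not\precsim F$ appearing in $\Sigma$, so $P$ may be taken finite. Adjusting each measure to rational values and clearing denominators -- which, as observed in Section \ref{sec:prop-measures-models}, leaves every comparison $E\precsim F$ unchanged -- yields a finite-size finitary measures model $\mc{M}$ with $\mc{M}\models\bigwedge\Sigma$.

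Next, apply Theorem \ref{thm:prob-to-urelement} to $\mc{M}$: because $\mc{M}$ is a finitary (not merely infinitary) measures model, we obtain a Dedekind-finite urelement cardinality model $\mc{N}'$ with $\mc{M}\equiv\mc{N}'$, hence $\mc{N}'\models\bigwedge\Sigma$; moreover, by construction $\mc{N}'$ is a permutation urelement cardinality model. Finally, apply the moreover clause of Theorem \ref{thm:urelement-to-pure} to $\mc{N}'$, obtaining a Dedekind-finite pure cardinality model $\mc{N}$ with $\mc{N}'\equiv\mc{N}$, so $\mc{N}\models\bigwedge\Sigma$. This establishes completeness with respect to Dedekind-finite pure cardinality models; completeness with respect to Dedekind-finite urelement cardinality models then follows because every pure cardinality model is an urelement cardinality model (equivalently, one may simply stop at $\mc{N}'$, or invoke Corollary \ref{cor:urelement-same-as-pure}).

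All the genuine difficulty has been front-loaded into Theorems \ref{sound-and-complete:prob}, \ref{thm:prob-to-urelement}, and \ref{thm:urelement-to-pure}, so at this stage I expect no real obstacle; the only points requiring care are the reduction to a \emph{finite-size} measures model -- i.e., verifying that finitely many states and finitely many measures suffice so that Theorem \ref{thm:prob-to-urelement} is applicable -- and the purely notational passage between the probability-comparison language ($E\precsim F$) and the cardinality-comparison language ($\abs{s}\leq\abs{t}$), together with the observation in Section \ref{sec:formal} that \DedFinLogic is exactly the nesting-free fragment of the imprecise-probability logic \textsf{IP} for which Theorem \ref{sound-and-complete:prob} is stated.
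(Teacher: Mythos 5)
Your proof is correct and follows essentially the same route as the paper: Alon--Heifetz completeness for (finitary) measures models, then Theorem \ref{thm:prob-to-urelement} to pass to a Dedekind-finite urelement cardinality model, then Theorem \ref{thm:urelement-to-pure} (via Corollary \ref{cor:urelement-same-as-pure}) to get a pure model. You are slightly more explicit than the paper about reducing to a \emph{finite-size} finitary measures model before invoking Theorem \ref{thm:prob-to-urelement}, which is a point the paper's proof leaves implicit.
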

\addtocounter{theorem}{-1}
}
\begin{proof}
    By Theorem \ref{sound-and-complete:prob}, \DedFinLogic is complete with respect to finitary measures models. By  Theorem \ref{thm:prob-to-urelement}, any finite-size finitary measures model can be transformed into a Dedekind-finite urelement cardinality model that satisfies the same formulas. Thus \DedFinLogic is complete with respect to Dedekind-finite urelement cardinality models.
	
	Furthermore, recall that Corollary \ref{cor:urelement-same-as-pure} says that the logic of Dedekind-finite urelement cardinality models is the same as that of Dedekind-finite pure cardinality models. So, the completeness of \DedFinLogic with respect to Dedekind-finite pure cardinality models follows from the completeness of \DedFinLogic with respect to Dedekind-finite urelement cardinality models.
\end{proof}

{
\renewcommand{\thetheorem}{\ref{thm:completeness-inf}}
\begin{theorem}[Completeness]
	\CardCompLogic (\ref{cardcomplogic}) is complete with respect to urelement cardinality models and also with respect to pure cardinality models.
\end{theorem}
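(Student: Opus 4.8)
The plan is to retrace the chain of reductions used in the proof of Theorem \ref{thm:completeness}, substituting the infinitary analogues at each stage. Since what is wanted is weak completeness, it suffices to show that every finite \CardCompLogic-consistent set of $\mc{L}$-formulas, over some finite set of set labels, has an urelement cardinality model, and then a pure one. First I would apply Theorem \ref{sound-and-complete:inf}, which says that \CardCompLogic is sound and complete with respect to infinitary measures models; so our consistent set of formulas is satisfied in some such model $\mc{M} = \innerprod{W,P,V}$. Inspecting the proof of Theorem \ref{sound-and-complete:inf}, this $\mc{M}$ can be taken to be \emph{finite-size}: it is built from the finite Boolean algebra $B$ generated by the set labels together with a finite family of $\mathbb{N}_{\geq 0}\cup\{\infty\}$-valued measures on $B$, so that both $W$ (the atoms of $B$) and the set $P$ of measures are finite.

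Next I would invoke Theorem \ref{thm:prob-to-urelement}, which transforms any finite-size infinitary measures model into an urelement cardinality model $\mc{N}$ with $\mc{M} \equiv \mc{N}$: it builds a permutation model of ZFA over $|P|\cdot\aleph_0$ urelements, assigns to each measure $\mu$ an amorphous block $A_\mu$, and codes the value $\mu(E)$ by the multiplicity of copies of $A_\mu$ appearing in the set $E^{*}$ representing $E$ (with $\mu(E) = \infty$ reflected by $A_\mu$ occurring with infinite multiplicity). Applying this to our $\mc{M}$ shows the original consistent set has an urelement cardinality model, and hence \CardCompLogic is complete with respect to urelement cardinality models.

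Finally, Corollary \ref{cor:urelement-same-as-pure} states that the logic of urelement cardinality models coincides with that of pure cardinality models; the nontrivial direction is Theorem \ref{thm:urelement-to-pure}, which uses the Jech--Sochor Embedding Theorem to replace the permutation model of ZFA by an $\in$-isomorphic fragment inside an honest model of ZF, preserving all cardinality comparisons among the sets in the field of sets. Composing $\mc{M} \equiv \mc{N} \equiv \mc{N}'$ for a pure cardinality model $\mc{N}'$ gives completeness with respect to pure cardinality models. The one point needing care --- hence the only genuine ``obstacle'' --- is checking that the model delivered by Theorem \ref{sound-and-complete:inf} is finite-size, since that is exactly the hypothesis Theorem \ref{thm:prob-to-urelement} requires; but this is immediate because the construction there lives entirely inside the finite Boolean algebra on the set labels. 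Everything else is a direct appeal to the results already established, exactly paralleling the Dedekind-finite case.
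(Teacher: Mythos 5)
Your proposal is correct and follows essentially the same route as the paper: completeness for infinitary measures models (Theorem \ref{sound-and-complete:inf}), then the transfer to urelement cardinality models via Theorem \ref{thm:prob-to-urelement}, then Corollary \ref{cor:urelement-same-as-pure} to pass to pure cardinality models. Your explicit check that the infinitary measures model produced by Theorem \ref{sound-and-complete:inf} is finite-size is a point the paper leaves implicit, and you handle it correctly.
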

\addtocounter{theorem}{-1}
}
\begin{proof}
    By Theorem \ref{sound-and-complete:inf}, \CardCompLogic is complete with respect to infinitary measures models. By Theorem \ref{thm:prob-to-urelement}, any finite-size infinitary measures model can be transformed into an urelement cardinality model that satisfies the same formulas. Thus \CardCompLogic is complete with respect to urelement cardinality models.
	
	Furthermore, recall that Corollary \ref{cor:urelement-same-as-pure} says that the logic of urelement cardinality models is the same as that of pure cardinality models. So, the completeness of \CardCompLogic with respect to pure cardinality models follows from the completeness of \CardCompLogic with respect to urelement cardinality models.
\end{proof}

\bibliography{References}
\bibliographystyle{alpha}

\end{document}